\newtheorem{thm}{Theorem}[section] 
\newtheorem{lem}[thm]{Lemma} 
\newtheorem{cor}[thm]{Corollary} 
\newtheorem{prop}[thm]{Proposition} 
\newtheorem{defn}[thm]{Definition} 
\newtheorem{rem}[thm]{Remark} 
\newtheorem{ass}[thm]{Assumption} 
\newtheorem{ex}[thm]{Example} 
\newtheorem{conj}[thm]{Conjecture} 
\renewcommand{\le}{\leqslant} 
\renewcommand{\ge}{\geqslant} 
\newcommand{\T}{\intercal} 
\newcommand{\half}{\ensuremath{\sfrac12}} 
\newcommand{\ra}{\rangle} 
\newcommand{\la}{\langle}
\newcommand{\ind}{\mathds{1}} 
\newcommand{\eps}{\varepsilon} 
\newcommand{\norm}[1]{\Vert#1\Vert} 
\newcommand{\abs}[1]{\left\vert#1\right\vert}
\newcommand{\ie}{\emph{i.e.,}} 
\newcommand{\eg}{\emph{e.g.,}}
\def\qed{ \hfill $\blacksquare$} 
\let\ga=\alpha \let\gb=\beta \let\gc=\gamma \let\gd=\delta   \let\gh=\eta  \let\gk=\kappa \let\gl=\lambda  \let\gn=\nu   \let\gr=\rho \let\gs=\sigma \let\gt=\tau    \let\gz=\zeta \let\gC=\Gamma \let\gD=\Delta  \let\gL=\Lambda     \let\gS=\Sigma    
\newcommand{\cC}{\mathcal{C}}
\newcommand{\cP}{\mathcal{P}} 
\newcommand{\cQ}{\mathcal{Q}}
\newcommand{\cV}{\mathcal{V}}
\newcommand{\vone}{\mathbf{1}}
\newcommand{\vQ}{\mathbf{Q}} 
\newcommand{\vR}{\mathbf{R}}
\newcommand{\vX}{\mathbf{X}}
\newcommand{\ve}{\mathbf{e}}
\newcommand{\vp}{\mathbf{p}} 
\newcommand{\vq}{\mathbf{q}}
\newcommand{\vu}{\mathbf{u}} 
\newcommand{\vv}{\mathbf{v}} 
\newcommand{\vx}{\mathbf{x}} 
\newcommand{\vy}{\mathbf{y}}
\newcommand{\mvu}{\boldsymbol{u}} 
\newcommand{\mvv}{\boldsymbol{v}} 
\newcommand{\mvx}{\boldsymbol{x}}
\newcommand{\mvgr}{\boldsymbol{\rho}} 
\newcommand{\mvgs}{\boldsymbol{\sigma}} 
\newcommand{\mvgt}{\boldsymbol{\tau}}
\newcommand{\mvgz}{\boldsymbol{\zeta}}
\newcommand{\mveta}{\boldsymbol{\eta}}
\newcommand{\mvrho}{\boldsymbol{\rho}}
\newcommand{\mvxi}{\boldsymbol{\xi}}
\newcommand{\fR}{\mathfrak{R}} 
\newcommand{\mfR}{\boldsymbol{\mathfrak{R}}}
\newcommand{\bN}{\mathbb{N}}
\newcommand{\bR}{\mathbb{R}}
\newcommand{\dR}{\mathds{R}}
\newcommand{\sP}{\mathscr{P}}
\DeclareMathOperator{\E}{\mathds{E}} 
\DeclareMathOperator{\pr}{\mathds{P}} 
\DeclareMathOperator{\var}{Var}
\DeclareMathOperator{\tr}{Tr} 
\DeclareMathOperator{\sech}{sech} 
\DeclareMathOperator{\diag}{diag} 
\DeclareMathOperator{\N}{N} 
\DeclareMathOperator{\RS}{RS}
\DeclareMathOperator{\sym}{sym}
\DeclareMathOperator{\vect}{vec}
\newcommand{\ovR}{\overline{\vR}} 
\newcommand{\oR}{\overline{R}} 
\newcommand{\ov}{\overline} 
\newcommand{\indf}{\emph{indefinite}}
\newcommand{\pd}{\emph{positive-definite}}
\begin{document}

\title[Replica Symmetric MSK Model]{Fluctuation Results for Multi-species Sherrington-Kirkpatrick model in the Replica Symmetric Regime}
\author[Dey]{Partha S.~Dey$^\star$}
\author[Wu]{Qiang Wu$^\dagger$}
\address{Department of Mathematics, University of Illinois at Urbana-Champaign, 1409 W Green Street, Urbana, Illinois 61801}
\email{$^\star$psdey@illinois.edu, $^\dagger$qiangwu2@illinois.edu}
\date{\today}
\subjclass[2010]{Primary: 82B26, 82B44, 60F05.}
\keywords{Spin glass, Phase diagram, Central limit theorem, Cavity method.}

\begin{abstract}
	We study the Replica Symmetric region of general multi-species Sherrington-Kirkpatrick (MSK) Model and answer some of the questions raised in Ann.~Probab.~43~(2015), no.~6, 3494--3513, where the author proved the Parisi formula under \emph{positive-definite} assumption on the disorder covariance matrix $\Delta^2$.
	First, we prove exponential overlap concentration at high temperature for both \indf~and \emph{positive-definite} $\Delta^2$ MSK model. We also prove a central limit theorem for the free energy using overlap concentration. Furthermore, in the zero external field case, we use a quadratic coupling argument to prove overlap concentration up to $\beta_c$, which is expected to be the critical inverse temperature. The argument holds for both \emph{positive-definite} and emph{indefinite} $\Delta^2$, and $\beta_c$ has the same expression in two different cases.
	Second, we develop a species-wise cavity approach to study the overlap fluctuation, and the asymptotic variance-covariance matrix of overlap is obtained as the solution to a matrix-valued linear system. The asymptotic variance also suggests the de Almeida--Thouless (AT) line condition from the Replica Symmetry (RS) side. Our species-wise cavity approach does not require the positive-definiteness of $\Delta^2$. However, it seems that the AT line conditions in \emph{positive-definite} and \emph{indefinite} cases are different.
	Finally, in the case of \emph{positive-definite} $\Delta^2$, we prove that above the AT line, the MSK model is in Replica Symmetry Breaking phase under some natural assumption. This generalizes the results of J.~Stat.~Phys.~174 (2019), no.~2, 333--350, from 2-species to general species.
\end{abstract}

\maketitle

\section{Introduction and Main Results}\label{sec:intro}


Multi-species Sherrington-Kirkpatrick (MSK) model, as a generalization of the classical SK model~\cites{Tal11a,Tal11b,Pan13}, was introduced by Barra~\emph{et.~al.}~in~\cite{BCMT15}. In the MSK model, the spins are divided into finitely many species, and the density of each species is asymptotically fixed with increasing system volume. The interaction parameter between different spins depends only on the species structure. Thus, one can consider the MSK model as an inhomogeneous generalization of the classical SK model. The detailed definition is given in Section~\ref{s1sec:msk}.

One important question in spin glasses is to understand the behavior of limiting free energy. In the classical SK model, Parisi~\cites{Par83, Par80} proposed his celebrated variational formula, known as the Parisi formula, which gives the limiting free energy at all temperatures. Talagrand~\cite{Tal06} proved this in the setting of mixed $p$-spin models for even $p$. Later Panchenko~\cite{Pan14}, using a different approach, proved the Parisi formula of mixed $p$-spin models for all $p$.

The authors in~\cite{BCMT15} proposed a Parisi type formula for the limiting free energy in the MSK model. They also proved the upper bound by adapting Guerra's interpolation method under \pd~condition on the interaction matrix $\gD^2$. Later, Panchenko~\cite{Pan15} completed the proof by proving a multi-species version of the Ghirlanda-Guerra identities to match the lower bound. The \pd~condition on $\gD^{2}$ was used only in the proof of the upper bound. In~\cite{Pan15}, Panchenko also proposed several open questions, such as understanding behaviors of the model when the interaction matrix $\gD^2$ is \emph{indefinite}, deriving an analog of the AT line condition in MSK model, among others. The authors in~\cite{BSS19} derived the AT line condition of two-species \pd~case and proved that above the AT line, the 2-species SK model is RSB. The essential tool of their argument is a perturbation of the Parisi formula, which is only known for \pd~$\gD^2$. The idea fails in the general species case because of algebraic difficulties.

For the interesting question about what happens when $\gD^2$ is \indf, to the best of our knowledge, only some partial results on a few particular models have been known very recently. In~\cites{BGG11, BGGPT14}, there are some conjectured min-max type formulas by physicists for the bipartite SK model, which is a special case of \indf~MSK model. Another particular case of \indf~MSK model known as deep Boltzmann machine (DBM) was investigated in \cites{ABCM20, ACCM21} in the replica symmetry regime, and a complete solution was obtained in \cite{ACCM20} under assumptions on the Nishimori line. Besides that, a min-max formula for the replica symmetric solution in the DBM model is proved in~\cite{Gen20}. Recently, Mourrat~\cites{MD20, Mou20A, Mou20B, Mou20C, Mou18, Mou19} has reinterpreted the Parisi formula as the solution to an infinite-dimensional Hamilton-Jacobi equation in the Wasserstein space of probability measures on the positive half-line. Particularly in~\cite{Mou20B}, he studied the bipartite SK model and questioned the possibility to represent the limiting free energy as a variational formula. In the setting of the Hamilton-Jacobi equation, non-convex $\gD^2$ breaks down the application of the Hopf-Lax formula while solving the PDE. For the spherical case, Auffinger and Chen~\cite{AC14} studied the bipartite spherical SK (BSSK) model and proved a variational formula for the limiting free energy at sufficiently high temperature. Later, Baik and Lee~\cite{BL17} computed the limiting free energy and its fluctuations at all non-critical temperatures in the BSSK model using tools from random matrix theory. However, computing limiting free energy for general \indf~model is still unknown. Specifically for Ising spins, we do not even know the limiting free energy at any temperature, other than the lower bound by Panchenko~\cite{Pan15}.

In this paper, we do a high-temperature analysis of the Ising MSK model, and our main contributions are:
\begin{enumerate}
	\item For external field $h \ge 0$, by extending Latala's argument, we prove an RS regime in the MSK model, where the overlap has exponential concentration. Note that our approach is unified for both \pd~and \indf~MSK model, but the proved RS regimes have a different form; see Theorem~\ref{thm1}. For $h=0$, by using a quadratic coupling argument, we prove the concentration of overlap up to $\gb_c=\rho(\gD^2\gL)^{-\half}$, where $\gD^{2}$ is the interaction matrix and $\gL$ is the diagonal matrix containing the species size ratios. This result is true for \indf~and \pd~$\gD^2$, \ie~we prove the whole RS regime for the general MSK model, which is given as $\gb < \gb_c$, see Theorem~\ref{ovp no h}. The above overlap concentrations also enable us to prove the limit and central limit theorem of free energy in the corresponding RS regime; see Theorem~\ref{RS solution}.

	\item By developing a different species-wise cavity method, we derive a linear system of Gibbs average of a quadratic form of overlap vectors. The system is solved using linear algebraic methods, enabling us to compute the variance-covariance structure of overlap vectors. The computation also suggests the AT-line condition in the MSK model from the replica symmetry side. Note that our species-wise cavity method does not require \pd~$\gD^2$, but the AT line condition for \indf~$\gD^2$ seems to be more complicated; see the discussions below Theorem~\ref{thm:varoverlap}.

	\item In the case of \pd~$\gD^2$, we prove the AT line condition from the RSB side under some natural assumption. The key is still the perturbation idea of the Parisi formula. This generalizes the result for $2$-species SK model in~\cite{BSS19}. For the \indf~$\gD^2$ case, we use our species-wise cavity approach to give a conjectured form of the AT line condition. To get a rigorous proof of the AT line in this case is challenging. Because, first, the Parisi formula for \indf~MSK model is still in mystery, so the classical perturbation technique can not be used. Second, proving the uniqueness of stationary point is hard. We prove a uniqueness result in 2-species case using an elementary approach, see Proposition~\ref{prop:indf-uniq}.
\end{enumerate}

The definition of the MSK model is given in Section~\ref{s1sec:msk}, then we review the main results of the $2$-species case in~\cite{BSS19}, and conclude Section~\ref{sec:intro} by the statement of our results. Readers can always check notations back in Section~\ref{notation}. A road map is given in Section~\ref{roadmap}.

\subsection{MSK model}\label{s1sec:msk}
Fix $N \ge 1$. For a spin configuration on the $N$-dimensional hyper-cube, $\mvgs =( \gs_1, \gs_2, \ldots, \gs_N) \in \gS_N := \{-1,+1\}^N$, we consider the Hamiltonian given by
\begin{align}\label{hamilton}
	H_N(\mvgs) := \frac{\gb}{\sqrt{N}}\sum_{1\le i<j\le N} g_{ij} \gs_i \gs_j+ h \sum_{i=1}^N \gs_i
\end{align}
where $g_{ij}$, the disorder interaction parameters are independent centered Gaussian random variables, $\gb>0$ is the inverse temperature and $h \ge 0$ is the external field.

In the classical SK model, the variance structure of disorder $g_{ij}$ is homogeneous, usually taken as $g_{ij} \sim \N(0,1)$. While in the MSK model, the variance of $g_{ij}$ depends on the structure of species among $N$ spins.

Assume that, there are $m  \ge  2$ species. When $m=1$ the MSK model reduces to the classical SK model. We partition the set of spins into $m$ disjoint sets, namely,
\[
	I = \bigcup_{s=1}^m I_s = \{1,2,\ldots, N\},\qquad I_{s}\cap I_{t}=\emptyset\text{ for } s\neq t.
\]
For $i \in I_s, j \in I_t$, we assume
\[ \E g_{ij}^2 = \gD_{st}^2 ,\]
\ie~the inhomogeneity of MSK model comes from the interaction among different species. While proving MSK Parisi formula in~\cites{Pan15,BCMT15}, the assumption that $\gD^2 = (\!(\gD_{st}^2)\!)_{s,t=1}^m$ is symmetric and \pd\ was used. But in this paper, we do not require the positive-definiteness condition for most of the results. Besides that, we assume that the ratio of spins in each species is fixed asymptotically, \ie~for $s=1,2,\ldots,m$ and
\[ \gl_{s,N}:={|I_s|}/{N},\]
we have
\[ \lim_{N\to \infty} \gl_{s,N} = \gl_s \in (0,1). \]
Since $\gl_{s,N}$ and $\gl_s$ are asymptotically the same, for the rest of the article we will use $\gl_s$ instead of $\gl_{s,N}$ for convenience. We denote $\gL:=\diag(\gl_1,\gl_2,\ldots, \gl_m)$.

The overlap vector between two replicas $\mvgs^1, \mvgs^2 \in \gS_N$ is given by
\[ \vR_{12} = (R_{12}^{(1)},R_{12}^{(2)},\ldots,R_{12}^{(m)})^{\T} ,\]
where $$R_{12}^{(s)} := \frac{1}{|I_s|} \sum_{i \in I_s} \gs_i^1 \gs_i^2.$$ is the overlap restricted to species $s \in \{1, \ldots, m\}$. In some cases we write it as $R^{(s)}$ for short if there are only 2 replicas involved. All vectors will be considered as a column vector in the rest of the article.

A central question in spin glasses is to understand the free energy
\begin{align}
	F_N := \frac{1}{N} \log Z_N, \quad \text{where} \quad Z_N := \sum_{\mvgs \in \gS_N} \exp(H_N(\mvgs))
\end{align}
is the partition function, and the Gibbs measure is given by
\begin{align}
	G_{N}(\mvgs) = {\exp(H_N(\mvgs))}/{Z_N},\qquad \mvgs\in \gS_{N}.
\end{align}
Later we will also use $\la \cdot \ra$ to denote the Gibbs average just for convenience; check Section~\ref{notation}. When $\gD^{2}$ is \pd, Parisi formula gives a variational representation of the limiting free energy for all $\gb >0, h\in \dR$, which was set up in~\cites{BCMT15,Pan15} for MSK model. This paper is mainly about the phase transitions and fluctuation of free energy in the RS regime. Before stating the main results, let us recall some related results in MSK model in~\cites{BSS19,Pan15,BCMT15}. We will only use the Parisi formula in the proof of Theorem~\ref{AT line}.


\subsection{Related results in MSK model} \label{s1sec:2sk}

Consider a sequence of real numbers
\begin{align}\label{seq1}
	0=\gz_0 < \gz_1 < \cdots < \gz_k < \gz_{k+1}=1,
\end{align}
and for each $s \in \{1,2, \ldots, m\}$, the sequences
\begin{align}\label{seq2}
	0=q_0^s\le q_1^s \le \cdots \le q_{k+1}^s \le q_{k+2}^s = 1.
\end{align}
For $0 \le l \le k+2$, define $\vq_\ell = (q_\ell^s)_{s=1}^{m}$,
\begin{align}\label{Q sequen}
	Q_\ell := \frac 1 2 {\vq_\ell}^{\T} \gL \gD^2 \gL \vq_\ell, \quad Q_\ell^s :=(\gD^2 \gL \vq_\ell)_s \quad \text{for} \ 1 \le s\le m.
\end{align}
Given these sequences, consider i.i.d.~standard Gaussian random variables $(\eta_\ell)_{1\le \ell \le k+2}$. Define
\[ X_{k+2}^s := \log \cosh\bigl(h+\gb \sum_{0 \le \ell \le k+1} \eta_{\ell+1}(Q_{\ell+1}^s-Q_{\ell}^s)^{\half}\bigr) ,\]
then recursively define for $0 \le \ell \le k+1$,
\[ X_\ell^s := \frac{1}{\gz_\ell} \log \E_{\ell+1} \exp(\gz_\ell X_{\ell+1}^s) .\]
where $\E_{\ell+1}$ denotes expectation w.r.t.~$\eta_{\ell+1}$. The following theorem gives the Parisi formula in MSK model.
\begin{thm}
	[{\cite{Pan15}*{Theorem 1}}] For the MSK model with positive-definite $\gD^2$, the limiting free energy is given by
	\begin{align}\label{par}
		\lim_{N \to \infty} F_N = \inf_{\mvgz,\vq}\mathscr{P}(\mvgz,\vq),
	\end{align}
	where
	\[ \sP(\mvgz, \vq) = \log 2+\sum_{s=1}^{m}\gl_s X_0^s - \frac{\gb^2}{2} \sum_{\ell=1}^{k+1} \gz_{\ell}(Q_{\ell+1}-Q_\ell) .\]
	is the Parisi functional  and the $\inf$ in~\eqref{par} is taken over all the sequences in~\eqref{seq1} and~\eqref{seq2}.
\end{thm}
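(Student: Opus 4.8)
The plan is to establish matching upper and lower bounds for $\lim_{N}F_N$; the upper bound is the part carried out in~\cite{BCMT15} and the matching lower bound is the contribution of~\cite{Pan15}. For the upper bound I would run a Guerra--Talagrand interpolation between the MSK Hamiltonian (at $t=1$) and a decoupled comparison Hamiltonian (at $t=0$) built from a Ruelle probability cascade indexed by the parameters $\mvgz$ in~\eqref{seq1} and $\vq$ in~\eqref{seq2}, together with the species-wise corrections $Q_\ell, Q_\ell^s$ from~\eqref{Q sequen}. Differentiating $\tfrac1N \E\log(\text{interpolating partition function})$ in $t$ and applying Gaussian integration by parts, the contributions organize into a remainder proportional to $-\E\bigl\la(\vR_{12}-\vq)^{\T}\gL\gD^2\gL(\vR_{12}-\vq)\bigr\ra$, where $\vq$ is the cascade overlap level matched to the pair of replicas; here the \pd\ assumption on $\gD^2$ (hence on $\gL\gD^2\gL$) is exactly what forces this remainder to be $\le 0$. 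Integrating $t$ from $1$ down to $0$ and then optimizing over $\mvgz,\vq$ gives $\limsup_N F_N \le \inf_{\mvgz,\vq}\sP(\mvgz,\vq)$.

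The lower bound is the substantial part. First I would use the Aizenman--Sims--Starr cavity scheme: write $F_N$ as a telescoping sum of one-spin cavity increments (species by species), so that $\liminf_N F_N$ is bounded below by a functional $\cP(G)$ of a subsequential limit $G$ of the (perturbed) Gibbs measures, a random measure on a separable Hilbert space whose $m$-dimensional overlap array $(\vR_{\ell\ell'})$ drives the formula. Before passing to the limit, add to $H_N$ a small mixed $p$-spin perturbation acting separately within each species; standard arguments show this leaves $\lim_N F_N$ unchanged while forcing $G$ to obey the \emph{multi-species Ghirlanda--Guerra identities}. By Panchenko's ultrametricity theorem, the overlap array of each individual species is then ultrametric and generated by a Ruelle probability cascade.

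The key remaining step --- and the place I expect the real difficulty --- is \emph{synchronization}: one must show that almost surely $\vR_{\ell\ell'}$ lies on a single deterministic monotone curve in $[-1,1]^m$, i.e.\ each species overlap $R^{(s)}_{\ell\ell'}$ is a fixed Lipschitz nondecreasing function of one scalar parameter (for instance of the $\gL$-weighted total overlap $\sum_s \gl_s R^{(s)}_{\ell\ell'}$, which also satisfies the Ghirlanda--Guerra identities). This collapses the $m$-dimensional overlap structure to a single one-dimensional cascade parametrized by some admissible $(\mvgz,\vq)$; feeding that synchronized cascade into $\cP(G)$ and recognizing the resulting recursion as precisely the one defining $X_{\ell}^s$ above yields $\cP(G) = \sP(\mvgz,\vq)$, hence $\liminf_N F_N \ge \inf_{\mvgz,\vq}\sP(\mvgz,\vq)$, which matches the upper bound. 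The obstacle is that the Ghirlanda--Guerra identities alone only pin down the law of each single species overlap and of the weighted total; deducing that all $m$ of them are rigidly coupled requires an extra invariance and stochastic-domination argument on the asymptotic Gibbs measure, whereas the other ingredients (perturbation invariance of the free energy, ultrametricity, and the multi-species cavity bookkeeping) are by now standard though technically heavy.
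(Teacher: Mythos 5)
Your sketch accurately reproduces the two-sided argument from the cited literature: the Guerra interpolation upper bound of~\cite{BCMT15}, where positive-definiteness of $\gL\gD^2\gL$ forces the sign of the interpolation error, and Panchenko's lower bound in~\cite{Pan15} via the Aizenman--Sims--Starr scheme, mixed $p$-spin perturbations yielding multi-species Ghirlanda--Guerra identities, ultrametricity, and the synchronization mechanism that couples the species overlaps to a single cascade. The paper under review simply cites this theorem rather than proving it, and your proposal is a faithful high-level account of the proof that appears in those references, correctly identifying synchronization as the key new ingredient supplied by~\cite{Pan15}.
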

In the above variational formula~\eqref{par}, let $k=0$ and $q_1^s = q_s\in [0,1]$, then the Parisi functional on the RS regime simplifies to
\begin{align*}
	\sP_{\RS}(\vq)
	 & := \log 2 + \sum_{s=1}^m \gl_s \E\log \cosh(\gb \eta\sqrt{(\gD^{2}\gL\vq)_s}+h)+\frac{\gb^2}{4} (\vone-\vq)^{\T}\gL\gD^{2}\gL(\vone-\vq)
\end{align*}
where $\vone$ is the vector of all $1$'s and $\vq=(q_{1},q_{2},\ldots,q_{m})^{\T}$. Taking derivative w.r.t.~$q_t$ for $t=1, \ldots, m$

\begin{align}\label{sys-crit}
	\frac{\partial \sP_{\RS}}{\partial q_t} = \frac{\gb^2}{2}\gl_t \sum_{s=1}^m \gD_{s,t}^2\gl_s\left[q_s - \E \tanh^2(\gb \eta \sqrt{(\gD^{2}\gL\vq)_s} +h)\right] .
\end{align}
and setting it 0, we get the set of critical points (when $\gD^2$ is invertible) to be
\begin{align}\label{crit set}
	\cC(\gb,h) = \left\{ \vq \in [0,1]^m \ \bigl|\ \E \tanh^2 (\gb\eta \sqrt{(\gD^{2}\gL\vq)_s}+h) = q_s \ \text{for} \ s= 1,2, \ldots m\right\}.
\end{align}
For the MSK model with positive-definite $\gD^2$, we define the Replica Symmetric solution as
\begin{align}\label{RS}
	\RS(\gb,h):=\inf_{\vq \in [0,1]^m} \sP_{\RS}(\vq).
\end{align}
Note that $\sP_{\RS}$ is well-defined for \indf~$\gD^2$. However, one expects the replica symmetric solution to be achieved at a saddle point of  $\sP_{\RS}$ instead of a minimizer, see~\cite{BGG11}*{Theorem~4}. The $\sP_{\RS}$ functional in the \indf~case seems to be not convex. We only use the $\RS(\gb,h)$ expression for proving the replica symmetry breaking in Theorem~\ref{AT line} for \pd~case. Actually, in the RS region for \pd~ $\gD^2$, the infimum should be achieved at the critical point in $\cC(\gb,h)$ instead of some boundary point. This fact can easily be partially checked by comparing $\RS(\gb,h)$ with the replica symmetric solution we obtained from the overlap concentration results in Theorem~\ref{RS solution}. However, to rigorously prove this for the whole region can be a nasty calculus problem. The other way is to prove the convexity of the $\sP_{\RS}$ functional, which is beyond the scope of the current paper. Therefore, we will assume that the infimum in~\eqref{RS} is not achieved at the boundary for Theorem~\ref{AT line}. The following results  were proved in~\cite{BSS19} for $m=2$.
\begin{thm}
	[{\cite{BSS19}*{Theorem 1.1--1.2}}]\label{2spec-res} Restricted to $m=2$ species model, under the assumption
	\begin{align}\label{lamda}
		\gD_{12}^2 =1, \gD_{11}^2\gD_{22}^2 >1 \ \text{and} \ \gl_1 \gD_{11}^2 \ge \gl_2 \gD_{22}^2 ,
	\end{align}
	\begin{enumerate}
		[label=(\roman*)]
		\item If either $h>0$ or
		      \begin{align}\label{con1}
			      \gb^2 <\frac{2}{\gl_1 \gD_{11}^2 + \gl_2 \gD_{22}^2 + \sqrt{(\gl_1\gD_{11}^2-\gl_2\gD_{22}^2)^2+4\gl_1 \gl_2}}
		      \end{align}
		      then $\cC(\gb,h)=\{\vq_{\star}\}$ is a singleton.
		\item Assume $h>0$, let $\vq \in \cC(\gb,h)$, and $ \gc_s:= \E \sech^4(\gb \eta\sqrt{(\gD^{2}\gL\vq)_s}+h)$ for $s=1,2$. If
		      \begin{align}\label{con2}
			      \gb^2 > \frac{2}{\gl_1\gc_1 \gD_{11}^2 + \gl_2\gc_2 \gD_{22}^2 + \sqrt{(\gl_1\gc_1\gD_{11}^2-\gl_2\gc_2\gD_{22}^2)^2+4\gl_1\gl_2\gc_1 \gc_2}}
		      \end{align}
		      then
		      \[\lim_{N\to \infty}F_N < \RS(\gb,h).\]
	\end{enumerate}
\end{thm}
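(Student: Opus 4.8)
For the uniqueness of the critical point the plan is to read~\eqref{sys-crit}, equivalently membership in the set~\eqref{crit set}, as a fixed-point equation $\vq=\Psi(\vq)$ for $\Psi\colon[0,1]^{2}\to[0,1]^{2}$, $\Psi_{s}(\vq)=\E\tanh^{2}\bigl(\gb\eta\sqrt{(\gD^{2}\gL\vq)_{s}}+h\bigr)$. Writing $f(x)=\E\tanh^{2}(\gb\eta\sqrt{x}+h)$, one Gaussian integration by parts gives $f'(x)=\gb^{2}\,\E\bigl[3\sech^{4}u-2\sech^{2}u\bigr]$ with $u=\gb\eta\sqrt{x}+h$, and since $3t^{2}-2t\in[-\tfrac13,1]$ for $t\in(0,1]$ one has $\abs{f'(x)}\le\gb^{2}$ for every $x\ge0$. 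Hence $D\Psi(\vq)=\diag\bigl(f'((\gD^{2}\gL\vq)_{s})\bigr)\gD^{2}\gL$ is dominated entrywise in absolute value by the nonnegative matrix $\gb^{2}\gD^{2}\gL$, so $\abs{\Psi^{\circ n}(\vq)-\Psi^{\circ n}(\vq')}\le(\gb^{2}\gD^{2}\gL)^{n}\abs{\vq-\vq'}$ entrywise; under~\eqref{con1}, \ie~$\gb^{2}\rho(\gD^{2}\gL)<1$ (equivalently $\gb<\gb_{c}$), the right side vanishes as $n\to\infty$, which gives a unique critical point---equal to $\vzero$ when $h=0$. The remaining case, $h>0$ with $\gb$ arbitrary, is more delicate since $\Psi$ need not be a Euclidean contraction; there I would adapt the classical Latala-type uniqueness argument for the SK model with nonzero field (see also Talagrand's book), using the monotonicity of $\Psi$ on the positive cone together with the strict positivity $\Psi(\vzero)=(\tanh^{2}h,\tanh^{2}h)^{\T}\succ\vzero$ forced by $h>0$ to exclude a second critical point.

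\textbf{Part~(ii).} This is the ``perturb the Parisi formula'' argument. By part~(i), $h>0$ forces $\cC(\gb,h)=\{\vq\}$, and since $\gD^{2}$ is \pd\ (which is exactly $\gD_{11}^{2}\gD_{22}^{2}>1$ in~\eqref{lamda}) the MSK Parisi formula~\cite{Pan15} (see~\eqref{par}) gives $\lim_{N}F_{N}=\inf_{\mvgz,\vq'}\sP(\mvgz,\vq')$; moreover, using that the infimum in~\eqref{RS} is attained at the unique critical point $\vq$, one has $\sP_{\RS}(\vq)=\RS(\gb,h)$. It therefore suffices to show that, once~\eqref{con2} holds, the replica-symmetric point $\vq$ is \emph{not} a local minimum of the full Parisi functional $\sP$; then $\lim_{N}F_{N}=\inf\sP<\sP_{\RS}(\vq)=\RS(\gb,h)$.

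To establish this, I would perturb $\vq$ in the direction that opens a one-step replica-symmetry-breaking plateau: take $k=1$, $\gz_{1}=\gz\in(0,1)$, and the ``first-moment preserving'' sequence $q_{1}^{s}=q_{s}-(1-\gz)\gd t^{s}$, $q_{2}^{s}=q_{s}+\gz\gd t^{s}$, $q_{3}^{s}=1$ for a fixed direction $\vt$ and $\gd\to0^{+}$, so that $\gz q_{1}^{s}+(1-\gz)q_{2}^{s}=q_{s}$ exactly and $\gd=0$ recovers $\vq$. Expanding $\sP-\sP_{\RS}(\vq)$ along this family: the $O(\sqrt{\gd})$ contributions cancel by Gaussian symmetry (the plateau increment $\gb\eta_{2}\sqrt{(\gD^{2}\gL(\vq_{2}-\vq_{1}))_{s}}$ enters oddly), and the $O(\gd)$ term vanishes---here the first-moment-preserving choice together with criticality of $\vq$ ($\nabla\sP_{\RS}(\vq)=0$) is used---so the leading term is $O(\gd^{2})$. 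After the required nested Gaussian integrations by parts, the $O(\gd^{2})$ coefficient is the difference of a positive ``entropic'' piece $\tfrac{\gb^{2}}{4}\gz(1-\gz)\,\vt^{\T}\gL\gD^{2}\gL\vt$ (from the $-\tfrac{\gb^{2}}{2}\sum\gz_{\ell}(Q_{\ell+1}-Q_{\ell})$ term) and a piece coming from $\sum_{s}\gl_{s}X_{0}^{s}$; this difference is a quadratic form in $\vt$ that is negative in some direction precisely when $\gb^{2}\rho\bigl(\diag(\gc_{1},\gc_{2})\,\gD^{2}\gL\bigr)>1$, with $\gc_{s}=\E\sech^{4}(\gb\eta\sqrt{(\gD^{2}\gL\vq)_{s}}+h)$---and for this $2\times2$ matrix that is exactly~\eqref{con2}. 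Taking $\vt$ to be the corresponding Perron eigenvector (its entries are strictly positive, so $q_{1}^{s}<q_{s}<q_{2}^{s}$ and all entries lie in $[0,1]$ for small $\gd$) makes $\sP<\sP_{\RS}(\vq)$, as needed. The condition $\gl_{1}\gD_{11}^{2}\ge\gl_{2}\gD_{22}^{2}$ in~\eqref{lamda} is merely a labeling convention, and $\gD_{12}^{2}=1$ a normalization.

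The step I expect to be the main obstacle is this $O(\gd^{2})$ expansion of $\sP$: one must verify that $O(\gd^{2})$ is genuinely the leading order---the merging of consecutive $q$-levels makes $\sP$ non-smooth in the naive coordinates, which is exactly why the first-moment-preserving parametrization is essential---and, more importantly, that the resulting quadratic form is pinned down exactly by the $\sech^{4}$-matrix and hence by~\eqref{con2}. This requires careful bookkeeping of several nested Gaussian integrations by parts and of the cancellations coming from criticality of $\vq$, and it is precisely the step that leans essentially on the Parisi formula---hence on positive-definiteness of $\gD^{2}$---and does not transfer to the \indf\ case. A secondary difficulty is the $\gb$-unconditional uniqueness in part~(i) when $h>0$, which needs the monotone/positive-operator argument rather than a contraction estimate.
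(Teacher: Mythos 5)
Note first that Theorem~\ref{2spec-res} is stated in this paper purely as a citation of~\cite{BSS19}*{Theorems~1.1--1.2}; the paper does not re-prove it, so the natural comparison is with the paper's closely related results (Theorem~\ref{thm0}, Proposition~\ref{prop:indf-uniq}, Theorem~\ref{AT line}, Corollary~\ref{RSB-cor}) and with the strategy of~\cite{BSS19}.

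For part~(i) in the regime~\eqref{con1}, your contraction argument is essentially the same as the paper's proof of Theorem~\ref{thm0}: you correctly identify~\eqref{con1} with $\gb^2\rho(\gD^2\gL)<1$ (i.e.~$\gb<\gb_c$), and the bound $\abs{f'(x)}\le\gb^2$ with $f(x)=\E\tanh^2(\gb\eta\sqrt{x}+h)$, via one Gaussian integration by parts and $3t^2-2t\in[-1/3,1]$, is correct. The only difference is cosmetic: you dominate the Jacobian entrywise by the nonnegative matrix $\gb^2\gD^2\gL$ and iterate, while the paper passes to $\vx=\gL^{1/2}\vq$ and bounds the operator norm $\norm{J(\Psi)}\le\gb^2\rho(\gD^2\gL)$. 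Both yield the same region.

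The genuine gap is the other half of part~(i): uniqueness for $h>0$ at arbitrary $\gb$. Your ``adapt Latala--Guerra using monotonicity on the positive cone'' is a pointer, not a proof, and it is precisely the hard step. The paper itself emphasizes that the 1D Latala--Guerra argument does not transfer to the coupled nonlinear system~\eqref{syseq}, that~\cite{BSS19} handle only $m=2$ via a nontrivial elementary two-variable argument, and that no analogue is known for $m\ge3$. (The paper's Proposition~\ref{prop:indf-uniq} gives a clean $m=2$ argument in the \indf\ case by decoupling $Q^1,Q^2$ through two auxiliary monotone scalar maps; the \pd\ $m=2$ case you need here is the separate two-variable argument of~\cite{BSS19}.) Your proposal does not supply it.

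For part~(ii), the strategy---perturb the Parisi formula around the RS critical point and show the RS value is not the infimum once~\eqref{con2} holds---is the right one, and you correctly match~\eqref{con2} to $\gb^2\rho(\gC\gD^2\gL)>1$ and correctly note that a Perron eigenvector gives an admissible perturbation direction with positive entries. But your parametrization differs from the paper's and from~\cite{BSS19}: you fix $\gz\in(0,1)$ and split $q_1^s<q_s<q_2^s$ around $\vq$ with a ``first-moment preserving'' scale $\gd\to0$, whereas the paper/\cite{BSS19} fix $q_1^s=q^s_{\star}$, let $q_2^s=p^s\ge q^s_{\star}$ vary, and differentiate the 1-RSB functional in $\gz$ at $\gz=1$, defining $V(\vp)=\partial_\gz\sP_{\mathrm{1RSB}}\big|_{\gz=1}$. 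The latter avoids the $\sqrt{\gd}$-scale degeneracy you flag: $V(\vq_{\star})=0$, $\nabla V(\vq_{\star})=0$, and $HV(\vq_{\star})=\gb^2\gL(\gb^2\gD^2\gL\gC\gD^2-\gD^2)\gL$ are obtained by direct Gaussian integration by parts, and then the Perron-eigenvector test as in the proof of Theorem~\ref{AT line} gives $\vx^{\T}HV(\vq_\star)\vx=\rho(\gC\gD^2\gL)\bigl(\gb^2\rho(\gC\gD^2\gL)-1\bigr)>0$. Your $\gd\to0$ expansion can in principle reach the same quadratic form, but the odd-order cancellations and the precise second-order coefficient are exactly the unverified ``main obstacle'' you flag. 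You also implicitly use $\RS(\gb,h)=\sP_{\RS}(\vq_{\star})$, i.e.~that the RS infimum sits at the interior critical point rather than the boundary; the paper explicitly has to assume this in Theorem~\ref{AT line}, and for $m=2$ it is part of what~\cite{BSS19} establish. So part~(ii) has the right skeleton but is not a complete argument as written.
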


Comparing with the classical SK model, since the RHS of~\eqref{con1} is reduced to the critical $\gb$ for $h=0$ in SK model when $m=1$ and the similar form of RHS of~\eqref{con1} and~\eqref{con2}, one can reasonably guess that~\eqref{con1} gives an RS regime of MSK model, and~\eqref{con2} is the AT line condition. The authors in~\cite{BSS19} proved that~\eqref{con2} indeed gives RSB phase for $m=2$ using the idea in~\cite{Tal11b}*{Chapter~13} by a 1-RSB perturbation of the Parisi formula, and the uniqueness of $\vq_{\star}$ is essential in the proof of Theorem~\ref{2spec-res} part (ii), without that, it is hard to give an accurate description of the AT line.

In this paper, we prove the uniqueness of $\vq_{\star}$ in the general species case under an analogous condition of~\eqref{con1}. To fully generalize Theorem~\ref{2spec-res} in the  $m>2$ case  (\ie~incorporating the case $h>0$), one needs to analyze the uniqueness of the solution to a nonlinear system. Unfortunately, we are not able to prove that. However, assuming $\gD^2$ is \pd~, $\vq_{\star}$ is unique when $h>0$ and the infimum in~\eqref{RS} is not achieved at the boundary, we can prove the AT line condition for general $m$. The idea is still based on the perturbation technique of the Parisi formula. On the other hand, our variance analysis in Section~\ref{sec:cavity} and Section~\ref{sec:varoverlap} suggests the AT line condition should be true from the RS side when $\gD^2$ is \pd.


\subsection{Statement of the main results}\label{s1sec:main}
First, by adapting Lalata's argument, we prove that the MSK model is in the RS regime when $\gb < \gb_0$. As noted in Section~\ref{sec:intro}, our argument holds for \indf~$\gD^2$ and proves the asymptotics of the free energy in the MSK model with \indf~$\gD^2$. To the best of our knowledge, this is the first result dealing with general \indf~$\gD^2$ in the Ising MSK model.

For the rest of the article, we will assume the following:
\begin{ass}\label{ass:0}
	Assume that $\gD^{2}$ is a symmetric and invertible $m\times m$ nonzero matrix with non-negative entries. However, it need not be positive-definite.
\end{ass}

Before stating the RS phase diagram results, we need to generalize that the set $\cC(\gb,h)$ is singleton from $2$-species~\cite{BSS19} to general $m$-species.
We define
\begin{align}\label{betac}
	\gb_{c}:=\rho(\gD^2 \gL)^{-\half}.
\end{align}
where $\rho(A)$ is the \emph{spectral radius} or the largest absolute value of the eigenvalues of $A$. In general, one has $\rho(A) \le \norm{A}$. But it is easy to check that for symmetric $A$, $\rho(A)=\norm{A}$.

\begin{thm}[Uniqueness of solution]\label{thm0}
	Assume that $\gb < \gb_{c}$. Then $\cC(\gb,h)$ is a singleton set, \ie~the system
	\begin{align}\label{syseq}
		q_s = \E \tanh^2(\gb \eta \sqrt{(\gD^2\gL \vq)_s}+h),\qquad s=1,2,\ldots, m
	\end{align}
	has a unique solution, where $\eta\sim \N(0,1)$.
\end{thm}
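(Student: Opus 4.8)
The plan is to phrase the fixed-point system \eqref{syseq} as $\vq = \Psi(\vq)$, where $\Psi : [0,1]^m \to [0,1]^m$ has components $\Psi_s(\vq) = \E\tanh^2\bigl(\gb\eta\sqrt{(\gD^2\gL\vq)_s}+h\bigr)$, and show that $\Psi$ is a contraction for $\gb < \gb_c$ in a suitable norm. Since $[0,1]^m$ is compact and convex and $\Psi$ is clearly continuous, existence of at least one solution is immediate from Brouwer; the whole content is uniqueness, which will follow from a strict contraction estimate. The key one-variable input is a bound on $\frac{\partial}{\partial x}\E\tanh^2(\gb\eta\sqrt{x}+h)$ for $x \ge 0$: writing $f(x) = \E\tanh^2(\gb\eta\sqrt x + h)$ and using Gaussian integration by parts (or differentiating under the integral and the substitution $\sqrt x$), one gets $f'(x) = \tfrac{\gb^2}{2}\,\E\bigl[(1-\tanh^2(\gb\eta\sqrt x+h))^2 - 2\tanh^2(\gb\eta\sqrt x+h)(1-\tanh^2(\gb\eta\sqrt x+h))\bigr]$ or some such expression; the point is simply $0 \le f'(x) \le \tfrac{\gb^2}{2}$, which one reads off from $|\tanh| \le 1$ and $0 \le 1-\tanh^2 \le 1$. (Even the crude bound $f'(x) \le \gb^2$, if that is all that comes out cleanly, will give $\gb < \gb_c/\sqrt2$, so some care with the constant is worthwhile — I expect the sharp constant $\gb^2/2$ to appear, matching $\gb_c = \rho(\gD^2\gL)^{-1/2}$ exactly.)

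Next I would convert this scalar bound into a contraction estimate for $\Psi$. By the mean value theorem applied coordinatewise, for $\vq, \vq' \in [0,1]^m$,
\[
  \Psi_s(\vq) - \Psi_s(\vq') = f_s'(\xi_s)\,\bigl((\gD^2\gL\vq)_s - (\gD^2\gL\vq')_s\bigr)
\]
for some intermediate point, with $0 \le f_s'(\xi_s) \le \tfrac{\gb^2}{2}$. Hence $\Psi(\vq) - \Psi(\vq') = D\,\gD^2\gL\,(\vq-\vq')$ where $D = \diag(f_1'(\xi_1),\dots,f_m'(\xi_m))$ with $\|D\| \le \gb^2/2$. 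The difficulty is that $D\gD^2\gL$ is not symmetric, so I cannot directly bound its operator norm by $\tfrac{\gb^2}{2}\rho(\gD^2\gL)$ in the Euclidean norm. The fix is to work in the norm $\|\vx\|_* := \|\gL^{1/2}\gD\gL^{1/2}\vx\|_2$ when $\gD^2$ is \pd, or more robustly to use the weighted inner product under which $\gD^2\gL$ is self-adjoint: since $\gD^2\gL = \gL^{-1/2}(\gL^{1/2}\gD^2\gL^{1/2})\gL^{1/2}$ is similar to the symmetric matrix $\gL^{1/2}\gD^2\gL^{1/2}$, it is self-adjoint with respect to $\langle \vx,\vy\rangle_{\gL} := \vx^\T \gL \vy$, and its spectral radius equals $\rho(\gD^2\gL)$. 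One checks that $D$ is self-adjoint and has norm $\le \gb^2/2$ in this same inner product (it is diagonal, and $\gL$ is diagonal, so $\langle D\vx,\vy\rangle_\gL = \langle \vx, D\vy\rangle_\gL$ trivially, and $\|D\|_\gL = \max_s |f_s'(\xi_s)| \le \gb^2/2$). Therefore, in the $\|\cdot\|_\gL$ norm,
\[
  \|\Psi(\vq)-\Psi(\vq')\|_\gL \le \|D\gD^2\gL\|_\gL\,\|\vq-\vq'\|_\gL \le \tfrac{\gb^2}{2}\,\rho(\gD^2\gL)\,\|\vq-\vq'\|_\gL,
\]
and $\gb < \gb_c = \rho(\gD^2\gL)^{-1/2}$ makes the constant $< 1$. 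Banach's fixed point theorem then yields uniqueness.

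The main obstacle, then, is the non-symmetry of $D\gD^2\gL$: the naive Euclidean estimate loses the sharp constant, and one must identify the correct inner product (equivalently, symmetrize via $\gL^{1/2}$) to get exactly $\gb_c$. A secondary technical point is justifying differentiation under the expectation and obtaining the clean bound $f'(x) \le \gb^2/2$ uniformly in $x \ge 0$ and $h \ge 0$ — this is a short Gaussian-integration-by-parts computation, but one should double-check the constant since it is precisely what pins down $\gb_c$. I would also remark that when $\gD^2$ is merely \indf\ but invertible with non-negative entries (Assumption~\ref{ass:0}), the argument is unchanged: $\gL^{1/2}\gD^2\gL^{1/2}$ is still symmetric (not necessarily positive), $\rho(\gD^2\gL) = \|\gL^{1/2}\gD^2\gL^{1/2}\|_2$ still holds, and the contraction constant $\tfrac{\gb^2}{2}\rho(\gD^2\gL)$ is still $<1$ for $\gb<\gb_c$, so uniqueness holds in the \indf\ case too — consistent with the paper's claim that $\gb_c$ has the same form in both cases.
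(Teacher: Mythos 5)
Your overall strategy is the same as the paper's: compute the Jacobian of the fixed-point map, show it is a contraction in a norm adapted to $\gL$, and invoke Banach. The paper changes variables to $\vx = \gL^{1/2}\vq$ so that the relevant matrix becomes the symmetric $\gL^{1/2}\gD^2\gL^{1/2}$ and the ordinary Euclidean operator norm can be used; you instead keep $\vq$ and work with the $\gL$-weighted inner product. These are the same idea written in two coordinate systems, so there is no methodological difference.

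However, your numerical accounting of the one-variable derivative is wrong, and in a way that happens to reverse your conclusions. Writing $f(x)=\E\tanh^2(\gb\eta\sqrt x+h)$ and using Gaussian integration by parts gives
\[
f'(x)=\frac{\gb^2}{2}\,\E\bigl[(\tanh^2)''(\gb\eta\sqrt x+h)\bigr]
=\gb^2\,\E\bigl[(1-\tanh^2)(1-3\tanh^2)\bigr],
\]
since $(\tanh^2)''=2(1-\tanh^2)(1-3\tanh^2)$. For $u=\tanh^2\in[0,1]$, $(1-u)(1-3u)$ ranges over $[-1/3,1]$, so $f'(x)\in[-\gb^2/3,\gb^2]$. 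Thus $f'$ is \emph{not} nonnegative, and the correct bound is $|f'|\le\gb^2$, not $\gb^2/2$; your formula for $f'$ is also missing a factor of $2$. Your worry that the ``crude'' bound $\gb^2$ would only give $\gb<\gb_c/\sqrt2$ is also backwards: with $|f'_s|\le\gb^2$ on the diagonal and $\|\gD^2\gL\|_\gL=\rho(\gD^2\gL)$, the contraction estimate reads $\|J(\Psi)\|_\gL\le\gb^2\rho(\gD^2\gL)<1$, which is exactly $\gb<\gb_c=\rho(\gD^2\gL)^{-1/2}$. Had the derivative bound really been $\gb^2/2$, the theorem would hold on the larger region $\gb<\sqrt2\,\gb_c$. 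So the bound you dismiss as crude is both the correct one and the one that produces $\gb_c$; this is precisely what the paper does after passing to $\vx=\gL^{1/2}\vq$, where the diagonal factor $A=\diag(\tfrac12\E f''(\cdot))$ is bounded in operator norm by $1$ and yields $\|J(\Psi)\|\le\gb^2\rho(\gD^2\gL)$. With these corrections your proof is complete, and, as you correctly note, it does not use positive-definiteness of $\gD^2$ anywhere.
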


\begin{rem}
	If we remove the invertiblity of $\gD^2$ in Assumption~\ref{ass:0}, then one has to replace the equations~\eqref{syseq} by setting~\eqref{sys-crit} to 0 and uniqueness holds for the vector $\gD^{2}\gL\vq$.
\end{rem}

The proof is similar to the SK model, which is because the RHS map is a contraction for small $\gb$. For large $\gb$, the contraction is not true anymore. In the SK model with $h>0$, Latala--Guerra lemma~\cite{Tal11a}*{Proposition 1.3.8} tells us $\vq$ is still unique. The proof is based on the monotone property of a nonlinear function and intermediate value theorem. In the MSK model, the analog of Latala--Guerra is not obvious since we are dealing with a system of nonlinear equations~\eqref{syseq}. The authors in~\cite{BSS19} give proof for the $m=2$ case using an elementary approach, but the idea is hard to generalize for $m \ge 3$. Their proof holds only for \pd~$\gD^2$ case. For a particular \indf~MSK model, the deep Boltzmann machine, the Latala-Guerra lemma~\cite{ACM21} has been extended to arbitrary depth with Gaussian random field $h$. For \indf~ $\gD^2$, the following Proposition~\ref{prop:indf-uniq} gives the uniqueness of $\vq_{\star}$ when $m=2$.

\begin{prop}\label{prop:indf-uniq}
	For $m=2$, assume $\gD^2 = \begin{pmatrix}a & b \\c & d \end{pmatrix}$ is \indf. If $h>0$, then the system~\eqref{syseq} has a unique solution for all $\gb>0$.
\end{prop}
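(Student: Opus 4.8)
The plan is to reduce the $m=2$ indefinite case to a genuinely one-dimensional fixed-point problem and then apply a Latała–Guerra-type monotonicity argument on that single variable. For $m=2$ with $\gD^2=\begin{pmatrix}a & b\\ b & d\end{pmatrix}$ indefinite (so $ad<b^2$, and $\det\gD^2<0$), write $u_s := (\gD^2\gL\vq)_s$ for $s=1,2$, so that $u_1=a\gl_1 q_1+b\gl_2 q_2$ and $u_2=b\gl_1 q_1+d\gl_2 q_2$. Since $\gD^2$ is invertible, the map $\vq\mapsto \vu$ is a linear bijection, and the system~\eqref{syseq} becomes $q_s=\psi(u_s;h)$ where $\psi(x;h):=\E\tanh^2(\gb\eta\sqrt{x}+h)$ (defined for $x\ge 0$, and we must first check $u_1,u_2\ge0$ on any solution — this follows because $q_s=\psi(u_s;h)\in[0,1]$ forces $u_s\ge0$ by continuity of the fixed-point equation, or one simply restricts attention to the relevant domain). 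Substituting, $\vu$ must satisfy
\begin{align*}
	u_1 &= a\gl_1\psi(u_1;h)+b\gl_2\psi(u_2;h),\\
	u_2 &= b\gl_1\psi(u_1;h)+d\gl_2\psi(u_2;h).
\end{align*}

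The key idea is to use the two equations to eliminate one variable. From the first equation, $b\gl_2\psi(u_2;h)=u_1-a\gl_1\psi(u_1;h)$, and from the second, $b\gl_1\psi(u_1;h)=u_2-d\gl_2\psi(u_2;h)$; combining gives a relation expressing $u_2$ as an explicit increasing or decreasing function of $u_1$ (via $\psi(u_1;h)$), namely $u_2 = d\gl_2\psi(u_2;h)+b\gl_1\psi(u_1;h)$, which for fixed $u_1$ is itself a one-variable fixed-point equation in $u_2$. Here one exploits that $\psi(\cdot;h)$ is strictly increasing and bounded, so $x\mapsto d\gl_2\psi(x;h)$ has derivative bounded by $d\gl_2\psi'$, and — provided the relevant contraction estimate holds, which when $h>0$ should follow from a Latała–Guerra-type strict bound $\sup_x \psi'(x;h)\cdot(\text{something}) <1$ tied to the eigenvalue condition — the auxiliary equation has a unique solution $u_2=U(u_1)$ with $U$ a $C^1$, monotone function. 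Substituting back into the first equation yields a single scalar equation $\Phi(u_1):=u_1-a\gl_1\psi(u_1;h)-b\gl_2\psi(U(u_1);h)=0$, and one shows $\Phi$ is strictly monotone (hence has at most one root) by differentiating: $\Phi'(u_1)=1-a\gl_1\psi'(u_1;h)-b\gl_2\psi'(U(u_1);h)U'(u_1)$, and the sign structure coming from the indefiniteness ($ad<b^2$) together with $h>0$ is what makes this derivative one-signed. Existence is automatic from continuity and the fact that $\psi$ is bounded, so $\Phi(0)\le 0$ and $\Phi$ is eventually positive.

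The main obstacle I expect is controlling the sign of $\Phi'$ — equivalently, proving the scalar reduction is monotone — in the indefinite regime for \emph{all} $\gb>0$, not just small $\gb$. For small $\gb$ everything is a contraction and there is nothing to do; the content is the large-$\gb$ regime, where one must lean on $h>0$. The crucial analytic input is a quantitative Latała–Guerra-type lemma for the scalar function $\psi(x;h)=\E\tanh^2(\gb\eta\sqrt x+h)$: one needs that $x\mapsto \psi(x;h)$, after the natural substitution, has the property that the map $x\mapsto \psi(x;h)/x$ (or the appropriate analogue governing the fixed point) is strictly decreasing when $h>0$. This is exactly the monotonicity underlying the classical Latała–Guerra proof, and verifying it amounts to a Gaussian integration-by-parts computation showing $\E[\text{(derivative expression)}]$ has a definite sign; I would carry that out by writing $\psi'(x;h)=\frac{\gb}{2\sqrt x}\E[\eta\,(\tanh^2)'(\gb\eta\sqrt x+h)]$, integrating by parts in $\eta$, and comparing with $\psi(x;h)$ — the strict inequality is where $h>0$ is used, since at $h=0$ one only gets $\le$ and the root $\vq=\vzero$ can coexist with a nontrivial one. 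Once that scalar lemma is in hand, assembling $U$, $\Phi$, and the sign of $\Phi'$ from the relations above is routine calculus exploiting $ad-b^2<0$.
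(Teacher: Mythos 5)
Your proposal takes a genuinely different route from the paper's, but as written it has two real gaps that keep it from closing.

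\textbf{Comparison of approaches.} The paper does not eliminate a variable. Instead it multiplies the system by $A^{-1}$ with $A=\gD^{2}\gL$: since $\det A<0$ and the entries of $A$ are nonnegative, $A^{-1}$ has sign pattern $\bigl(\begin{smallmatrix}-&+\\+&-\end{smallmatrix}\bigr)$, so the system is rewritten as $Q^{1}=Q^{2}g_{2}(Q^{2})$, $Q^{2}=Q^{1}g_{1}(Q^{1})$ with $g_{i}(x)$ a positive affine shift of $\psi(x;h)/x$ divided by an off-diagonal entry. Latała--Guerra gives $g_{i}$ strictly decreasing; from this the paper extracts \emph{two} relations between $Q^{1}$ and $Q^{2}$ (namely $g_{1}(Q^{1})g_{2}(Q^{2})=1$, a decreasing curve, and $(Q^{1})^{2}g_{1}(Q^{1})=(Q^{2})^{2}g_{2}(Q^{2})$, an increasing curve, since $x\mapsto x^{2}g_{i}(x)$ is strictly increasing), and a strictly increasing curve meets a strictly decreasing curve at most once. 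This is entirely symmetric in the two coordinates, never requires an implicit function theorem, and never needs to control derivatives of $\psi$. Your approach --- solve the second component for $u_{2}=U(u_{1})$ and study the monotonicity of the resulting one-variable equation $\Phi(u_{1})=0$ --- is a valid alternative strategy, but it is asymmetric and trades the paper's short algebraic argument for several nontrivial derivative estimates that your sketch does not supply.

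\textbf{Gap 1: contraction for the auxiliary equation.} You propose to obtain $U(u_{1})$ from $u_{2}=d\gl_{2}\psi(u_{2};h)+b\gl_{1}\psi(u_{1};h)$ by a contraction estimate $\sup_{x}\psi'(x;h)\cdot d\gl_{2}<1$. No such bound holds for all $\gb>0$: near $x=0$ one has $\psi'(x;h)\asymp\gb^{2}$, so for large $\gb$ the derivative $d\gl_{2}\psi'$ exceeds $1$. The correct mechanism is Latała--Guerra itself: dividing the auxiliary equation by $u_{2}$ gives $1=d\gl_{2}\psi(u_{2};h)/u_{2}+c/u_{2}$ with $c=b\gl_{1}\psi(u_{1};h)>0$, and the right side is strictly decreasing in $u_{2}$, hence there is at most one root. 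You flag uncertainty about this step yourself, but as written the justification you offer would fail precisely in the large-$\gb$ regime that is the content of the proposition.

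\textbf{Gap 2: monotonicity of $\Phi$.} You call the final step ``routine calculus exploiting $ad-b^{2}<0$,'' but $\Phi$ is not globally monotone in any obvious way; what one can show is that $\Phi'>0$ at every root, which suffices but requires work. Implicit differentiation of the auxiliary equation gives $U'(u_{1})=b\gl_{1}\psi'(u_{1};h)/(1-d\gl_{2}\psi'(U;h))$, and positivity of the denominator again comes from Latała--Guerra applied at the fixed point (the same strict inequality as in Gap~1), not for free. Then at a root $u_{1}^{*}$, writing $p=\psi'(u_{1}^{*};h)$, $q=\psi'(u_{2}^{*};h)$, the numerator of $\Phi'(u_{1}^{*})$ is $1-a\gl_{1}p-d\gl_{2}q-(b^{2}-ad)\gl_{1}\gl_{2}pq$; to see this is positive one substitutes the strict Latała--Guerra bounds $p<\psi(u_{1}^{*};h)/u_{1}^{*}$, $q<\psi(u_{2}^{*};h)/u_{2}^{*}$ and then uses the two fixed-point identities $u_{1}^{*}=a\gl_{1}\psi_{1}+b\gl_{2}\psi_{2}$, $u_{2}^{*}=b\gl_{1}\psi_{1}+d\gl_{2}\psi_{2}$ to verify that the substituted expression equals exactly $1$. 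That algebraic identity is the heart of your approach and is absent from the sketch. Once Gaps~1 and~2 are filled your route does work, but the paper's $A^{-1}$ transformation reaches the conclusion with far less bookkeeping.
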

This proposition will be used to discuss AT line for some bipartite SK models in Example~\ref{ex:RSB}. The proof of Proposition~\ref{prop:indf-uniq} and Theorem~\ref{thm0} will be given in Section~\ref{sec:RSB}.

Now we define the symmetric \pd~matrix
\begin{align}\label{V}
	\cV:=|\gL^{\sfrac12}\gD^2\gL^{\half}|.
\end{align}
obtained by taking absolute values of all the eigenvalues in the spectral decomposition of the symmetric matrix $\gL^{\sfrac12}\gD^2\gL^{\half}$. We also denote by
\begin{align}
	\ovR_{12} := \vR_{12} -\vq .
\end{align}
the centered overlap vector, where $\vq$ is the unique solution to~\eqref{syseq}. The following two theorems are about the overlap concentration. Here we use $\nu(\cdot) = \E \la \cdot \ra$ to denote expectation w.r.t.~the disorder and the Gibbs measure, one can always check the notation in Section~\ref{notation}.

\begin{thm}[Overlap concentration for general $h$]\label{thm1}
	Assume that $\gb < \gb_0:={\gb_c}/{\sqrt{4\ga}}$, where $\ga =\ga (\gD^{2}):= 1+\ind\{\gD^2 \text{ is indefinite}\}$. For $2\gh<\gb_c^2-4\ga \gb^2 $, we have
	\begin{align*}
		\gn(\exp(\gh N\cP(\ovR_{12})))\le \det(I- (2\gh+4\ga \gb^2 ) \cV )^{-1/2}
	\end{align*}
	where
	\begin{align*}
		\cP(\vx):= \vx^{\T}\gL^{\half}\cV\gL^{\half}\vx.
	\end{align*}
\end{thm}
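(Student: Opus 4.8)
The plan is to follow Latała's exponential-concentration strategy for the SK model (as in \cite{Tal11a}*{Chapter 1}), but carried out at the level of the quadratic form $\cP(\ovR_{12})$ adapted to the species structure. Introduce an interpolation parameter $t \in [0,1]$ and the Gaussian Hamiltonian with disorder scaled by $\sqrt{t}$, and set
\[
  \varphi(t) := \gn_t\bigl(\exp(\gh N \cP(\ovR_{12}))\bigr),
\]
where $\gn_t$ is the corresponding (disorder plus Gibbs) average and $\ovR_{12} = \vR_{12} - \vq$ with $\vq$ the unique solution from Theorem~\ref{thm0} (available since $\gb<\gb_0 \le \gb_c$). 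At $t=0$ the overlap components decouple across sites and a direct computation with the one-site measure shows $\varphi(0) \le 1$ (this is where the centering by $\vq$, i.e. the defining property $q_s = \E\tanh^2(\cdots)$, is used). The goal is then a differential inequality $\varphi'(t) \le C\,\varphi(t)$ with $C$ controlled by the eigenvalues of $\cV$, from which $\varphi(1) \le e^{C}$ gives the stated determinant bound after summing the resulting geometric-type series.

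The key step is differentiating $\varphi(t)$ in $t$. Gaussian integration by parts turns $\frac{d}{dt}$ into a sum, over pairs of replicas, of terms proportional to the covariance of the Hamiltonians, which for the MSK model is $\tfrac{\gb^2}{2} N\, (\vR_{12})^{\T}\gL\gD^2\gL \vR_{12}$ up to lower-order corrections; re-centering $\vR_{12}=\ovR_{12}+\vq$ and using $q_s = \E\tanh^2(\cdots)$ to cancel the linear-in-$\ovR$ pieces, one is left with a quadratic form $\gb^2 N \ovR^{\T}\gL\gD^2\gL\ovR$ in the exponent's $t$-derivative, plus the differentiated $\gh N \cP(\ovR_{12})$ term. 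The point of the matrix $\cV = |\gL^{1/2}\gD^2\gL^{1/2}|$ is exactly that $\ovR^{\T}\gL\gD^2\gL\ovR \le \ovR^{\T}\gL^{1/2}\cV\gL^{1/2}\ovR = \cP(\ovR)$ (and for \indf~$\gD^2$ a further factor of $2$ is lost when one must also dominate $-\ovR^{\T}\gL\gD^2\gL\ovR$ by $\cP(\ovR)$, which is the source of the constant $\ga = 1 + \ind\{\gD^2\text{ indefinite}\}$). So all the $t$-derivative terms are bounded by a multiple of $\cP(\ovR_{12})$ in the exponent; one then linearizes $e^{x} \approx 1 + x$ only where permissible, or more cleanly uses the standard trick of comparing $\gn_t(\exp(\gh N\cP(\ovR_{12})))$ with $\gn_t(\exp((\gh+\text{shift}) N\cP(\ovR_{12})))$ and absorbing the extra mass. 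Diagonalizing $\cV$ reduces the bookkeeping to $m$ scalar inequalities of the one-dimensional type $\E e^{a g^2/2} = (1-a)^{-1/2}$ for $a < 1$, and the product over eigenvalues of $(2\gh+4\ga\gb^2)\cV$ produces the determinant.

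The main obstacle I expect is the control of the ``error'' terms generated by Gaussian integration by parts that are not of the clean quadratic form — in particular the self-overlap terms $R_{11}^{(s)}=1$ are harmless but the cross terms mixing three or four replicas, and the need to keep the cavity-field/effective-field expansions under control uniformly in $N$, require care so that the differential inequality closes with the sharp constant $4\ga\gb^2$ rather than something larger. A secondary technical point is ensuring $\varphi(t)$ and its derivative are finite along the whole interpolation (a priori integrability), which is handled by the hypothesis $2\gh < \gb_c^2 - 4\ga\gb^2$ guaranteeing $(2\gh + 4\ga\gb^2)\rho(\cV) < 1$, i.e. $(2\gh+4\ga\gb^2)\cV \prec I$, so that all the Gaussian moment generating functions invoked are finite; one typically first proves the bound for $\gh$ in a slightly smaller range and then extends by a limiting/continuity argument. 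Finally, the passage from the $t$-derivative estimate to $\varphi(1)$ uses Grönwall together with $\varphi(0)\le 1$, and the resulting exponential constant is matched to $\det(I-(2\gh+4\ga\gb^2)\cV)^{-1/2}$ by the elementary identity $-\tfrac12\log\det(I-A) = \tfrac12\tr\sum_{k\ge1} A^k/k$ for $A = (2\gh+4\ga\gb^2)\cV \prec I$.
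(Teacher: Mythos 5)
Your overall plan — Lat\l{}a-style interpolation, control of the $t$-derivative via the quadratic form $\cP$ adapted to $\cV = |\gL^{1/2}\gD^2\gL^{1/2}|$, and a careful evaluation at $t=0$ — is the right one and matches the paper's Section~2 in spirit. However, there are three concrete errors in the way you propose to close the argument, and as written the proof does not go through.

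First, the claim that $\varphi(0)\le 1$ is false. Since $\cV$ is positive semi-definite, $\cP(\ovR_{12})\ge 0$ pointwise, hence $\exp(\gh N\cP(\ovR_{12}))\ge 1$ and $\varphi(0)=\gn_0(\exp(\gh N\cP(\ovR_{12})))\ge 1$ always, with equality only in degenerate cases. The actual $t=0$ bound is the content of Lemma~\ref{concen of decouple} in the paper: $\gn_0(\exp(\gk N\cP(\ovR_{12})))\le \det(I-2\gk\cV)^{-1/2}$, proved by introducing an auxiliary Gaussian vector $\mvu\sim\N(0,2\gk\gL^{1/2}\cV\gL^{1/2})$ to linearize the quadratic exponent, using the decoupled one-site bound $\gn_0(\exp(u(\gs_i^1\gs_i^2-q)))\le \exp(u^2/2)$, and then computing the Gaussian MGF of a quadratic form. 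The determinant in the statement of Theorem~\ref{thm1} comes entirely from this $t=0$ evaluation, not from ``summing a geometric-type series'' out of a Gr\"onwall constant.

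Second, the differential inequality $\varphi'(t)\le C\varphi(t)$ with a fixed constant $C$ cannot hold in the form you need: differentiating $\gn_t(\exp(\gh N\cP(\ovR_{12})))$ via Gaussian integration by parts and then Lemma~\ref{holder lem} gives an upper bound of the type $2\ga N\gb^2\,\gn_t\bigl(\cP(\ovR_{12})\exp(\gh N\cP(\ovR_{12}))\bigr)$, which carries an extra factor $N\cP(\ovR_{12})$ relative to $\varphi(t)$; this is not $O(1)\cdot\varphi(t)$, so Gr\"onwall does not close. The device that does close the loop is the $t$-dependent exponent that you mention only in passing as an alternative: one shows (Corollary~\ref{monotone}) that $t\mapsto\gn_t\bigl(\exp((\gr-2\ga t\gb^2)N\cP(\ovR_{12}))\bigr)$ is non-increasing, because the explicit $t$-derivative of the shrinking exponent produces exactly the negative of the upper bound on $\gn_t'$. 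Evaluating at $t=1$ with $\gr=\gh+2\ga\gb^2$ and then using the $t=0$ lemma gives the claimed bound, and also explains the specific constant $2\gh+4\ga\gb^2$ in the determinant. You should make this the main line of the argument, not a footnote.

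Third, a smaller but real point: the factor $\ga=1+\ind\{\gD^2\text{ indefinite}\}$ does not arise from having to dominate $-\ovR^\T\gL\gD^2\gL\ovR$ by $\cP(\ovR)$ term by term. It arises because, in the expansion of $\gn_t'(f)$ for $f=\exp(\gk N\cP(\ovR_{12}))$ (equation~\eqref{derivative}), the middle terms $-2\sum_{l\le 2}\gn_t(\cQ(\ovR_{l,3})f)$ are nonpositive and can simply be dropped from an upper bound when $\gD^2$ is positive semi-definite (then $\cQ\ge 0$), leaving only two terms bounded by $\gn_t(\cP\cdot f)$; when $\gD^2$ is indefinite those middle terms can have either sign, so all four must be bounded in absolute value via Lemma~\ref{holder lem}, doubling the constant. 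You should also state the H\"older/replica-symmetry lemma (the analogue of Lemma~\ref{holder lem}) explicitly, since it is what justifies replacing each $\gn_t(\cQ(\ovR_{\ell\ell'})\cdot e^{\gk N\cP})$ by $\gn_t(\cP(\ovR_{12})\cdot e^{\gk N\cP})$ uniformly over the replica indices $\ell,\ell'$.
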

Theorem~\ref{thm1} says that the overlap vector $\vR_{12}$ concentrates around $\vq$ when $\gb< \gb_0$ for all $h \ge 0$, and its proof is given in Section~\ref{sec2: latala} by adapting Latala's argument. Depending on the definiteness of $\gD^2$, we obtained two different RS regimes. The RS regime for \indf~$\gD^2$ has an extra factor because the control of derivative of interpolated Gibbs average in the proof is somewhat crude. To get a sharper bound, one needs to work much harder, even unknown in SK. However, the next theorem tells us that if $h=0$, one can prove the concentration of overlap up to $\gb_c$, which is true even for \indf~$\gD^2$.
\begin{thm}[Overlap Concentration for $h=0$]\label{ovp no h}
	If $h=0, \gb < \gb_c$, we have
	\begin{align}
		\nu\left(\exp\bigl(\frac18 (\gb_c^2-\gb^2)N \cP(\vR_{12})\bigr)\right) \le K
	\end{align}
	for some constant $K<\infty$ that does not depend on $N$.
\end{thm}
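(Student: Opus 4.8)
The plan is to prove overlap concentration at $h=0$ via the quadratic-coupling (or "two-replica interpolation") argument in the spirit of Talagrand's treatment of the SK model at zero external field, adapted species-wise. The key structural fact one exploits is that at $h=0$ the unique solution of \eqref{syseq} is $\vq=\vzero$ (since $\E\tanh^2(\gb\eta\sqrt{x})=0$ only when $x=0$, and Theorem~\ref{thm0} gives uniqueness below $\gb_c$), so $\ovR_{12}=\vR_{12}$ and we must show $\vR_{12}$ concentrates at the origin. First I would set up a two-replica system with an interpolating Hamiltonian: for $t\in[0,1]$, couple the two replicas $\mvgs^1,\mvgs^2$ through a $t$-fraction of the original disorder together with an added quadratic self-coupling term proportional to $(1-t)$ times $N\,\cP(\vR_{12})$ built from the species-weighted bilinear form $\vx^{\T}\gL^{\half}\cV\gL^{\half}\vx$. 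Equivalently, define
\begin{align*}
	\varphi(t):=\nu_t\!\left(\exp\!\bigl(\tfrac{c}{2}(\gb_c^2-\gb^2)\,t\,N\,\cP(\vR_{12})\bigr)\right)
\end{align*}
for a suitable small constant $c$, where $\nu_t$ is the Gibbs average under the interpolated measure, and show $\varphi$ satisfies a differential inequality $\varphi'(t)\le (\text{small})\cdot\varphi(t)$ with $\varphi(0)$ bounded.

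The heart of the computation is differentiating in $t$ and applying Gaussian integration by parts to the disorder part. This produces, as usual, terms comparing the quadratic form in the overlap of replicas $1,2$ against the overlaps of the additional "copy" replicas generated by the IBP; the crucial algebraic identity is that the covariance of the MSK Hamiltonian evaluated at two configurations is $\tfrac{\gb^2 N}{2}\,\vR_{12}^{\T}\gL\gD^2\gL\vR_{12}$ up to lower-order, so that the natural quadratic form appearing is exactly $\vx\mapsto\vx^{\T}\gL\gD^2\gL\vx$. The passage to $\cV=|\gL^{\half}\gD^2\gL^{\half}|$ and $\cP$ is the device that handles the \indf~case: one writes $\gL^{\half}\gD^2\gL^{\half}=\cV_+-\cV_-$ with $\cV_\pm\succeq0$ and $\cV=\cV_++\cV_-$, bounds the "bad-sign" part $-\vx^{\T}\cV_-\vx\le 0$ trivially, and controls the good part by $\cP$. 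The spectral-radius threshold enters because $\rho(\gL^{\half}\gD^2\gL^{\half})=\rho(\gD^2\gL)=\gb_c^{-2}$, so $\gb^2\cV\preceq \gb^2\gb_c^{-2} I$ and the self-coupling coefficient can be chosen so the net drift in $\varphi'(t)/\varphi(t)$ is $\le 0$ precisely when $\gb<\gb_c$.

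After the interpolation bound, I would close the argument by Gaussian concentration / a symmetrization step: the derivative identity typically yields something like $\varphi'(t)\le A\,\nu_t(\cP(\vR_{12})\exp(\cdots))$, which one absorbs by noting $\cP(\vR_{12})\le C$ deterministically (the overlaps lie in $[-1,1]$) to get a crude bound, then bootstraps — or, more cleanly, one chooses the constant $c=\tfrac14$ so that the quadratic self-coupling term dominates the linear-in-$\cP$ error term coming from IBP, leaving $\varphi'(t)\le 0$ and hence $\varphi(1)\le\varphi(0)\le K$ with $K$ the value of the Gaussian integral $\det(I-\tfrac14(\gb_c^2-\gb^2)\cV)^{-1/2}$-type expression at $t=0$, which is finite and $N$-independent since $\tfrac14(\gb_c^2-\gb^2)\rho(\cV)<1$. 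The main obstacle I anticipate is the bookkeeping in the IBP step when $\gD^2$ is indefinite: unlike the \pd~case one cannot simply declare the quadratic form negative, so one must carefully track the sign-split $\cV_+-\cV_-$ through all the cross terms (including the "replica $3$" terms produced by differentiating the exponential weight itself) and verify that every negative-semidefinite contribution genuinely helps rather than needing to be bounded the wrong way — this is exactly the step where the factor $\tfrac18$ rather than a larger constant, and the appearance of $\gb_c^2-\gb^2$ rather than something like $\gb_c^2-2\gb^2$, gets pinned down.
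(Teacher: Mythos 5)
The approach you describe is Latala's interpolation/differential-inequality method applied to the normalized Gibbs average $\nu_t(\cdot)$, which is precisely Theorem~\ref{thm1}'s argument. That method has an intrinsic factor-of-two loss that prevents it from reaching $\gb_c$: after Gaussian integration by parts, the derivative $\nu_t'(f)$ in~\eqref{derivative} contains the term $+3\nu_t(\cQ(\ovR_{34})f)$ which, after the H\"older symmetrization step, contributes $+3\nu_t(\cP(\ovR_{12})f)$ even in the most favorable sign-split accounting, so the net drift is at best $2\gb^{2}N\nu_t(\cP\exp(\cdots))$ and the self-coupling rate needed to kill it exceeds what the initial condition $\nu_0(\exp(\gr N\cP))\le\det(I-2\gr\cV)^{-1/2}$ can tolerate (you need $\gr>2\gb^{2}$ and $\gr<\gb_c^{2}/2$, hence $\gb<\gb_c/2$). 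Your $\cV_+-\cV_-$ splitting does at best remove the extra factor of $\sqrt2$ penalizing the indefinite case, recovering $\alpha=1$; it cannot close the remaining gap from $\gb_c/2$ to $\gb_c$. This is the same barrier as in the classical SK model, where Latala's argument gives $\gb<1/2$ but the $h=0$ concentration up to $\gb<1$ requires a different method. Also note your $t$-parameterization is reversed: the exponent in $\nu_t(\exp(\rho(t)N\cP))$ must be \emph{decreasing} in $t$ (as in Corollary~\ref{monotone}, $\rho(t)=\gr-2\ga t\gb^2$) so that $\rho'(t)$ cancels the positive drift from $\nu_t'$.

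The paper takes a genuinely different route. It never interpolates the normalized average. Instead: (a) Lemma~\ref{use-lem} bounds the \emph{unnormalized} second moment $\E(Z_N^2\langle\exp(\gamma N\cP(\vR_{12})/2)\rangle)\le(\E Z_N)^2\det(I-(\gb^2+\gamma)\cV)^{-1/2}$ by completing the Gaussian square directly — the only use of the indefinite structure is the one-line inequality $\cQ(\vR_{12})\le\cP(\vR_{12})$, with no $\cV_{\pm}$ bookkeeping; (b) Theorem~\ref{lower devat} gives a Gaussian lower-tail bound for $\log Z_N$ via the ``distance to a good set'' concentration inequality (Lemma~\ref{lemm-talagrand}), where the good set is the one supplied by Lemma~\ref{lemm1-h0} and the Lipschitz control along the Gaussian disorder uses the identity in Lemma~\ref{lemm2-h0}; (c) two applications of Cauchy--Schwarz convert $\nu(\exp(\tfrac18(\gb_c^2-\gb^2)N\cP))$ into the product of $(\E(\E Z_N/Z_N)^2)^{1/2}$ (controlled by step (b)) and the second moment from step (a). The coefficient $\tfrac18(\gb_c^2-\gb^2)$ comes from choosing $\gamma=\tfrac12(\gb_c^2-\gb^2)$ in Lemma~\ref{use-lem} and then paying a factor $\tfrac12$ in each Cauchy--Schwarz, not from any interpolation/IBP cancellation. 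Your proposal as written does not supply any replacement for the lower-tail bound on $\log Z_N$, which is the crucial ingredient that lets one divide out $Z_N^2$, so the argument has a genuine gap beyond just the suboptimal temperature threshold.
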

The proof of Theorem~\ref{ovp no h} is given in Section~\ref{concen-h0} and is based on a quadratic coupling argument~\cites{GT02b,Tal11b}. This theorem is expected to give the whole RS regime when $h=0$ (by comparing with the classical SK model), even for \indf~$\gD^2$.

With the control of overlap, we also prove the following.
\begin{thm}[LLN and CLT for free energy]\label{RS solution}
	If $\gb < \gb_0, h\ge 0$, then
	\begin{align}\label{sol}
		\abs{F_N - \RS(\gb,h)} \le \frac K N ,
	\end{align}
	where $K$ is a constant that does not depend on $N$. Moreover, for $h>0$, we have
	\begin{align}\label{clt}
		N^{\half}\left( F_N - \RS(\gb,h) \right) \Rightarrow \N(0,b(\gb,h)) \quad \text{as} \quad N \to \infty,
	\end{align}
	where
	\begin{align*}
		\RS(\gb,h) & = \log 2 + \sum_{s=1}^m\gl_s \E \log \cosh(\gb \eta \sqrt{(\gD^2\gL \vq)_s} +h) + \frac{\gb^2}{4}\cQ(\vone -\vq), \\
		b(\gb,h)   & :=\sum_{s=1}^m \gl_{s}\var(\log\cosh(\gb \eta \sqrt{(\gD^2\gL \vq)_s}+h)) - \frac{\gb^2}2 \cQ(\vq).
	\end{align*}
	and $\cQ(\vx) := \vx^{\T}\gL\gD^2\gL \vx$ denotes the quadratic form of $\gL \gD^2\gL$.
\end{thm}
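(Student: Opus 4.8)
The plan is to run a standard interpolation (cavity-in-$\beta$, or equivalently Guerra–Toninelli style) argument, using the overlap concentration from Theorem~\ref{thm1} to control the error terms. First I would write $F_N - \E F_N$ as a martingale difference sum by revealing the disorder variables $g_{ij}$ one at a time (McDiarmid/Azuma is too lossy for the CLT, so instead I would use the Gaussian structure directly). Concretely, introduce the interpolating Hamiltonian $H_{N,t}$ that attaches a parameter $t\in[0,1]$ to the disorder and a compensating term built from $\vq$, so that at $t=1$ we recover $H_N$ and at $t=0$ we get a decoupled (one-body) model whose free energy is exactly $\RS(\gb,h)$ up to the quadratic correction $\frac{\gb^2}{4}\cQ(\vone-\vq)$. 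Differentiating $\frac1N\E\log Z_{N,t}$ in $t$ and applying Gaussian integration by parts produces a term proportional to $\gn_t(\cQ(\ovR_{12}))$ plus the self-overlap contribution; the first is $O(1/N)$ by Theorem~\ref{thm1} (exponential concentration gives $\gn(\cP(\ovR_{12})) = O(1/N)$, and $\cQ$ is comparable to $\cP$ since both are quadratic forms controlled by the spectrum of $\gL^{1/2}\gD^2\gL^{1/2}$), while the diagonal term is $O(1/N)$ deterministically. Integrating over $t$ yields~\eqref{sol}. A minor technical point: one must check $\vq$ is indeed the stationary point that makes the linear-in-$t$ term vanish, which is exactly the defining relation~\eqref{syseq}, and that $\RS(\gb,h)$ equals $\sP_{\RS}(\vq)$ at that point — this is a direct substitution.

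For the CLT~\eqref{clt} I would use the standard approach of decomposing $N^{1/2}(F_N - \E F_N)$ into a sum of conditionally independent increments. The cleanest route: write $F_N = \frac1N\log Z_N$ and use the cavity method removing the last spin of each species, or — simpler for the CLT — write $N(F_N - \E F_N) = \sum_{k} \E[\,\cdot\mid \mathcal F_k] - \E[\,\cdot\mid\mathcal F_{k-1}]$ along a filtration that reveals the Gaussian disorder in blocks, and show the increments are asymptotically those of the corresponding one-body model. The key reduction is that, by the concentration Theorem~\ref{thm1}, the free energy $F_N$ is close (in $L^2$, after centering and scaling by $N^{1/2}$) to the free energy of the decoupled model $\frac1N\sum_{i} \log\cosh(h + \gb\,\xi_i)$ where $\xi_i$ are the appropriate Gaussian cavity fields with variance $(\gD^2\gL\vq)_{s(i)}$ for $i\in I_s$, minus the quadratic term $\frac{\gb^2}{4}\cQ(\vone-\vq)$ which is deterministic and contributes nothing to the fluctuations. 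The decoupled sum is a sum of $|I_s|$ i.i.d.\ terms per species, so the classical CLT gives variance $\sum_s \gl_s\var(\log\cosh(\gb\eta\sqrt{(\gD^2\gL\vq)_s}+h))$; the correction $-\frac{\gb^2}{2}\cQ(\vq)$ in $b(\gb,h)$ comes from the covariance between the cavity fields across sites induced by the shared disorder, which one extracts by a second-order expansion of the interpolation (the $t$-derivative computation carried to the level of the variance, i.e.\ tracking $\cov$ of the increments rather than just their mean).

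The main obstacle I expect is making the $L^2$-closeness in the CLT step quantitative enough: I need $N^{1/2}(F_N - F_N^{\mathrm{dec}})\to 0$ in probability (or in $L^1$), which requires the error in the interpolation to be $o(N^{-1})$ in expectation after the $t$-integration, not merely $O(N^{-1})$. This is where Theorem~\ref{thm1} must be used carefully: the exponential moment bound $\gn(\exp(\gh N\cP(\ovR_{12})))\le\det(\cdots)^{-1/2}$ gives $\gn(\cP(\ovR_{12}))\le C/N$ and indeed sub-exponential tails, so $\gn(\cP(\ovR_{12})^2) = O(1/N^2)$, which is enough — but one has to be attentive that the interpolated measures $\gn_t$ for $t\in[0,1]$ all satisfy the same bound (they do, since $H_{N,t}$ is again an MSK-type Hamiltonian with disorder strength $\le\gb$, so Theorem~\ref{thm1} applies uniformly in $t$), and that the cross terms between the ``fluctuation of the decoupled part'' and the ``interpolation remainder'' are controlled by Cauchy–Schwarz. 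A secondary subtlety is the identification of $b(\gb,h)$: one must verify that $b(\gb,h)\ge 0$ in the regime $\gb<\gb_0$ (otherwise the statement is vacuous), which should follow from the same spectral bound that underlies $\gb_0$, but deserves an explicit remark. The restriction $h>0$ for the CLT is presumably needed to keep $\vq$ in the interior and the variance nondegenerate; for $h=0$ one expects a different (possibly degenerate or non-Gaussian) limit, so I would not attempt that case here.
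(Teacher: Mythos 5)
Your argument for the law of large numbers~\eqref{sol} is essentially the paper's: interpolate between the decoupled one-body model and $H_N$, express $\phi'(t)$ via Gaussian integration by parts as $\frac{\gb^2}{4}\bigl(\cQ(\vone-\vq)-\gn_t(\cQ(\ovR_{12}))\bigr)$, invoke Theorem~\ref{thm1} to bound $\gn_t(\cQ(\ovR_{12}))$ by $K/N$, and integrate in $t$. Nothing to flag there.

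The CLT step, however, has a genuine gap, and it is not the sharpening-of-rates issue you flag. Your ``key reduction'' asserts that $N^{1/2}\bigl(F_N - F_N^{\mathrm{dec}}\bigr)\to 0$ in $L^2$, where $F_N^{\mathrm{dec}}$ is the decoupled one-body free energy with i.i.d.\ cavity fields. This is false as stated. The one-body model has asymptotic variance $c^2 := \sum_s\gl_s\var\bigl(\log\cosh(\gb\eta\sqrt{(\gD^2\gL\vq)_s}+h)\bigr)$, while the claimed limit has variance $b(\gb,h)=c^2-\tfrac{\gb^2}{2}\cQ(\vq)$, which is strictly smaller when $h>0$. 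Moreover, in Guerra's interpolation $Z_{N,1}$ and $Z_{N,0}$ are built from independent Gaussian fields $u_{\mvgs}$ and $v_{\mvgs}$, so $\var\bigl(N^{1/2}(F_N-F_N^{\mathrm{dec}})\bigr)\to b+c^2\neq 0$; no coupling compatible with the interpolation can make the scaled difference vanish. Your proposed resolution --- that the $-\tfrac{\gb^2}{2}\cQ(\vq)$ correction ``comes from the covariance between the cavity fields across sites induced by the shared disorder'' --- contradicts your own setup, since you explicitly take the $\xi_i$ to be i.i.d.\ within each species, which is what let you invoke the classical CLT a sentence earlier. You cannot have both an i.i.d.\ surrogate with variance $c^2$ and $L^2$-closeness to a quantity with strictly smaller variance $b$.

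The paper never claims closeness of the two free energies; it tracks the characteristic function $m(t,\theta)=\E e^{\i\theta X_t}$ along the interpolation (Lemma~\ref{asy-lem1}). The $t$-derivative of $\E g(X_t)$ contains a term $g''(X_t)\cdot\bigl(-\cQ(\vq)\bigr)$ coming from the diagonal self-overlap part of the Gaussian covariance $U(\mvgs,\mvgs)$; for $g(x)=e^{\i\theta x}$ this produces a drift $v^2\theta^2\, m(t,\theta)$ with $v^2=\gb^2\cQ(\vq)/4$, while all the overlap-dependent terms are controlled by Theorem~\ref{thm1} at order $O(|\theta|/\sqrt N+\theta^2/N)$. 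Integrating the resulting differential inequality from $0$ to $1$ gives $m(1,\theta)\to m(0,\theta)\,e^{v^2\theta^2}$, so the variance drops deterministically from $c^2$ at $t=0$ to $c^2-2v^2=b(\gb,h)$ at $t=1$. The correction is an $O(1)$ shift in variance accumulated along the path, not an error term one can drive to $o(N^{-1})$. Any martingale or block-decomposition scheme would have to reproduce exactly this variance evolution rather than approximate $F_N$ by $F_N^{\mathrm{dec}}$. Your side remark that one should verify $b(\gb,h)\ge 0$ in the regime $\gb<\gb_0$ is legitimate and is not addressed explicitly in the paper.
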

The first part of Theorem~\ref{RS solution} gives the RS solution of MSK model for general $\gD^2$. It's easy to check that $\text{RS}(\gb,h) $ here has the same form as $\text{RS}(\gb,h)$ in~\eqref{RS} derived from the Parisi formula in the \pd\ $\gD^{2}$ case. However, in the bipartite case the critical points in $\cC(\gb,h)$ are saddle points, so the formula in~\eqref{par} is not true anymore. However, a modified Parisi formula is likely to be true for $\gD^2$ \indf~at least in the RS regime (see Example~\ref{ex:RSB} for further discussion). The second part is a Central Limit Theorem for the free energy, which holds at the corresponding RS regime depending on $\gD^2$. The proof of Theorem~\ref{RS solution} is in Section~\ref{clt-free-engy}. For the CLT of free energy in classical SK model, see~\cites{GT02,ALR87,Tin05,CN95} and references therein.

\begin{rem}
	The proof of CLT for the free energy in the $\gb<\gb_c, h=0$ case, is similar to the classical SK model, see~\cite{ALR87} and~\cite{Tal11b}*{Chapter 11}. In~\cite{ALR87}, the authors proved the CLT using cluster expansion approach and later~\cite{Tal11b} reproved it using moment method. Both the cluster expansion approach and moment method are highly nontrivial for MSK model. However, the characteristic function approach we used in our paper, can be generalize for  the MSK model with $\gb<\gb_{c}, h=0$.  Since we already have a good control of overlap in Theorem~\ref{ovp no h}, the next question is to determine the asymptotic mean and variance of free energy. By the species-wise cavity approach developed in Section~\ref{sec:cavity} and Section~\ref{sec:varoverlap}, we evaluate
	\begin{align}
		c_N(\gb) & := N\left(\log 2 +\frac 1 4 \gb^2 \cQ(\vone)\right) + \frac{1}{4} \log \det(I-\gb^2\gD^2\gL), \\
		b(\gb)   & :=\frac 1 2 \left(-\log \det(I-\gb^2\gD^2\gL)-\gb^2\tr(\gD^2\gL)\right).
	\end{align}
	Then $\log Z_N(\gb) - c_N(\gb) \Rightarrow \N(0,b(\gb))$ as $N \to \infty$ can be carried over similarly as in~\cite{Tal11b}*{Chapter 11}, but we do not pursue that here.
\end{rem}

Next, we generalize the classical cavity approach to a species-wise cavity method to analyze the fluctuation of the overlap vector. We choose a species $s \in \{1,2,\ldots,m\}$ first with probability ${\gl_s}$, then select a spin uniformly in that species to be decoupled and finally compare this decoupled system with the original one. The complete description is given in Section~\ref{sec:cavity}. In~\cite{Pan15}, Panchenko introduced another cavity idea when using the Aizenman--Sims--Starr scheme to prove the lower bound of the Parisi formula. It adds/decouples $k>1$ spins simultaneously, and those $k$ spins are distributed into $m$ species according to the ratios stored in $\gL$. It might be possible to do the second-moment computation along with this cavity idea, but it will be more technically challenging since one needs to control $k>1$ spins.

For $(\gb,h)$ in the RS region proved in Theorem~\ref{thm1} and~\ref{ovp no h} with $\vq$ being the unique solution to~\eqref{syseq}, define
\[
	\hat{q}_{s}:=\E\tanh^{4}(\gb\eta\sqrt{(\gD^{2}\gL\vq)_{s}}+h),\qquad s=1,2,\ldots,m.
\]
Define the $m\times m$ diagonal matrices $\gC, \gC', \gC''$, whose $s$-th diagonal entries are respectively given by
\begin{align*}
	\gc_s = 1-2q_{s}+\hat{q}_s          & =\E\sech^{4}(\gb\eta\sqrt{(\gD^{2}\gL\vq)_{s}}+h), \\
	\gc'_s = 1 -4q_{s}+3\hat{q}_s\qquad & \text{and} \qquad
	\gc''_s = 2q_s +q_s^2 -3\hat{q}_s \qquad \text{ for } s=1,2,\ldots,m.
\end{align*}
Note that $-1/3\le \gc_{s}'\le \gc_{s}\le 1$ but $\gc'_{s}$ can be negative. For large $h$, one can easily check that $\gc_{s}' < -\gc_{s}$. This fact will be used when we discuss the AT line condition in Example~\ref{ex:RSB}. Let
\begin{align}\label{Ui}
	U(0):=\nu(\ovR_{12}\ovR_{34}^{\T}),\quad U(1):=\nu(\ovR_{12}\ovR_{13}^{\T})\text{ and }\quad U(2):=\nu(\ovR_{12}\ovR_{12}^{\T})
\end{align}
denote the variance-covariance matrices of the overlap vectors. Note that, all the matrices $U(i)$'s are \pd. Using the generalized cavity method we prove the following.

\begin{thm}\label{thm:lyap}
	For $\gb< \gb_{0}$ or $\{\gb<\gb_{c}, h=0\}$, the matrices
	\begin{align*}
		\hat{U}(0) & := -3U(0)+2U(1),     \\
		\hat{U}(1) & := 3U(0)-4U(1)+U(2), \\
		\hat{U}(2) & := U(0)-2U(1)+U(2)
	\end{align*}
	satisfy the following equations
	\begin{align}\label{lyap}
		\begin{split}
			\sym\bigl( \bigl(I-\gb^2\gC\gD^{2}\gL\bigr) \cdot \hat{U}(2)\bigr) &= \frac1N\cdot \gC\gL^{-1} + \mfR,\\
			\sym\bigl( \bigl(I-\gb^2\gC'\gD^{2}\gL\bigr) \cdot \hat{U}(1)\bigr) &= \frac1N\cdot \gC'\gL^{-1} + \mfR,\\
			\text{ and }  \sym\bigl( \bigl(I-\gb^2\gC'\gD^{2}\gL\bigr) \cdot \hat{U}(0)\bigr) &= \gb^{2} \sym\bigl( \gC''\gD^{2}\gL\cdot \hat{U}(1)\bigr) + \frac1N\cdot \gC''\gL^{-1} + \mfR,
		\end{split}
	\end{align}
	where $\sym(A)=(A+A^{\T})/2$ and $\mfR$ is some $m\times m$ symmetric matrix with
	\[ \max_{1\le p,q\le m}|\mfR_{p,q}|\le K\biggl(N^{-3/2} + \sum_{s=1}^{m}\nu(\abs{R_{12}^s-q_s}^3)\biggr)\]
	for some constant $K$ independent of $N$.
\end{thm}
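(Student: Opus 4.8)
The plan is to derive the three identities in \eqref{lyap} by applying the species-wise cavity method of Section~\ref{sec:cavity} to suitably chosen functions of several replicas, following the classical scheme of \cite{Tal11b}*{Chapter~13} adapted to the multi-species setting. First I would set up the notation: fix the distinguished species $s$, decouple one spin of that species (say spin $N_s \in I_s$), and let $\nu_t$ denote the interpolated measure obtained by scaling the last-spin cavity field by $\sqrt t$, so that $\nu_0$ is a product over the decoupled spin and $\nu_1 = \nu$. The basic cavity identity expresses $\nu(f) = \nu_0(f) + \int_0^1 \nu_t'(f)\,dt$, where $\nu_t'(f)$ is a sum of terms each carrying a factor $\gb^2$ times (a diagonal entry of) $\gD^2\gL$, weighted by Gibbs averages of products of the last-spin variables $\gs^\ell_{N_s}$ against $f$; the weights that survive after averaging over the cavity Gaussian are exactly the coefficients $\gc_s, \gc'_s, \gc''_s$, which arise as $\E$ of $\sech^4, \sech^4$-type and the mixed $\tanh$-polynomials at the RS value $\vq$.

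The three equations correspond to three natural choices of $f$. For $\hat U(2)$ I would take $f = \bar R_{12}^{(p)}\bar R_{12}^{(q)}$ (equivalently the matrix $\ovR_{12}\ovR_{12}^\T$), for $\hat U(1)$ the function built from $\ovR_{12}\ovR_{13}^\T$, and for $\hat U(0)$ the function from $\ovR_{12}\ovR_{34}^\T$; the specific linear combinations $\hat U(i)$ of the $U(j)$ are precisely the ones that make the cavity-generated lower-order replica terms telescope. Running the cavity computation on each, one picks up on the left-hand side the operator $I - \gb^2 \gC^{(\bullet)}\gD^2\gL$ acting on $\hat U(\bullet)$ — symmetrized because only $\sym$ of the relevant product is measure-theoretically meaningful — on the right-hand side the ``diagonal'' contribution $\frac1N \gC^{(\bullet)}\gL^{-1}$ coming from the $\ell=\ell'$ (self-overlap with the decoupled spin) term, and, in the $\hat U(0)$ case only, the extra coupling term $\gb^2\sym(\gC''\gD^2\gL\cdot\hat U(1))$ produced because decoupling a spin shared between the pairs $(1,2)$ and $(3,4)$ forces a three-replica correction. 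Every remaining term is absorbed into the error matrix $\mfR$.

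The control of $\mfR$ is where the real work lies. Each discarded term is a cavity remainder of the schematic form $\gb^2 \cdot (\text{entry of }\gD^2\gL)\cdot \nu(\bar R_{\ell\ell'}^{(a)} \cdot (\text{cubic in overlaps}))$, or a Taylor remainder from replacing $\nu_t$ by $\nu_0$ at second order, or the discrepancy between $\nu_0$ and the fully-decoupled product measure; by Hölder's inequality and the fact that the weights are bounded, each is bounded by $K\sum_s \nu(|R_{12}^s - q_s|^3)$ up to an $O(N^{-3/2})$ term coming from the $1/N$ normalizations in the definition of $R^{(s)}$ together with the crude bound $\nu((\bar R^{(s)})^2) \le K/N$ available from Theorems~\ref{thm1}--\ref{ovp no h}. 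I would organize this as a lemma listing the (finitely many) remainder types and bounding each; the main obstacle is bookkeeping — tracking which Gaussian-integration-by-parts terms recombine into the coefficients $\gc_s, \gc'_s, \gc''_s$ versus which are genuinely lower order — rather than any deep estimate, since the exponential overlap concentration already in hand supplies all moment bounds needed. A final remark: invertibility of $\gL$ makes $\gL^{-1}$ meaningful, and positive-definiteness of the $U(i)$ (hence of the $\hat U(i)$, after checking the signs of the combining coefficients) is used only to guarantee the linear systems in \eqref{lyap} are non-degenerate, which is exploited downstream rather than here.
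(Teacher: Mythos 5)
Your high-level plan (species-wise cavity, second-moment computation, control of remainders via the overlap concentration from Theorems~\ref{thm1} and~\ref{ovp no h}) matches the paper's approach, and the remainder bookkeeping sketch is sound. But the decisive step is asserted rather than argued. Running the cavity computation on $\nu(\ovR_{12}^{\T} S\ovR_{12})$, $\nu(\ovR_{12}^{\T} S\ovR_{13})$, $\nu(\ovR_{12}^{\T} S\ovR_{34})$ does not directly produce the decoupled system~\eqref{lyap}; it produces a coupled $3\times3$ system
\begin{align*}
U(i) = \sum_{j,k=0}^{2} C_{ij}(k)\,\sym\bigl(\hat{B}(k)U(j)\bigr) + \frac1N\Theta(i) + \mfR, \qquad i=0,1,2,
\end{align*}
where the integer matrices $C(0),C(1),C(2)$ come from counting, for each pair of replicas $\{\ell,\ell'\}$ produced by the cavity derivative, how many times each overlap-type is hit, and $\hat{B}(k)=B(k)\gD^2\gL$ collects the $\tanh$-moments $b_s(k)$ of the decoupled spin. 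The combinations $\hat U(l)=\sum_i V_{li}U(i)$ are then not ``the ones that make the terms telescope'' in any a priori way: one must \emph{observe} that $C(0),C(1),C(2)$ commute, hence can be simultaneously upper-triangularized by a single matrix $V$, and it is this $V$ that defines both the $\hat U(l)$ and the diagonal factors $\gC,\gC',\gC''$ (via $\sum_i V_{li}\hat B(i)=\gb^2\gC^{(\bullet)}\gD^2\gL$ and $\sum_i V_{li}\Theta(i)=\gC^{(\bullet)}\gL^{-1}$). This commutativity is the structural heart of the theorem --- the paper explicitly flags it as related to an open problem of Talagrand on the algebraic structure of RS cavity equations --- and your proposal skips it.

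Two smaller corrections. The extra coupling term in the $\hat U(0)$ equation is not a ``three-replica correction from decoupling a shared spin''; it is the surviving off-diagonal $(0,1)$ entry of the triangularized matrices $T(k)=VC(k)V^{-1}$, an algebraic artifact with no direct combinatorial reading. And the closing remark is wrong on both counts: $\hat U(0)=-3U(0)+2U(1)$ has a negative coefficient, so positive-definiteness of the $U(i)$ does not transfer to the $\hat U(i)$; and non-degeneracy of~\eqref{lyap} is not a positive-definiteness issue at all but a stability issue (the matrices $\gb^2\gC\gD^2\gL-I$ and $\gb^2\gC'\gD^2\gL-I$ must have eigenvalues with strictly negative real part), which is what Theorem~\ref{thm:varoverlap}, not this theorem, is about. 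Finally, be explicit that the cavity operator averages the species choice: $\hat\nu=\nu\otimes\E^\gd$ picks $\gd=s$ with probability $\gl_s$ before decoupling the last spin of $I_s$, and it is this $\gl_s$-weighted average that reassembles the scalar cavity contributions into the matrix $\gL$ factors; fixing a single $s$ gives only one row of the system.
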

Unfortunately, we do not have a nice interpretation of the coefficients appearing in the system of linear equations in Theorem~\ref{thm:lyap}. This is actually related to an open problem by Talagrand, where he asked to identify the underlying algebraic structure (see~\cite[Research Problem 1.8.3]{Tal11a}).

Before we discuss the Theorem~\ref{thm:lyap}, let's recall the definition of stable matrices.
\begin{defn}\label{def:stable}
	Square matrix $A$ is \emph{stable} if all the eigenvalues of $A$ have strictly negative real part.
\end{defn}

Note that the linear equations in~\eqref{lyap} are examples of continuous Lyapunov equation of the form $AX+XA^{\T}+Q=0$ where $A, Q$ are symmetric matrices. Those equations also appear in control theory (see \eg~\cite{AM07book}). Moreover, the existence and uniqueness of the solution are equivalent to the matrix $A$ being stable. It is an exciting question to connect equations~\eqref{lyap} with an appropriate control problem. In our case, solving the system of equations given in~\eqref{lyap}, we get the asymptotic variance of overlap.

\begin{thm}[Asymptotic variance of overlap vector]\label{thm:varoverlap}
	For $\gb< \gb_{0}$ or $\{\gb<\gb_{c}, h=0\}$, we have
	\begin{align*}
		N \cdot U(i) \to \gS(i) \quad \text{as} \quad N \to \infty
	\end{align*}
	where $\gS(i), i=0,1,2$ satisfies the following
	\begin{align}\label{eq:solve-lya}
		\begin{split}
			\gS(0)-2\gS(1)+\gS(2) &= \gC(I - \gb^2\gD^{2}\gL\gC)^{-1}\gL^{-1},\\
			3\gS(0)-4\gS(1)+\gS(2) &= \gC'(I - \gb^2\gD^{2}\gL\gC')^{-1}\gL^{-1}.
		\end{split}
	\end{align}
\end{thm}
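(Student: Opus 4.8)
The plan is to combine Theorem~\ref{thm:lyap} with a stability argument. First, I would argue that $N\cdot U(i)$ is bounded: the overlap-concentration estimates of Theorem~\ref{thm1} (for $\gb<\gb_0$) and Theorem~\ref{ovp no h} (for $h=0$, $\gb<\gb_c$) give $\nu(\cP(\ovR_{12}))=O(1/N)$ and, since $\cP$ is a positive-definite quadratic form, $\nu(\abs{R_{12}^s-q_s}^2)=O(1/N)$ for each $s$; combined with the same exponential bounds applied to higher moments, $\nu(\abs{R_{12}^s-q_s}^3)=O(N^{-3/2})$, so the remainder $\mfR$ in~\eqref{lyap} is $O(N^{-3/2})$. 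Hence $N\cdot\mfR\to 0$, and the matrices $N\hat U(i)$ are bounded and satisfy, up to $o(1)$, the homogeneous-plus-source Lyapunov systems obtained by multiplying~\eqref{lyap} by $N$. Taking subsequential limits $N\hat U(i)\to\hat\gS(i)$, the limits satisfy
\begin{align*}
	\sym\bigl((I-\gb^2\gC\gD^2\gL)\hat\gS(2)\bigr) &= \gC\gL^{-1},\\
	\sym\bigl((I-\gb^2\gC'\gD^2\gL)\hat\gS(1)\bigr) &= \gC'\gL^{-1},\\
	\sym\bigl((I-\gb^2\gC'\gD^2\gL)\hat\gS(0)\bigr) &= \gb^2\sym\bigl(\gC''\gD^2\gL\,\hat\gS(1)\bigr)+\gC''\gL^{-1}.
\end{align*}

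Second, I would show these limiting equations have a \emph{unique} solution, which forces the full sequence to converge and identifies the limit. Each of the first two equations, after conjugating by $\gL^{-1/2}$ to symmetrize (writing $\widetilde\gS(i)=\gL^{1/2}\hat\gS(i)\gL^{1/2}$ and using that $\gL^{1/2}\gD^2\gL^{1/2}$ is symmetric), becomes a continuous Lyapunov equation $A X + X A^\T + Q = 0$ with $A = -(I-\gb^2\gL^{1/2}\gD^2\gL^{1/2}\gC^{\flat})$ where $\gC^\flat$ denotes the appropriately conjugated diagonal matrix, or more directly $A=-\sym$-part of the relevant operator. The standard theory (cited in the excerpt, \cite{AM07book}) gives existence and uniqueness precisely when $A$ is stable in the sense of Definition~\ref{def:stable}. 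So the crux is to verify stability of $I-\gb^2\gC\gD^2\gL$ and $I-\gb^2\gC'\gD^2\gL$. Since $0\le\gc_s\le 1$ and, for $\gb<\gb_c$, $\rho(\gb^2\gD^2\gL)=\gb^2/\gb_c^2<1$, the matrix $\gb^2\gC\gD^2\gL$ has spectral radius strictly less than $1$ (it is similar to the symmetric matrix $\gC^{1/2}\gL^{1/2}\gD^2\gL^{1/2}\gC^{1/2}$ scaled by $\gb^2$, whose norm is $\le\gb^2\rho(\gD^2\gL)\norm{\gC}<1$), so all eigenvalues of $I-\gb^2\gC\gD^2\gL$ have positive real part; the same for $\gC'$ once one checks $\gb^2\norm{\gC'}\rho(\gD^2\gL)<1$, using $-1/3\le\gc_s'\le 1$ and, in the regime $\gb<\gb_0=\gb_c/\sqrt{4\ga}$, the extra slack from the factor $\sqrt{4\ga}$. (For $h=0$, $\gc_s'=\gc_s$ and the bound is immediate from $\gb<\gb_c$.) Once uniqueness is established, I would solve the symmetric Lyapunov equations explicitly: since the source $\gC\gL^{-1}$ and the operator commute after symmetrization in the natural basis, one checks directly that $\hat\gS(2)$ given by the closed form $\gC(I-\gb^2\gD^2\gL\gC)^{-1}\gL^{-1}$ (symmetrized) solves the first equation — indeed $(I-\gb^2\gC\gD^2\gL)\gC(I-\gb^2\gD^2\gL\gC)^{-1}\gL^{-1}=\gC(I-\gb^2\gD^2\gL\gC)(I-\gb^2\gD^2\gL\gC)^{-1}\gL^{-1}=\gC\gL^{-1}$, which is already symmetric (as $\gC\gD^2\gL\gC$ conjugates to symmetric and $\gC\gL^{-1}$ is diagonal), so the $\sym$ is automatic. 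Likewise for $\hat\gS(1)$ with $\gC$ replaced by $\gC'$. Finally, from $\hat\gS(2)=\gS(2)-2\gS(1)+\gS(0)$, $\hat\gS(1)=3\gS(0)-4\gS(1)+\gS(2)$ (in the notation $\hat U\to\hat\gS$, $U\to\gS$), I read off exactly the two identities~\eqref{eq:solve-lya}.

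The main obstacle I anticipate is the stability verification for the $\gc'$-equation, because $\gc_s'$ can be negative (indeed $\gc_s'<-\gc_s$ for large $h$), so $\norm{\gC'}$ is controlled by $\abs{\gc_s'}$ which can exceed the naive bound; one must use the sharp range $-1/3\le\gc_s'\le 1$, giving $\norm{\gC'}\le 1$ still, together with $\gb^2\rho(\gD^2\gL)\le\gb^2/\gb_c^2$, and in the indefinite/general-$h$ case the safety margin comes entirely from $\gb<\gb_0$ rather than $\gb<\gb_c$. A secondary technical point is justifying that the remainder $\mfR$ is genuinely $o(1/N)$ after multiplication by $N$, i.e. that $\nu(\abs{R_{12}^s-q_s}^3)=o(N^{-1})$; this follows from Theorems~\ref{thm1}–\ref{ovp no h} by a standard tail-integration argument (the exponential moment bound on $\cP(\ovR_{12})$ controls all polynomial moments of $\abs{R_{12}^s-q_s}$ at the Gaussian scale $N^{-1/2}$), but it should be stated carefully. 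The rest — taking limits along subsequences, invoking uniqueness to upgrade to full convergence, and the linear-algebra bookkeeping converting the three $\hat\gS(i)$ equations into~\eqref{eq:solve-lya} — is routine.
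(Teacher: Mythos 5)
Your proposal is correct and follows the same overall architecture as the paper: pass from Theorem~\ref{thm:lyap} to the limiting Lyapunov system, verify stability of $I-\gb^2\gC\gD^2\gL$ and $I-\gb^2\gC'\gD^2\gL$, and extract the closed forms. The one genuine difference is in the last step. The paper applies Lemma~\ref{lem:cle} to write $\hat{U}(2)$ (and $\hat{U}(1)$, $\hat{U}(0)$) as explicit integrals $\frac1N\int_0^\infty e^{-\frac{t}{2}(I-\gb^2\gC\gD^2\gL)}\gC\gL^{-1}e^{-\frac{t}{2}(I-\gb^2\gL\gD^2\gC)}\,dt + \mfR$ and evaluates them via the substitution $ABA^{\T}=B^{1/2}(B^{-1/2}AB^{1/2})(B^{-1/2}AB^{1/2})^{\T}B^{1/2}$ with $B=\gC\gL^{-1}$; you instead extract a subsequential limit, invoke uniqueness of the stable Lyapunov solution, and verify the candidate $\gC(I-\gb^2\gD^2\gL\gC)^{-1}\gL^{-1}$ directly via $(I-\gb^2\gC\gD^2\gL)\gC=\gC(I-\gb^2\gD^2\gL\gC)$. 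Your route is arguably cleaner for the $\gC'$ equation, where $\gC'\gL^{-1}$ may have negative diagonal entries so the paper's $B^{1/2}$ step needs a little care, whereas your direct check is insensitive to the signs of $\gc_s'$ (and one can verify symmetry of the candidate from $\gC'\gL^{-1}(I-\gb^2\gL\gD^2\gC')^{-1}=\gC'(I-\gb^2\gD^2\gL\gC')^{-1}\gL^{-1}$ using that diagonal matrices commute). One small imprecision: you suggest that the slack $\gb<\gb_0$ rather than $\gb<\gb_c$ is what rescues stability of $I-\gb^2\gC'\gD^2\gL$ for general $h$. In fact stability holds already for all $\gb<\gb_c$, by conjugating with $\gL^{1/2}$: $\rho(\gC'\gD^2\gL)=\rho(\gC'\gL^{1/2}\gD^2\gL^{1/2})\le\norm{\gC'}\,\norm{\gL^{1/2}\gD^2\gL^{1/2}}\le\rho(\gD^2\gL)$ since $|\gc_s'|\le1$ and $\gL^{1/2}\gD^2\gL^{1/2}$ is symmetric. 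The restriction $\gb<\gb_0$ in the general-$h$ regime is needed only to control the remainder $\mfR$ via Theorem~\ref{thm1}, not for stability.
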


\begin{rem}
	$\gS(i), i=0,1,2$ also solve a third equation~\eqref{eq:var-overlap-3} besides~\eqref{eq:solve-lya}, but it does not possess a simple form, see the details in Section~\ref{sec:varoverlap}.
\end{rem}

The asymptotic variance of overlap is given as solution to the linear system in~\eqref{lyap}. We recall the definition of \emph{stable} in the Definition~\ref{def:stable}. To get a unique solution from the system~\eqref{lyap}, one needs the matrices $\gb^2\gC\gD^{2}\gL-I, \gb^2\gC'\gD^{2}\gL-I$ to be stable, \ie\
\begin{align}\label{cond1:solve sys}
	\gb^2\max_{\gl \in \text{spec}(\gC\gD^2\gL)} \Re(\gl)<1 \quad \text{and} \quad \gb^2\max_{\gl \in \text{spec}(\gC'\gD^2\gL)} \Re(\gl) <1.
\end{align}

Note that $\gc_s >0 $ for all $s$ and $\gC\gD^2\gL$ is similar to a symmetric matrix. Thus all eigenvalues of $\gC\gD^2\gL$ are real, and by Perron--Frobenius theorem
\[
	\max_{\gl \in \text{spec}(\gC\gD^2\gL)} \Re(\gl)=\rho(\gC\gD^2\gL)
\]
and the condition~\eqref{cond1:solve sys} is equivalent to
\begin{align}\label{cond2:solve sys}
	\gb^2\max\{\rho(\gC\gD^2\gL),\max_{\gl\in \text{spec}(\gC'\gD^2\gL)} \Re(\gl)\}<1.
\end{align}

For \pd~$\gD^2$, the matrix $\gC'\gD^2\gL$ is similar to a symmetric matrix and has real eigenvalues. Moreover, using $\gc'_s \le \gc_s $, one can get
\begin{align}\label{cond3:psd solving}
	\max_{\gl\in \text{spec}(\gC'\gD^2\gL)} \Re(\gl) \le \rho(\gC\gD^2\gL).
\end{align}
Thus the results in Theorem~\ref{thm:varoverlap} will not be true unless $\gb^2<\rho(\gC\gD^2\gL)^{-1}$. This indicates that $\gb^2 \rho(\gC\gD^2\gL)=1$ is the AT line condition from the RS side for \pd~$\gD^{2}$.
To rigorously prove that $\gb^2< \rho(\gC\gD^2\gL)^{-1}$ gives RS phase is still open even in classical SK model. In~\cite{JT17}, the authors proved it except for a bounded region close to the phase boundary by using the Parisi formula. Recently, Bolthausen~\cites{Bol14,Bol19} developed an iterative TAP approach to SK model, where he also discussed some possible ideas to prove $\lim_{N\to \infty} F_N = \text{RS}(\gb,h)$ below the AT line.
For $h=0$, when $\gb^2< \gb_{c}^{2}=\rho(\gD^2\gL)^{-1}$ and $\gD^{2}$ is general, it is easy to check that condition~\eqref{cond1:solve sys} is satisfied, more details can be found in Section~\ref{sec:varoverlap}.

The following Theorem~\ref{AT line} states the RSB condition when $\gD^2$ is \pd, under the assumption that $\cC(\gb,h)$ is a singleton for $h>0$. Without this, it is not easy to give an accurate description of the AT line. We assume $\gD^2$ is \pd, as the proof is based on perturbation of the Parisi formula known only in that case in~\cite{Pan15}. Recall that we also need to assume that the infimum in~\eqref{RS} can only be achieved in $\cC(\gb,h)$ but not on the boundary.
\begin{thm}[RSB condition]\label{AT line}
	Assume that $\gD^2$ is \pd~and for some $\gb,h>0$, $\cC(\gb,h)$ is a singleton. We further assume that the infimum in~\eqref{RS} is not achieved on the boundary. Let $\vq$ be the unique critical point in $\cC(\gb,h)$. Define
	$
		\gC := \diag(\gc_1, \gc_2, \ldots, \gc_m)
	$
	where
	$
		\gc_s := \E \sech^4(\gb \eta\sqrt{Q_1^s}+h) \ \text{for} \ s=1, 2,\ldots, m.
	$
	If
	$
		\gb>\rho(\gC\gD^2\gL)^{-\half}
	$, then
	\[
		\lim_{N\to \infty}F_N < \RS(\gb,h).
	\]
\end{thm}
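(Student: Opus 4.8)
The plan is to follow the classical Toninelli--Talagrand strategy for proving replica symmetry breaking (as in \cite{Tal11b}*{Chapter 13} and as executed for $m=2$ in \cite{BSS19}), namely to exhibit a $1$-step replica symmetry breaking perturbation of the Parisi functional $\sP(\mvgz,\vq)$ that strictly decreases its value below $\RS(\gb,h)$ whenever $\gb^2\rho(\gC\gD^2\gL)>1$. Since $\gD^2$ is \pd, Theorem~\cite{Pan15}*{Theorem 1} gives $\lim_N F_N = \inf_{\mvgz,\vq}\sP(\mvgz,\vq)$, so it suffices to produce a feasible choice of parameters $(\mvgz,\vq)$ with $\sP(\mvgz,\vq)<\RS(\gb,h)$. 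The natural ansatz is $k=1$: take a single breakpoint $\gz_1 = x \in (0,1)$, and for each species $s$ set $q_1^s = q_s - \eps u_s$ and $q_2^s = q_s + \eps v_s$ for a small parameter $\eps>0$ and direction vectors $\vu,\vv$ to be chosen, where $\vq=\vq_\star$ is the unique critical point in $\cC(\gb,h)$. One then Taylor-expands $\sP$ in $\eps$ around the RS point and shows the expansion starts with a strictly negative term.

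The key computation is the expansion of $g(\eps,x) := \sP(x; \vq - \eps\vu, \vq + \eps\vv) - \RS(\gb,h)$. First I would record that at $\eps=0$ the perturbed functional reduces to $\sP_{\RS}(\vq)=\RS(\gb,h)$ regardless of $x$, and that the first-order term in $\eps$ vanishes because $\vq_\star$ is a critical point of $\sP_{\RS}$ (this is exactly where the singleton/interior assumption on $\cC(\gb,h)$ enters — we need the relevant gradient to vanish). So $g(\eps,x) = \eps^2 A(x) + o(\eps^2)$ for a quadratic form $A(x)$ in $(\vu,\vv)$ whose coefficients involve $x(1-x)$, the matrix $\gL\gD^2\gL$, and the single-species curvatures $\gb^2 \E\sech^4(\gb\eta\sqrt{Q_1^s}+h) = \gb^2\gc_s$ coming from differentiating the $X_0^s$ terms twice. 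Following the SK computation, after optimizing over the ratio between $\vu$ and $\vv$ (the martingale/variance-balancing choice forces a relation of the form $x\vu = (1-x)\vv$ up to scaling, or a species-weighted analogue dictated by $\gL\gD^2\gL$), and then letting $x\to 0$ or $x\to 1$, the leading coefficient $A(x)$ becomes proportional to a quadratic form in a direction $\vw$ given (up to the weight $\gL^{1/2}$) by an eigenvector, evaluated against $I-\gb^2\gC\gD^2\gL$. Concretely, choosing $\vw$ to be (a $\gL$-rescaling of) the Perron eigenvector of $\gC\gD^2\gL$ — which is legitimate because $\gC\gD^2\gL$ is similar to the symmetric PSD matrix $\gC^{1/2}\gL^{1/2}\gD^2\gL^{1/2}\gC^{1/2}$, has nonnegative entries after rescaling, and hence by Perron--Frobenius has a positive leading eigenvalue $\rho(\gC\gD^2\gL)$ with a nonnegative eigenvector — makes the leading coefficient strictly negative precisely when $\gb^2\rho(\gC\gD^2\gL)>1$. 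This yields $\sP(x;\vq-\eps\vu,\vq+\eps\vv)<\RS(\gb,h)$ for small $\eps$ and $x$ near the chosen endpoint, and therefore $\lim_N F_N < \RS(\gb,h)$.

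The main obstacle is the second-order expansion of the Parisi functional in the multi-species setting: unlike the scalar SK case, the coupling between species through $Q_\ell = \frac12 \vq_\ell^\T\gL\gD^2\gL\vq_\ell$ and $Q_\ell^s = (\gD^2\gL\vq_\ell)_s$ means the Hessian of $\sP$ at the RS point is a genuine matrix expression, and one has to carefully track which perturbation directions are ``flat'' (killed by the criticality of $\vq_\star$) versus which produce the $x(1-x)$ second-order gain. In particular, the term $-\frac{\gb^2}{2}\sum_\ell \gz_\ell(Q_{\ell+1}-Q_\ell)$ contributes a quadratic form in $\eps$ with matrix $\gL\gD^2\gL$, while the recursive $X_0^s$ terms contribute, after expanding $X_1^s = \frac1x\log\E_2\exp(xX_2^s)$ to second order in $\eps$, the diagonal curvature $\gb^2\gc_s (\gD^2\gL)_{s\cdot}$ together with an $O(x)$ correction; balancing these and showing the endpoint limit $x\to 0$ (or $1$) gives exactly the quadratic form $\vw^\T\gL^{1/2}\gC^{1/2}(I - \gb^2\gC^{1/2}\gL^{1/2}\gD^2\gL^{1/2}\gC^{1/2})\gC^{1/2}\gL^{1/2}\vw$ (or a similar symmetrized form) is the delicate algebraic step. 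I expect this to be essentially the $m$-species generalization of the computation in \cite{BSS19}, with the eigenvector analysis replacing their explicit $2\times 2$ diagonalization, and with the interior-critical-point hypothesis doing the work of ensuring the first-order term genuinely vanishes so that the sign of the second-order term decides the matter.
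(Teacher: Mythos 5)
Your proposal follows the same strategy as the paper's proof: a 1-RSB Toninelli-type perturbation of the Parisi functional whose first-order term vanishes by criticality of $\vq_{\star}$, followed by a Perron--Frobenius eigenvector choice that makes the relevant Hessian quadratic form positive (equivalently, forces the 1-RSB functional strictly below $\RS(\gb,h)$) exactly when $\gb^2\rho(\gC\gD^2\gL)>1$. The paper shortcuts the second-order expansion you sketch by invoking~\cite{BSS19}*{Lemma 3.1 and Corollary 3.2}, which already record $V(\vp)=\partial_\zeta \sP_{\text{1RSB}}(\vq_{\star},\vp,\zeta)\big|_{\zeta=1}$, $\nabla V(\vq_{\star})=0$, and $HV(\vq_{\star})=\gb^2\gL(\gb^2\gD^2\gL\gC\gD^2-\gD^2)\gL$ for general $m$, so that the only remaining work is precisely the Perron--Frobenius computation $\vx=\gL^{-\half}\gC^{\half}\vu$ giving $\vx^{\T}HV(\vq_{\star})\vx \propto \rho(\gC\gD^2\gL)(\gb^2\rho(\gC\gD^2\gL)-1)$, which matches the end of your argument.
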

Note that
\begin{align}\label{ATline}
	\gb_{\textsc{AT}}(h):=\rho(\gC\gD^2\gL)^{-\half}
\end{align}
seems to be the AT line in the MSK model for \pd~$\gD^{2}$, and when $h=0$, $\gb_{\textsc{AT}}(0) = \gb_c$ gives the critical temperature defined in~\eqref{betac}, see the left phase diagram picture of Figure~\ref{fig:phase}. When $h=0, \gb>\gb_c$, there seems to be at least one non-zero solution in $\cC(\gb,0)$. However, to prove RSB by the perturbation argument is still technically challenging.
For the general MSK model, here is a class of examples with a non-zero solution for $h=0,\gb>\gb_{c}$.

In the classical SK model, one can easily check that for $h=0,\gb>1$, there exists a non-zero solution $\hat{q}(\gb)\in (0,1)$ to  the equation $q=\E \tanh^2(\gb\eta \sqrt{q})$.  Take $\gD^2, \gL$ such that $\vone$ is an eigenvector of $\gD^{2}\gL$ with associated eigenvalue $\theta$. Then  $\gb_c = \rho(\gD^2\gL)^{-\half} = \theta^{-\half}$ and for $\gb>\gb_c$, we have $ \gb \sqrt{\theta}>1$. It follows that $\vq=\hat{q}(\gb \sqrt{\theta})\vone$ is a non-zero solution to the equations
\begin{align*}
	q_s= \E \tanh^2(\gb\eta \sqrt{(\gD^{2}\gL\vq)_s}) \text{ for }  s=1,2,\cdots, m.
\end{align*}

Finally, we point out that the AT line condition for $\gD^2$ \indf~seems to be  more complicated than the \pd~case. Because for \indf~$\gD^2$, the inequality~\eqref{cond3:psd solving} could fail depending on the values of $(\gb,h)$. For fixed $\gb>\gb_c$ and large $h$, we observe $\max_{\gl\in \text{spec}(\Gamma'\gD^2\gL)} \Re(\gl) \ge \rho(\gC\gD^2\gL)$ in some examples. However, it is not obvious whether in the $\gD^2$ \indf~case, the AT line is given by $\gb^2\rho(\gC\gD^2\gL)=1$ (see from the right picture in Figure~\ref{fig:phase}).

\begin{figure}[ht]
	\centering
	\includegraphics[scale=0.75]{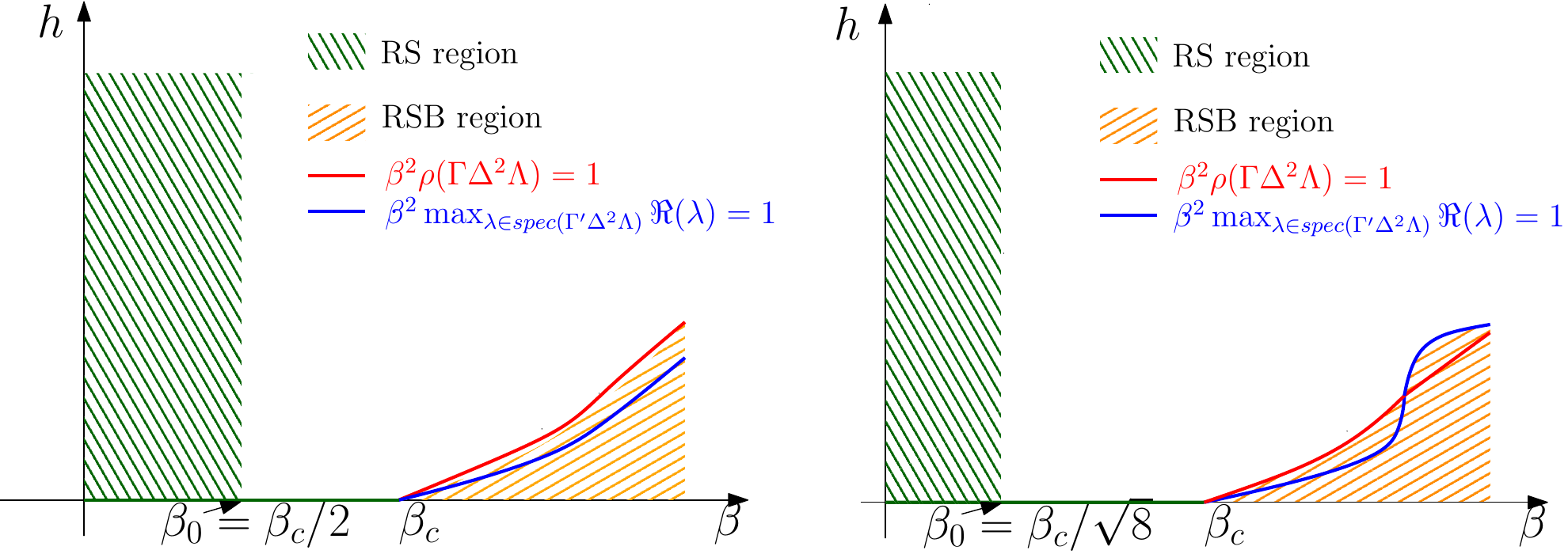}
	\caption{Phase diagram of MSK model: the left and right one are expected for \pd~ and \indf~$\gD^2$ respectively. The green shaded region corresponds to RS regime proved in Theorem~\ref{thm1} and~\ref{ovp no h}. The expected RSB phase is labelled in both cases. In both pictures, the red curve and blue curve correspond to $\gb^2 \rho(\gC\gD^2\gL)=1$ and $\gb^2\max_{\gl\in \text{spec}(\gC'\gD^2\gL)} \Re(\gl)=1$ respectively. The right picture for \indf~$\gD^2$ is based on the evidence from some examples, see Example~\ref{ex:RSB}.}
	\label{fig:phase}
\end{figure}

\begin{ex}\label{ex:RSB}
	Take $\gL =
		\begin{pmatrix}
			1/2 & 0   \\
			0   & 1/2
		\end{pmatrix}
	$ and $\gD^2 =
		\begin{pmatrix}
			0 & 1 \\
			1 & 0
		\end{pmatrix}$. This is the bipartite SK model, an example of \indf~MSK model. For $h>0$, by Proposition~\ref{prop:indf-uniq}, the following system
	\[
		\begin{cases}
			q_1 = \E \tanh^2(\gb \eta \sqrt{q_2/2} +h) \\
			q_2 = \E \tanh^2(\gb \eta \sqrt{q_1/2} +h)
		\end{cases}
	\]
	has a unique solution $q_1=q_2 = q= \E \tanh^2(\gb \eta \sqrt{\frac{q}{2}}+h)$. Similarly, $\gc_1 = \gc_2 = \gamma$ and $\gc_1' = \gc_2' =\gamma'$.

	Then we have
	\begin{align*}
		 & \rho(\gC\gD^2\gL) = \frac{\gamma}{2} \quad \text{and} \quad \max_{\gl\in \text{spec}(\Gamma'\gD^2\gL)} \Re(\gl)= \rho(\gC'\gD^2\gL) =\frac{\abs{\gamma'}}{2}.
	\end{align*}
	We know that $\gc \ge \gc', \gc>0$ always holds, but $\abs{\gamma'} \le \gamma$ is not always true (for large $h$), and it depends on the strength of external field $h$. The suggested AT line condition is $\gb^2 \max\{\gc, |\gc'|\}<1$.

	For the functional $\sP_{\RS}(\vq)$, the Hessian w.r.t.~$\vq$ is a positive multiple of
	$$H= \gD^2 - \gb^2 \gL\gC' =
		\begin{pmatrix}
			- \gb^2 \gc'/2 & 1 \\ 1 & - \gb^2\gc'/2
		\end{pmatrix}.$$
	$H$ is an indefinite matrix in the replica symmetric region, as the eigenvalues of $H$ are given by
	$ - \gb^2 \gc'/2 \pm 1 $
	and the RS region seems to be $ \beta^2 \max( \gamma, |\gamma'| ) /2 < 1 $.
	So, although the Parisi formula evaluated at the stationary point gives the limit for the free energy (proved for small $\beta$), it is not the minimizer of the Parisi formula directly taken from the positive-definite case.
\end{ex}

In general, we conjecture the RSB condition for \indf~$\gD^2$ as follows.
\begin{conj}\label{AT conj}
	For general indefinite $\gD^2$, if $$\gb^2\max\{\rho(\gC\gD^2\gL),\max_{\gl\in \text{spec}(\Gamma'\gD^2\gL)} \Re(\gl)\}>1,$$ the system is in RSB phase.
\end{conj}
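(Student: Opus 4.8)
Because no Parisi-type variational formula is known --- or even conjectured in closed form --- when $\gD^2$ is \indf, the perturbation-of-the-Parisi-functional argument that proves Theorem~\ref{AT line} and the results of~\cite{BSS19} is unavailable, so one must argue from the replica-symmetric side. Note that when $\gD^2$ is \pd\ the half-condition $\gb^2\rho(\gC\gD^2\gL)>1$ is already Theorem~\ref{AT line}; the genuinely new content of the conjecture is the \indf\ case, together with the extra spectral condition $\gb^2\max_{\gl\in\text{spec}(\gC'\gD^2\gL)}\Re(\gl)>1$. The plan is to prove the contrapositive: suppose the model at $(\gb,h)$ is replica symmetric, meaning the overlap vector $\vR_{12}$ concentrates under $\nu$ around a single deterministic point $\vq$. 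A standard one-spin cavity identity then forces $\vq$ to solve the fixed-point system~\eqref{syseq} (or the relations obtained by setting~\eqref{sys-crit} to zero if $\gD^2$ is not invertible), so that the diagonal matrices $\gC,\gC',\gC''$ of Section~\ref{sec:cavity} are well defined at $\vq$.

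The key point is that the species-wise cavity computation of Section~\ref{sec:cavity} producing the linear system~\eqref{lyap} of Theorem~\ref{thm:lyap} never uses positive-definiteness of $\gD^2$: it uses only (i) that $\vR_{12}$ concentrates around $\vq$ fast enough that the remainder $\mfR$, controlled by $N^{-3/2}+\sum_{s=1}^{m}\nu(\abs{R_{12}^s-q_s}^3)$, is $o(1/N)$, and (ii) Gaussian interpolation and integration-by-parts estimates insensitive to the sign pattern of $\gD^2$. Granting a version of (i) valid under the bare replica-symmetry hypothesis, one reruns Section~\ref{sec:cavity} verbatim and lets $N\to\infty$: the rescaled covariance matrices $N\cdot U(i)$ must then converge to symmetric positive-semidefinite limits solving the equations of Theorem~\ref{thm:varoverlap}. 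As noted after that theorem, such limits can exist only when $\gb^2\gC\gD^2\gL-I$ and $\gb^2\gC'\gD^2\gL-I$ are stable, i.e.\ only under condition~\eqref{cond1:solve sys}; this is the standard stability dichotomy for continuous Lyapunov equations $AX+XA^{\T}+Q=0$ with $Q$ symmetric positive-definite, a positive-semidefinite solution of which exists only if every eigenvalue of $A$ has strictly negative real part (see~\eg~\cite{AM07book}). Since $\gC$ has strictly positive diagonal and $\gC\gD^2\gL$ is similar to a symmetric matrix, Perron--Frobenius gives $\max_{\gl\in\text{spec}(\gC\gD^2\gL)}\Re(\gl)=\rho(\gC\gD^2\gL)$; whereas $\gC'\gD^2\gL$ need not be diagonalizable over $\Real$ once $\gD^2$ is \indf\ (as $\gc'_s$ may be negative), and stability of $\gb^2\gC'\gD^2\gL-I$ amounts exactly to $\gb^2\max_{\gl\in\text{spec}(\gC'\gD^2\gL)}\Re(\gl)<1$. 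Hence if $\gb^2\max\{\rho(\gC\gD^2\gL),\max_{\gl\in\text{spec}(\gC'\gD^2\gL)}\Re(\gl)\}>1$, no such limits can exist; the replica-symmetry hypothesis fails, and the model is in RSB phase.

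The main obstacle is step (i). Outside the regimes $\gb<\gb_0$ and $\{\gb<\gb_c,\,h=0\}$ of Theorems~\ref{thm1} and~\ref{ovp no h} there is no known mechanism by which mere concentration of $\vR_{12}$ upgrades to the $N^{-1/2}$ fluctuation scale and to the third-moment bound $\nu(\abs{R_{12}^s-q_s}^3)=O(N^{-3/2})$ that makes $\mfR=o(1/N)$; a self-improving estimate of this kind holding right up to the conjectured AT boundary is exactly what is missing even in the classical SK model, where the state of the art~\cite{JT17} reaches the RS phase only away from a neighborhood of the AT line and by way of the Parisi formula, and Bolthausen's iterative TAP program~\cites{Bol14,Bol19} gives only a partial route. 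One also needs a robust notion of ``RSB phase'' so that ``not replica symmetric'' is a genuine dichotomy --- for instance non-self-averaging of the overlap array, or the strict inequality $\lim_{N\to\infty}F_N<\RS(\gb,h)$ once an RS candidate is identified as in Theorem~\ref{AT line}. A secondary, \indf-specific difficulty is that the spectrum of $\gC'\gD^2\gL$ is genuinely complex, so Perron--Frobenius cannot be used to pin down the critical eigenvalue; deciding which eigenvalue of $\gb^2\gC'\gD^2\gL-I$ first meets the imaginary axis, and confirming that this instability truly destroys replica symmetry rather than merely a particular ansatz, is delicate.

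The case $h=0$ looks considerably more accessible: there $\vq=\vzero$, $\gC=\gC'=I$, the condition collapses to $\gb>\gb_c=\rho(\gD^2\gL)^{-\half}$, and the explicit examples with $\vone$ an eigenvector of $\gD^2\gL$ already exhibit a competing nonzero solution of~\eqref{syseq} whenever $\gb>\gb_c$. In that subcase one may hope to bypass the cavity machinery altogether, comparing the annealed upper bound for $F_N$ with a second-moment lower bound on $\nu(\cP(\vR_{12}))$ built from the quadratic-coupling identities of Section~\ref{concen-h0}, much as for the classical SK model at zero field; this would at least settle Conjecture~\ref{AT conj} on the line $h=0$.
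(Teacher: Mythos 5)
What you have written is a strategy outline rather than a proof, and you are right to present it that way: the statement is recorded in the paper as Conjecture~\ref{AT conj} precisely because no proof is known, and the paper itself offers only the heuristic you reproduce (the stability discussion surrounding~\eqref{cond1:solve sys}--\eqref{cond3:psd solving} and Example~\ref{ex:RSB}). So the comparison here is not with a proof in the paper but with an acknowledged open problem, and the gaps you flag are genuine and, as things stand, fatal.

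Concretely: (1) the linear system~\eqref{lyap} of Theorem~\ref{thm:lyap} is only derived for $\gb<\gb_0$ or $\{h=0,\ \gb<\gb_c\}$, because the remainder $\mfR$ is controlled by third moments $\nu(|R_{12}^{s}-q_s|^3)$, and these are $o(1/N)$ only thanks to the exponential concentration of Theorems~\ref{thm1} and~\ref{ovp no h}. A bare qualitative replica-symmetry hypothesis supplies neither a well-defined $\vq$ solving~\eqref{syseq} at the relevant $(\gb,h)$ nor the $N^{-3/2}$ third-moment scale, so your contrapositive cannot start outside the proven RS regimes; this is the same obstruction that leaves the AT line open from the RS side even in the classical SK model. (2) The Lyapunov dichotomy is invoked too loosely: instability of $\gb^2\gC'\gD^{2}\gL-I$ rules out a \emph{unique} solution of the limiting equations, but to conclude that \emph{no} admissible limit of $N\cdot U(i)$ exists one needs sign and definiteness information on the combinations $\hat U(i)$ and on the right-hand sides $\gC\gL^{-1},\gC'\gL^{-1},\gC''\gL^{-1}$ that is not established for \indf~$\gD^2$; and even granting nonexistence, what fails is only the assumed convergence of $N\cdot U(i)$, which is strictly weaker than replica symmetry breaking in any standard sense (non-self-averaging of the overlap array, or $\lim_{N\to\infty}F_N<\RS(\gb,h)$). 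Since no Parisi formula is available for \indf~$\gD^2$, the perturbation route of Theorem~\ref{AT line} is also closed. Your closing suggestion that the $h=0$ subcase (where $\gC=\gC'=I$ and the condition reduces to $\gb>\gb_c$) might yield to a second-moment argument is a sensible research direction, but it is not carried out here, so the conjecture remains open.
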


The above conjecture seems far to solve since the classical idea is based on perturbation of the Parisi formula and the Parisi formula in general \indf~case is still in mystery. Besides that, we also need to understand the solution to~\eqref{syseq} for \indf~$\gD^2$. To prove the AT line condition from the RS side for \indf~$\gD^2$ is even more challenging.

In order to illustrate the generality and correctness of our results, we look at some models studied in other papers as particular cases in our setting.
\begin{ex}
	Take $\gL =
		\begin{pmatrix}
			\gl_1 & 0     \\
			0     & \gl_2
		\end{pmatrix}
	$ and $\gD^2$ as in~\eqref{lamda}, then
	$$
		\gb_c= \rho(\gD^2 \gL) =\frac12\left(\gl_1 \gD_{11}^2 + \gl_2 \gD_{22}^2 + \sqrt{(\gl_1\gD_{11}^2-\gl_2\gD_{22}^2)^2+4\gl_1 \gl_2}\right).
	$$
	Similarly, one can check
	$$
		\gb_{AT}(h) = \rho(\gC\gD^2\gL)= \frac12\left(\gl_1\gc_1 \gD_{11}^2 + \gl_2\gc_2 \gD_{22}^2 + \sqrt{(\gl_1\gc_1\gD_{11}^2-\gl_2\gc_2\gD_{22}^2)^2+4\gl_1\gl_2\gc_1 \gc_2}\right).
	$$
	Theorem~\ref{thm0} and~\ref{AT line} extend the critical temperature~\eqref{con1} and AT line condition~\eqref{con2} to the general MSK model.
\end{ex}

\begin{ex}
	Take $\gL =
		\begin{pmatrix}
			\gl_1 & 0     \\
			0     & \gl_2
		\end{pmatrix}
	$ and $\gD^2 =
		\begin{pmatrix}
			0          & 1 \\
			\theta^{2} & 0
		\end{pmatrix}$. For $\theta=1$, this corresponds to the bipartite SK model. In~\cite{BL17} the authors study the BSSK model on the sphere with $h=0$ and compute $\gb_c = (\gl_1\gl_2)^{-1/4}$. In our case, we have $\gb_c = \rho(\gD^2\gL)^{-\half}$ in~\eqref{betac} and by a simple computation, one gets $\rho(\gD^2\gL)^{-\half}=(\gl_1\gl_2\theta^{2})^{-1/4}$, which agrees with $\gb_c$ in~\cite{BL17}.
\end{ex}


\subsection{Notations} \label{notation}
\begin{enumerate}
	\item $\rho(A)$ is the \emph{spectral radius} or the largest absolute value of the eigenvalues of $A$.
	\item $\norm{A}$ is the operator norm of $A$.
	\item $\gD^2 = (\!(\gD_{s,t}^2)\!)_{s,t=1}^m$ is the covariance matrix of Gaussian disorder.
	\item $\gL:=\diag(\gl_1,\gl_2,\ldots, \gl_m)$ is the diagonal matrix, whose diagonal entries is the ratio of the spins in each species.
	\item $\cV:=|\gL^{\sfrac12}\gD^2\gL^{\half}|$ which is obtained by taking absolute values of all the eigenvalues in the spectral decomposition of the matrix $\gL^{\sfrac12}\gD^2\gL^{\half}$.
	\item $\vR_{12} = (R^{(1)}_{12}, R^{(2)}_{12}, \ldots, R^{(m)}_{12})$ denotes the overlap vector.
	\item $\ovR_{12} := \vR_{12} -\vq$ is the centered overlap vector, and $\vq$ is defined through~\eqref{syseq}.

	\item $Q^s :=(\gD^2\gL\vq)_s \text{ for } s=1,2, \ldots, m$.

	\item $ \Gamma := \diag(\gc_1,\gc_{2}, \ldots, \gc_m)$ where $\gc_s := \E \sech^4(\gb \eta\sqrt{Q^s}+h) \text{ for }  s=1,2, \ldots, m$.

	\item $\la f(\mvgs) \ra := \sum_{\mvgs\in \gS_N} f(\mvgs ) \cdot {\exp(H_N(\mvgs))}/{Z_N} $ denotes the Gibbs average of function $f$ on $\gS_N$.

	\item $\gn_t(\cdot)$ denotes the expectation w.r.t.~the Gibbs randomness with interpolated Hamiltonian and disorder.

	\item $\cQ(\vx) = \vx^{\T}\gL\gD^2\gL \vx$ denotes the quadratic form associated with the symmetric matrix $\gL\gD^2\gL$. For a general symmetric matrix $A$, we will use $\cQ(\vx,A)$ for $\vx^\T A \vx$. We also write $\cP(\vx)=\cQ(\vx, \gL^{\half}\cV\gL^{\half})$.
\end{enumerate}

\subsection{Roadmap}\label{roadmap} The paper is structured as follows. Section~\ref{sec2: latala} is mainly about the proof of Theorem~\ref{thm1}, where the smart path interpolation argument is applied in the MSK model to prove the concentration of overlap at high temperature. The argument also holds for indefinite $\gD^2$, while in this case, the concentration happens in a different regime. Section~\ref{clt-free-engy} is mainly about the proof of Theorem~\ref{RS solution}, where we prove the RS solution of the MSK model and a Central Limit Theorem for the free energy with a non-zero external field. In Section~\ref{concen-h0}, we study the MSK model without an external field, where the proof of Theorem~\ref{ovp no h} is presented. A Central Limit Theorem for the free energy with zero external field can also be proved in a similar way as in~\cite{ALR87}, but we do not pursue this direction here. In Section~\ref{sec:cavity}, we develop a generalized cavity method to study the MSK model. Using second-moment computations, we derive a linear system of overlap functions and solve it using linear algebraic methods to get the variance-covariance structure of overlap vectors in Section~\ref{sec:varoverlap}. Basically, Section~\ref{sec:cavity} and~\ref{sec:varoverlap}  contain the proofs of Theorem~\ref{thm:lyap} and~\ref{thm:varoverlap} respectively. In Section~\ref{sec:RSB}, for \pd~MSK model, we give the proof of Theorem~\ref{AT line} by using the perturbation argument of the Parisi formula. Finally Section~\ref{sec:openqn} contains discussion and some further questions.

\section{Concentration of overlap in MSK}\label{sec2: latala}
Originally Latala's argument~\cite{La02} in the SK model is used to prove the concentration of overlap in part of the RS regime. The idea is based on Guerra's interpolation of two different spin glass models, one of which is the fully decoupled model, \ie~the associated Gibbs measure is a product measure, while the other model is the standard SK model of our interest. With the concentration of overlap in the decoupled model, one can obtain the concentration of overlap in the SK model by controlling the derivative of the interpolated model. For the details of this classical story, see Section 1.3 and 1.4 in~\cite{Tal11a}. This section employs a similar idea in the MSK model to prove the concentration of overlap vectors. With this concentration result, we also prove a part of the RS phase diagram.

Following Guerra's interpolation, given two independent centered Gaussian vectors $\mvu=(u_{\mvgs}), \mvv=(v_{\mvgs})$ indexed by $\mvgs\in\gS_{N}$, consider the interpolation given by $u_{\mvgs}(t) = \sqrt{t}\cdot u_{\mvgs}+\sqrt{1-t}\cdot v_{\mvgs}$, for $t \in [0,1]$. Suppose $F$ is a twice differentiable function allowing us to use Gaussian integration by parts. Taking $\phi(t) := \E F(\mvu(t))$ and using Gaussian integration by parts we get the following.
\begin{prop}
	For $\mvgs, \mvgt \in \gS_N$,
	\[ \phi^{\prime}(t) = \sum_{\mvgs,\mvgt}U(\mvgs,\mvgt) \E\left( \frac{\partial^2F}{\partial x_{\mvgs}x_{\mvgt}}(\mvu(t))\right) \]
	where $U(\mvgs,\mvgt) := \frac{1}{2}(\E u_{\mvgs} u_{\mvgt} -\E v_{\mvgs} v_{\mvgt} ) $.
\end{prop}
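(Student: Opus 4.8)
The statement is the standard smart-path derivative identity, so the plan is to differentiate $\phi(t)=\E F(\mvu(t))$ directly in $t$ and then apply Gaussian integration by parts with respect to the Gaussian family $\mvu(t)$. Throughout I fix $t\in(0,1)$ and set $\dot u_{\mvgs}(t):=\frac{d}{dt}u_{\mvgs}(t)=\frac{1}{2\sqrt{t}}\,u_{\mvgs}-\frac{1}{2\sqrt{1-t}}\,v_{\mvgs}$.

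First I would differentiate under the expectation and use the chain rule to obtain
\[
\phi'(t)=\sum_{\mvgs\in\gS_N}\E\!\left(\fpar{F}{x_{\mvgs}}(\mvu(t))\cdot \dot u_{\mvgs}(t)\right),
\]
the interchange of $\tfrac{d}{dt}$ and $\E$ being justified by dominated convergence together with the growth assumptions on $F$ (see the remark on the obstacle below). Next, for each fixed $t$ the pair $(\mvu(t),\dot u(t))$ is jointly centered Gaussian, so Gaussian integration by parts (Stein's lemma) applied coordinate by coordinate gives
\[
\E\!\left(\dot u_{\mvgs}(t)\,\fpar{F}{x_{\mvgs}}(\mvu(t))\right)=\sum_{\mvgt\in\gS_N}\E\bigl(\dot u_{\mvgs}(t)\,u_{\mvgt}(t)\bigr)\,\E\!\left(\mpar{F}{x_{\mvgs}}{x_{\mvgt}}(\mvu(t))\right).
\]
It remains to compute the covariance $\E(\dot u_{\mvgs}(t)\,u_{\mvgt}(t))$. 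Expanding and using that $\mvu$ and $\mvv$ are independent (so $\E(u_{\mvgs}v_{\mvgt})=\E(v_{\mvgs}u_{\mvgt})=0$), the singular factors cancel against $\sqrt t$ and $\sqrt{1-t}$ and one is left with
\[
\E\bigl(\dot u_{\mvgs}(t)\,u_{\mvgt}(t)\bigr)=\tfrac12\E(u_{\mvgs}u_{\mvgt})-\tfrac12\E(v_{\mvgs}v_{\mvgt})=U(\mvgs,\mvgt).
\]
Summing over $\mvgs\in\gS_N$ and inserting this into the previous display yields the claimed formula for $\phi'(t)$.

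The only genuine point requiring care — and the main obstacle — is the analytic justification: one must verify that $F\in C^2$ with $F$ and its first two partial derivatives of at most polynomial growth relative to Gaussian tails, so that both differentiation under $\E$ and the Gaussian integration-by-parts formula are legitimate with no boundary contributions. In every application of this proposition $F$ will be a constant multiple of the log-partition function of an interpolated MSK Hamiltonian, whose $x_{\mvgs}$-derivatives are controlled uniformly in $\mvgs$ by bounded, $\tanh$-type quantities, so this is routine; it is worth noting explicitly, however, that the singularities of $\dot u_{\mvgs}(t)$ at $t\in\{0,1\}$ are harmless because the identity is only needed on the open interval $(0,1)$, while $\phi$ extends continuously to $[0,1]$, which is all that is used downstream when integrating $\phi'$ over $[0,1]$.
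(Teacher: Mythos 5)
Your proof is correct and is exactly the standard smart-path computation that the paper invokes without spelling out: differentiate $\phi(t)=\E F(\mvu(t))$ under the expectation, apply Gaussian integration by parts to the jointly Gaussian pair $(\dot{\mvu}(t),\mvu(t))$, and use independence of $\mvu$ and $\mvv$ to see that the $\sqrt{t}$ and $\sqrt{1-t}$ factors cancel, leaving $\tfrac12\bigl(\E u_{\mvgs}u_{\mvgt}-\E v_{\mvgs}v_{\mvgt}\bigr)=U(\mvgs,\mvgt)$. Your remark about the regularity/growth needed for the interchange and the integration by parts is apt but not a real obstacle here: the choice $F(\mvx)=\tfrac1N\log\sum_{\mvgs}w_{\mvgs}e^{x_{\mvgs}}$ has uniformly bounded first and second partials (bounded by $1/N$ and $2/N$), and the index set $\gS_N$ is finite, so both steps are immediate.
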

Given $(\eta_{i})_{i=1,2,\ldots,N}$ i.i.d.~standard Gaussian random variables and independent of $(g_{ij})_{1\le i<j\le N}$, we take
\[ 
u_{\mvgs} = \frac{\beta}{\sqrt{N}} \sum_{i<j}g_{ij}\gs_i\gs_j, \quad v_{\mvgs} =\beta \sum_{s=1}^{m}\sqrt{(\gD^{2}\gL\vq)_{s}}\cdot \sum_{i \in I_s}\eta_{i} \gs_i, \quad w_{\mvgs} = \exp\bigl(h\sum_{i}\gs_i\bigr) \]
and $F(\mvx) = \frac1N \log Z_N$ to be the free energy, where $Z_N=\sum_{\mvgs}w_{\mvgs} e^{x_{\mvgs}}$. In this case, the interpolated Hamiltonian is
\begin{align}\label{inter-Hamilt}
	H_{N,t}(\mvgs) := u_t(\mvgs)= \sqrt{t}u_{\mvgs} + \sqrt{1-t}v_{\mvgs}.
\end{align}
Moreover, we have
\begin{align*}
	\E u_{\mvgs} u_{\mvgt} & = \frac{N\beta^2}{2}\left(\sum_{s,t}\lambda_s R^s\gD_{s,t}^2\lambda_tR^t-\frac{1}{N}\sum_s\gl_s\gD_{s,s}^2\right), \\
	\E v_{\mvgs} v_{\mvgt} & = N\beta^2\sum_s \gl_s R^s Q^s = N\beta^2\sum_{s,t} \gl_s R^s \gD_{s,t}^2 \gl_{t}q_{t}.
\end{align*}
In particular,
\begin{align*}
	U(\mvgs,\mvgt)  & = \frac{1}{2}(\E u_{\mvgs} u_{\mvgt} -\E v_{\mvgs} v_{\mvgt}) = \frac {N\beta^2} 4 \left(\cQ(\ovR)- \cQ(\vq) -\frac{1}{N}\sum_s \gl_s\gD_{s,s}^2\right) \\
	\text{ and }
	U(\mvgs, \mvgs) & =\frac {N\beta^2}4 \left(\cQ(\vone-\vq)- \cQ(\vq) -\frac{1}{N}\sum_s \gl_s\gD_{s,s}^2\right).
\end{align*}
By a simple computation, we have
\[ \frac{\partial F}{\partial x_{\mvgs}} = \frac1N\cdot w_{\mvgs} e^{x_{\mvgs}}/Z_N, \quad \frac{\partial^2 F}{\partial x_{\mvgs} \partial x_{\mvgt}} = \frac1N\cdot (\ind_{\mvgs=\mvgt} - w_\gt e^{x_{\mvgt}}/Z_N)\cdot w_{\mvgs} e^{x_{\mvgs}}/Z_N \]
and we use this to rewrite the formula of $\phi^{\prime}(t)$,
\begin{align}\label{free-ener deriv}
	\phi^{\prime}(t) & = \frac{1}{N} \E(\la U(\mvgs,\mvgs) \ra_t - \la U(\mvgs,\mvgt) \ra_t) = \frac{\gb^2}{4}\left(\cQ(\vone-\vq) - \gn_t(\cQ(\ovR_{12}))\right).
\end{align}
In the following section, we will prove that $\vR_{12}$ is concentrated around $\vq$ at some high temperature regime, \ie\ the term $\gn_t(\cQ(\ovR_{12}))$ is very small, and this concentration property will enable us to prove the RS solution in MSK model.

First, we will prove that the quantity $\gn_t(\cQ(\ovR_{12}))$ is small. The basic idea is to prove some exponential moments is non-increasing in $t$, then by controlling $\gn_0(\cQ(\ovR_{12}))$, we can analyze the other side: $\gn(\cQ(\ovR_{12}))$ along the interpolation path. In order to prove the Theorem~\ref{thm1}, we need to study the property of $\gn_t^{\prime}$, \ie~the derivative of $\gn_t$ with respect to $t$. The following lemma gives the expression of $\gn_t^{\prime}$.
\begin{lem}
	[{~\cite{Tal11a}*{Page~33}}] If $f$ is a function defined on $\gS_N^n$, then
	\begin{align*}
		\gn_t^{\prime}(f) & = \sum_{1 \le  l,l^\prime  \le  n} \gn_t(U(\mvgs^l,\mvgs^{l^{\prime}})f) + n(n+1)\gn_t(U(\mvgs^{n+1},\mvgs^{n+2})f) \\
		                  & \qquad\qquad\qquad-2n \sum_{l  \le  n} \gn_t(U(\mvgs^l,\mvgs^{n+1})f) - n\gn_t(U(\mvgs^{n+1},\mvgs^{n+1})f).
	\end{align*}
\end{lem}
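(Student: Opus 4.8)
\medskip
\noindent\emph{Proof plan.} The plan is to reduce to the Proposition above. Fix a bounded $f$ on $\gS_N^n$ and set
\[
F(\mvx):=\sum_{\mvgs^1,\ldots,\mvgs^n}f(\mvgs^1,\ldots,\mvgs^n)\,\frac{\prod_{l=1}^{n}w_{\mvgs^l}e^{x_{\mvgs^l}}}{\bigl(\sum_{\mvgt\in\gS_N}w_{\mvgt}e^{x_{\mvgt}}\bigr)^{n}},
\]
so that $F(\mvu(t))=\la f\ra_t$ and hence $\E F(\mvu(t))=\gn_t(f)$. This $F$ is smooth with bounded derivatives (the denominator never vanishes), so the Proposition above applies with the present choice of $\mvu,\mvv$ and yields
\[
\gn_t'(f)=\sum_{\mvgs,\mvgt}U(\mvgs,\mvgt)\,\E\Bigl(\tfrac{\partial^2 F}{\partial x_{\mvgs}\partial x_{\mvgt}}(\mvu(t))\Bigr),\qquad U(\mvgs,\mvgt)=\tfrac12\bigl(\E u_{\mvgs}u_{\mvgt}-\E v_{\mvgs}v_{\mvgt}\bigr),
\]
with the same kernel $U$ as before. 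So the entire task is to compute $\partial^2 F/\partial x_{\mvgs}\partial x_{\mvgt}$ and reorganize the sum; the only analytic point, the validity of the Gaussian integration by parts behind the Proposition, is handled by the usual truncation since $f$ is bounded on the finite set $\gS_N^n$.

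\medskip
\noindent First I would record the effect of a single derivative. Writing $Z_{N,t}=\sum_{\mvgt}w_{\mvgt}e^{u_{\mvgt}(t)}$ and $G_{N,t}(\mvgt)=w_{\mvgt}e^{u_{\mvgt}(t)}/Z_{N,t}$, the operator $\partial/\partial x_{\mvgt}$ acting on $F$ either hits one of the numerator factors $e^{x_{\mvgs^{l'}}}$ — producing the indicator $\ind_{\mvgs^{l'}=\mvgt}$, i.e.\ a replica coincidence — or it lowers the denominator power by the chain rule — producing a factor $-(\text{current power})\cdot w_{\mvgt}e^{x_{\mvgt}}/Z_{N,t}$, that is, one extra ``fresh'' replica carried by its own Gibbs weight. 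Consequently, after summing over $\mvgt$ against $U(\mvgs^l,\mvgt)$, the combination $\sum_{\mvgt}U(\mvgs^l,\mvgt)\ind_{\mvgs^{l'}=\mvgt}$ collapses to $U(\mvgs^l,\mvgs^{l'})$, and $\sum_{\mvgt}U(\mvgs^l,\mvgt)G_{N,t}(\mvgt)$ collapses, under $\gn_t$, to the value of $U$ at $\mvgs^l$ and a fresh replica.

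\medskip
\noindent It remains to bookkeep the Leibniz expansion of $\partial^2 F/\partial x_{\mvgs}\partial x_{\mvgt}$, which splits into five groups: both derivatives on numerator factors; $\partial_{\mvgt}$ on a numerator factor and $\partial_{\mvgs}$ on the denominator power; its mirror image; both on successive denominator powers (the chain rule applied to $Z_{N,t}^{-n}$ and then to $Z_{N,t}^{-n-1}$), which brings out the coefficient $n(n+1)$; and the diagonal term where $\partial_{\mvgs}$ and $\partial_{\mvgt}$ hit the \emph{same} factor $Z_{N,t}$ inside a $Z_{N,t}^{-n-1}$. Multiplying by $U(\mvgs,\mvgt)$, summing over $\mvgs,\mvgt$ and taking $\E$ sends these, in order, to $\sum_{1\le l,l'\le n}\gn_t(U(\mvgs^l,\mvgs^{l'})f)$, to $-n\sum_{l\le n}\gn_t(U(\mvgs^l,\mvgs^{n+1})f)$, to its mirror (equal, by $U(\mvgs,\mvgt)=U(\mvgt,\mvgs)$), to $n(n+1)\gn_t(U(\mvgs^{n+1},\mvgs^{n+2})f)$, and to $-n\gn_t(U(\mvgs^{n+1},\mvgs^{n+1})f)$, where $\mvgs^{n+1},\mvgs^{n+2}$ are the fresh replicas. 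Adding them gives the asserted formula. The main obstacle is precisely this combinatorics — correctly tracking the two fresh replicas and the successive powers of $Z_{N,t}$ so that the coefficients emerge as $n(n+1)$, $-2n$ and $-n$; everything else is the routine Gaussian-calculus argument of~\cite{Tal11a}*{Page~33} carried out replica by replica.
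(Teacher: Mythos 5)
Your proof is correct, and since the paper simply cites this lemma from \cite{Tal11a}*{Page~33} without reproducing an argument, what you have reconstructed is precisely the standard Gaussian-integration-by-parts/Leibniz bookkeeping that Talagrand carries out. To be explicit about the reduction: writing $G_t(\mvgr)=w_{\mvgr}e^{x_{\mvgr}}/Z$ so that $F=\sum f\prod_l G_t(\mvgs^l)$, one has $\partial_{\mvgt}G_t(\mvgr)=G_t(\mvgr)(\ind_{\mvgr=\mvgt}-G_t(\mvgt))$, and expanding $\partial^2_{\mvgs\mvgt}F$ by the product rule gives exactly your five groups; contracting against $U(\mvgs,\mvgt)$ and taking $\gn_t$ turns replica coincidences into $U(\mvgs^l,\mvgs^{l'})$ and fresh Gibbs weights into new replicas, producing the coefficients $1$, $-n$, $-n$, $n(n+1)$, $-n$ on the groups $\sum_{l,l'\le n}U(\mvgs^l,\mvgs^{l'})$, the two mixed sums, $U(\mvgs^{n+1},\mvgs^{n+2})$, and $U(\mvgs^{n+1},\mvgs^{n+1})$ respectively, which is the claimed identity. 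The remark about truncation is in fact unnecessary here (since $f$ lives on the finite set $\gS_N^n$, both $F$ and its first two derivatives are globally bounded, so Gaussian integration by parts applies directly), but this is a harmless over-caution rather than a gap.
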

Note that $\gn_t(U(\mvgs^l,\mvgs^{l})f) =\gn_t(U(\mvgs^{n+1},\mvgs^{n+1})f) $, therefore the above formula can be simplified one more step as follows:
\begin{align}\label{eq:gen-derivative}
	\gn_t^{\prime}(f) & = \sum_{1 \le  l <l^\prime  \le  n} 2\gn_t(U(\mvgs^l,\mvgs^{l^{\prime}})f) + n(n+1)\gn_t(U(\mvgs^{n+1},\mvgs^{n+2})f) -2n \sum_{l  \le  n} \gn_t(U(\mvgs^l,\mvgs^{n+1})f).
\end{align}
When $n=2$, simplifying we have
\begin{align}\label{derivative}
	\gn_t^{\prime}(f) & = \frac{N\gb^2}{2}\bigl(\gn_t(\cQ(\ovR_{12})f) - 2\sum_{l  \le  2}\gn_t(\cQ(\ovR_{l,3})f) + 3 \gn_t(\cQ(\ovR_{34})f)\bigr).
\end{align}
Next, we prove a useful lemma comparing the quadratic forms of overlap under $\gn_t'(\cdot)$.
\begin{lem}\label{holder lem}
	Consider $\gr >0$, for $l, l' \in \bN$, we have
	\begin{align*}
		\abs{ \gn_t (\cQ(\ovR_{\ell\ell'})\exp(\gr N\cP( \ovR_{12}))) } & \le \gn_t(\cP(\ovR_{12})\exp(\gr N\cP( \ovR_{12})))
	\end{align*}
	where $\cP(\vx) =\vx^{\T}\gL^{\half}\cV\gL^{\half}\vx$.
\end{lem}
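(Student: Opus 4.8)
The plan is to split the proof into a pointwise algebraic inequality and a probabilistic comparison of replicas. First I would record the elementary fact that $|\cQ(\vx)|\le \cP(\vx)$ for every $\vx\in\Real^m$. Setting $A:=\gL^{\half}\gD^2\gL^{\half}$ (a real symmetric matrix) and $\vy:=\gL^{\half}\vx$, we have $\cQ(\vx)=\vy^{\T}A\vy$ and $\cP(\vx)=\vy^{\T}\cV\vy=\vy^{\T}|A|\vy$, where $|A|=(A^2)^{1/2}$ replaces each eigenvalue of $A$ by its absolute value in the spectral decomposition. Diagonalizing $A=\sum_k \mu_k \vp_k\vp_k^{\T}$ gives $|\vy^{\T}A\vy|=|\sum_k\mu_k(\vp_k^{\T}\vy)^2|\le \sum_k|\mu_k|(\vp_k^{\T}\vy)^2=\vy^{\T}|A|\vy$. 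Since $\exp(\gr N\cP(\ovR_{12}))\ge 0$, this yields
\[ \abs{\gn_t(\cQ(\ovR_{\ell\ell'})\exp(\gr N\cP(\ovR_{12})))}\le \gn_t(\cP(\ovR_{\ell\ell'})\exp(\gr N\cP(\ovR_{12}))), \]
so it remains to prove $\gn_t(\cP(\ovR_{\ell\ell'})f)\le \gn_t(\cP(\ovR_{12})f)$ for $f:=\exp(\gr N\cP(\ovR_{12}))\ge 0$, for distinct indices $\ell\ne\ell'$.

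\textbf{Probabilistic step.} Here I would use that, conditionally on the disorder, the replicas $\mvgs^1,\mvgs^2,\mvgs^3,\dots$ are i.i.d.\ draws from the interpolated Gibbs measure, that $\gn_t$ is exchangeable in the replicas, and that $f$ depends only on $\mvgs^1,\mvgs^2$. Write $\la\cdot\ra^{(j)}$ for the partial Gibbs average over the single replica $\mvgs^j$ (given the disorder and the other replicas). The case $\{\ell,\ell'\}=\{1,2\}$ is an equality. If $\{\ell,\ell'\}\cap\{1,2\}=\eset$, then $\cP(\ovR_{\ell\ell'})$ and $f$ depend on disjoint blocks of replicas, hence $\la\cP(\ovR_{\ell\ell'})f\ra_t=\la\cP(\ovR_{\ell\ell'})\ra_t\la f\ra_t=\la\cP(\ovR_{12})\ra_t\la f\ra_t$ by exchangeability, and $\la\cP(\ovR_{12})\ra_t\la f\ra_t\le \la\cP(\ovR_{12})f\ra_t$ because $x\mapsto x$ and $x\mapsto e^{\gr Nx}$ are both nondecreasing, so $\cP(\ovR_{12})$ and $f=e^{\gr N\cP(\ovR_{12})}$ are comonotone functions of the single scalar $\cP(\ovR_{12})$ and thus positively correlated under any probability measure (Chebyshev's sum inequality). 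If $\abs{\{\ell,\ell'\}\cap\{1,2\}}=1$, by exchangeability I may assume $\{\ell,\ell'\}=\{1,3\}$; integrating out the fresh replica $\mvgs^3$ gives $\la\cP(\ovR_{13})f\ra_t=\la g(\mvgs^1)\,f\ra_t$ with $g(\mvgs^1):=\la\cP(\ovR_{12})\ra^{(2)}$ (the $\mvgs^3$-average equals the $\mvgs^2$-average after relabeling the dummy replica), while for each fixed $\mvgs^1$, $\la\cP(\ovR_{12})f\ra^{(2)}\ge \la\cP(\ovR_{12})\ra^{(2)}\la f\ra^{(2)}=g(\mvgs^1)\la f\ra^{(2)}$ by the same comonotonicity applied to the $\mvgs^2$-marginal; averaging over $\mvgs^1$ and then over the disorder closes this case.

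\textbf{Expected obstacle.} The algebraic step is routine; the only delicate point is the probabilistic one, namely making sure that after the fresh replica is integrated out, the weight $f$ and the quadratic form being compared are genuinely comonotone functions of the \emph{same} scalar $\cP(\ovR_{12})$. This is what lets the symmetrization work with no convexity assumption on $\cP$ and with no error term, and it is exactly the place where the choice of the quadratic form $\cP$ (rather than $\cQ$ itself) in the exponent is being exploited.
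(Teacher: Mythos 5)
Your proof is correct, and it takes a genuinely different route from the paper's. Both start from the pointwise bound $|\cQ(\vx)|\le\cP(\vx)$ (which you justify by spectral decomposition, the same way the paper does), so the task reduces to comparing $\gn_t(\cP(\ovR_{\ell\ell'})f)$ with $\gn_t(\cP(\ovR_{12})f)$ for $f=\exp(\gr N\cP(\ovR_{12}))$. The paper then expands $f$ in a power series and applies the H\"older inequality (with exponents $k+1$ and $(k+1)/k$) term by term to $\gn_t(\cP(\ovR_{\ell\ell'})\cP(\ovR_{12})^k)$, using replica symmetry under $\gn_t$ to collapse the right side to $\gn_t(\cP(\ovR_{12})^{k+1})$. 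You instead work conditionally on the disorder, exploiting that replicas are i.i.d.\ under the interpolated Gibbs measure: for $\{\ell,\ell'\}\cap\{1,2\}=\eset$ you factor the Gibbs average and then invoke positive association of the two nondecreasing functions $x\mapsto x$ and $x\mapsto e^{\gr Nx}$ of the single scalar $\cP(\ovR_{12})$ (Chebyshev's sum inequality); for $|\{\ell,\ell'\}\cap\{1,2\}|=1$ you integrate out the fresh replica and apply the same comonotonicity to the $\mvgs^2$-marginal at each fixed $\mvgs^1$. This is a clean and correct substitute for the paper's H\"older step: the H\"older route is more mechanical and uniform across the three cases, while yours avoids the series expansion and makes the underlying probabilistic mechanism (conditional independence of replicas plus positive correlation of comonotone functions) visible. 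One small presentational caveat: as stated, the lemma should be read with $\ell\ne\ell'$, since $\ovR_{\ell\ell}=\vone-\vq$ is a constant for which neither argument applies, and indeed the lemma is only used with distinct replica indices.
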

\begin{proof}
	[Proof of Lemma~\ref{holder lem}.] Let $i:= \abs{\{\ell,\ell'\} \cap \{1,2\}}$, we prove the above inequality case by case. For $i=2$, we need to prove
	\[ \gn_t(\cQ(\ovR_{12})\exp(\gr N\cP( \ovR_{12}))) \le \gn_t(\cP(\ovR_{12})\exp(\gr N\cP( \ovR_{12}))) \]
	the inequality is obvious by the fact that $|\cQ(\vx)|\le \cP(\vx)$. For $i=1$ and $i=0$, it can be proved by H\"older inequalities. We provide a proof for the case $i=1$. First, the general form of H\"older inequality w.r.t.~$\gn_t$ is:
	\[ \gn_t(f_1f_2)  \le  \gn_t(f_1^{\gt_1})^{1/\gt_1}\gn_t(f_2^{\gt_2})^{1/\gt_2} \quad \text{for} \quad f_1, f_2  \ge  0, \quad \frac{1}{\gt_1}+ \frac{1}{\gt_2} =1 \]
	Without loss of generality, we assume $\ell=1$ and $\ell'=3$, then one needs to prove
	\[ \abs{\gn_t(\cQ(\ovR_{13})\exp(\gr N\cP( \ovR_{12})))} \le \gn_t(\cP(\ovR_{12})\exp(\gr N\cP( \ovR_{12}))) \]
	Since
	\begin{align}\label{exp expans}
		\abs{\gn_t(\cQ(\ovR_{13})\exp(\gr N\cP( \ovR_{12}))) } & \le \gn_t(\cP(\ovR_{13})\exp(\gr N\cP( \ovR_{12})))\notag           \\
		                                                       & = \sum_k \frac{(\gr N)^k}{k!}\gn_t(\cP(\ovR_{13})\cP( \ovR_{12})^k)
	\end{align}
	Applying H\"{o}lder inequality with $\gt_1 = k+1, \gt_2 = (k+1)/k$, we get
	\begin{align*}
		\gn_t(\cP(\ovR_{13})\cdot \cP( \ovR_{12})^k) & \le \gn_t(\cP(\ovR_{13})^{k+1})^{1/(k+1)} \cdot \gn_t(\cP( \ovR_{12})^{k+1})^{k/(k+1)} = \gn_t(\cP(\ovR_{12})^{k+1})
	\end{align*}
	the last equality is by the symmetry among replicas. Combining with the exponential expansion~\eqref{exp expans}, we proved the case $i=1$. The case $i=0$ follows in a similar fashion.
\end{proof}
\begin{rem}\label{non-neg remark}
	In the Lemma~\ref{holder lem}, if $\gD^2$ is non-negative definite, then $\cP(\ovR_{12}) =\cQ(\ovR_{12})$, thus the similar inequalities of Lemma~\ref{holder lem} also hold by replacing $\cP(\ovR_{12})$ with $\cQ(\ovR_{12})$ and the proof is similar but simpler.
\end{rem}

\begin{cor}\label{derivat bound}
	If $\gk >0 $, then
	\begin{align*}
		\gn_t^{\prime}(\exp(\gk N \cP(\ovR_{12}))  \le  2 \ga N\gb^2\gn_t(\cP(\ovR_{12})\exp(\gk N \cP(\ovR_{12}))),
	\end{align*}
	where $\ga = 1+\ind\{\gD^2 \text{ is indefinite}\} $ as in Theorem~\ref{thm1}.
\end{cor}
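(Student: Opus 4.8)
The plan is to apply the general derivative formula~\eqref{derivative} with $f = \exp(\gk N\cP(\ovR_{12}))$ and then estimate the three resulting terms by Lemma~\ref{holder lem}. Concretely, setting $n=2$ in~\eqref{eq:gen-derivative} gives
\[
  \gn_t'(\exp(\gk N\cP(\ovR_{12}))) = \frac{N\gb^2}{2}\Bigl(\gn_t(\cQ(\ovR_{12})f) - 2\sum_{\ell\le 2}\gn_t(\cQ(\ovR_{\ell 3})f) + 3\gn_t(\cQ(\ovR_{34})f)\Bigr),
\]
so there are four overlap-quadratic-form terms in total (one with $i=2$, two with $i=1$, one with $i=0$). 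First I would bound each $\abs{\gn_t(\cQ(\ovR_{\ell\ell'})f)}$ by $\gn_t(\cP(\ovR_{12})f)$ using Lemma~\ref{holder lem}, which applies verbatim since $f = \exp(\gk N\cP(\ovR_{12}))$ is exactly of the form $\exp(\gr N\cP(\ovR_{12}))$ with $\gr = \gk > 0$. Summing the absolute values of the coefficients $1 + 2\cdot 2 + 3 = 8$ then yields
\[
  \gn_t'(\exp(\gk N\cP(\ovR_{12}))) \le \frac{N\gb^2}{2}\cdot 8 \cdot \gn_t(\cP(\ovR_{12})f) = 4N\gb^2\,\gn_t(\cP(\ovR_{12})\exp(\gk N\cP(\ovR_{12}))),
\]
which is the claimed bound with $\ga = 2$.

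To recover the sharper constant $\ga = 1$ in the positive-definite (more precisely, non-negative-definite) case, I would invoke Remark~\ref{non-neg remark}: when $\gD^2$ is non-negative definite, $\cP(\ovR_{12}) = \cQ(\ovR_{12})$ and $\cP(\ovR_{\ell\ell'}) = \cQ(\ovR_{\ell\ell'})$, and moreover $\cQ$ itself is a genuine positive-semidefinite quadratic form in the overlap so the Gram/Cauchy--Schwarz structure among replicas makes the three negative-and-positive contributions partially cancel rather than merely add in absolute value. In that situation one re-runs the H\"older estimate keeping track of signs to get the effective coefficient $2$ instead of $8$ — using that $\gn_t(\cQ(\ovR_{12})f) + 3\gn_t(\cQ(\ovR_{34})f) - 2\sum_{\ell\le 2}\gn_t(\cQ(\ovR_{\ell 3})f) \ge 0$ type identities together with the symmetry among replicas — giving $\gn_t'(\exp(\gk N\cP(\ovR_{12}))) \le 2N\gb^2\,\gn_t(\cP(\ovR_{12})\exp(\gk N\cP(\ovR_{12})))$, i.e. $\ga = 1$. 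This is exactly the mechanism behind the $\ind\{\gD^2\text{ indefinite}\}$ in the definition of $\ga$: in the indefinite case the term $\cP - \cQ$ is genuinely present and one has no choice but to bound all four terms crudely by $\cP(\ovR_{12})$, doubling the constant.

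The main obstacle I anticipate is organizing the sign bookkeeping cleanly enough that the constant comes out to precisely $2$ in the non-negative-definite case rather than something larger; the naive triangle-inequality bound only gives $8$, and getting down to $2$ requires using that the combination appearing in~\eqref{derivative} is (up to the exponential weight $f \ge 0$) a non-negative quadratic expression in the centered overlaps — essentially the statement that for a positive-semidefinite form $\cQ$ and vectors $\ovR_{12},\ovR_{13},\ovR_{23},\ovR_{34},\dots$ arising from replicas, $\gn_t$ of the relevant combination is nonnegative, combined with $\abs{\cQ(\ovR_{\ell\ell'})}\le \half(\cP(\ovR_{\ell\ell'})+\cP(\ovR_{\ell\ell'}))$-type reductions and replica symmetry. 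Since the corollary only asserts an upper bound with the stated $\ga$, even the crude route suffices for the indefinite case, and the positive-definite refinement follows the classical SK computation (cf.\ \cite{Tal11a}*{Section~1.4}) line by line with $\gL^{\half}\cV\gL^{\half}$ playing the role of the identity; I would simply cite that computation and indicate the modifications rather than reproduce it.
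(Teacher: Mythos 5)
Your plan is essentially the paper's argument, and the indefinite case is carried out exactly as in the paper: apply Lemma~\ref{holder lem} to each of the four terms in~\eqref{derivative}, sum the coefficients $1+2\cdot 2+3=8$, and multiply by $N\gb^{2}/2$ to get $4N\gb^{2}$, i.e.\ $\ga=2$.

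For the non-negative-definite case you anticipate a subtle sign-bookkeeping or ``Gram cancellation'' argument, and you even flag uncertainty about whether the constant comes out right. The actual mechanism is considerably simpler and does not involve any cancellation or any claim that the whole combination $\gn_{t}(\cQ(\ovR_{12})f)-2\sum_{\ell\le 2}\gn_{t}(\cQ(\ovR_{\ell 3})f)+3\gn_{t}(\cQ(\ovR_{34})f)$ is nonnegative (which would in any case not by itself give an \emph{upper} bound). When $\gD^{2}$ is non-negative definite, $\cQ(\vx)=\vx^{\T}\gL\gD^{2}\gL\vx\ge 0$ pointwise, and $f\ge 0$, so $\gn_{t}(\cQ(\ovR_{\ell 3})f)\ge 0$ for $\ell=1,2$. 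Hence the middle term $-2\sum_{\ell\le 2}\gn_{t}(\cQ(\ovR_{\ell 3})f)$ is nonpositive and can simply be discarded from the upper bound. Applying Lemma~\ref{holder lem} only to the two remaining terms with coefficients $1+3=4$ then gives $\frac{N\gb^{2}}{2}\cdot 4=2N\gb^{2}$, i.e.\ $\ga=1$. So there is no obstacle to organize: the step you were worried about reduces to dropping terms whose sign you already control, rather than exploiting any positive-semidefinite quadratic structure across replicas.
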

\begin{proof}
	[Proof of Corollary~\ref{derivat bound}.]

	Combined with the Lemma~\ref{holder lem} and the expression of $\gn_t^{\prime}(f)$ with $f= \exp(\gk N \cP(\ovR_{12}))$ in~\eqref{derivative}, and the symmetry between replicas, we can do the following analysis. When $\gD^2$ is non-negative definite, as in Remark~\ref{non-neg remark}, $\cP(\ovR_{12}) =\cQ(\ovR_{12})$. In this case, by just dropping the terms
	\[- 2\sum_{l  \le  2}\gn_t(\cQ(\ovR_{l,3})f) \]
	in~\eqref{derivative}, and applying the inequalities in Lemma~\ref{holder lem}, we get the upper bound of $\gn_t^{\prime}(f)$, which is $2N\gb^2\gn_t(\cP(\ovR_{12})\exp(\gk N \cP(\ovR_{12})))$. For the case $\gD^2$ is indefinite, similarly we just apply all the inequalities in Lemma~\ref{holder lem} to get the bound, \ie~$ 4N\gb^2\gn_t(\cP(\ovR_{12})\exp(\gk N \cP(\ovR_{12})))$.
\end{proof}
\begin{cor}\label{monotone}
	For $t <\gr/(2\ga \gb^2)$, the function
	\[ t \mapsto \gn_t(\exp((\gr-2\ga t\gb^2) N \cP(\ovR_{12}))) \]
	is non-increasing.
\end{cor}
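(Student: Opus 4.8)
The plan is to reduce the monotonicity of $t\mapsto\gn_t(\exp((\gr-2\ga t\gb^2)N\cP(\ovR_{12})))$ to the differential inequality already packaged in Corollary~\ref{derivat bound}. Write $\gk(t):=\gr-2\ga t\gb^2$ and $\psi(t):=\gn_t(\exp(\gk(t)N\cP(\ovR_{12})))$, defined for $t$ in the range where $\gk(t)>0$, i.e. $t<\gr/(2\ga\gb^2)$. The idea is simply to differentiate $\psi$ and show $\psi'(t)\le 0$ pointwise.

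First I would note that there are two sources of $t$-dependence in $\psi(t)$: the measure $\gn_t$ (through the interpolated Hamiltonian~\eqref{inter-Hamilt}) and the exponent $\gk(t)$ sitting inside the test function. By the chain rule,
\begin{align*}
	\psi'(t) &= \gn_t'\bigl(\exp(\gk(t)N\cP(\ovR_{12}))\bigr)\Big|_{\gk(t)\text{ fixed}} + \gk'(t)\cdot N\,\gn_t\bigl(\cP(\ovR_{12})\exp(\gk(t)N\cP(\ovR_{12}))\bigr),
\end{align*}
where the first term is the $\gn_t'$ of Corollary~\ref{derivat bound} applied with $\gk=\gk(t)$, and $\gk'(t)=-2\ga\gb^2$. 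Now apply Corollary~\ref{derivat bound}: the first term is bounded above by $2\ga N\gb^2\,\gn_t(\cP(\ovR_{12})\exp(\gk(t)N\cP(\ovR_{12})))$, which is exactly cancelled by the second term $-2\ga N\gb^2\,\gn_t(\cP(\ovR_{12})\exp(\gk(t)N\cP(\ovR_{12})))$. Hence $\psi'(t)\le 0$, giving the claim. One should check that $\gk(t)>0$ on the stated range so that Corollary~\ref{derivat bound} (which requires $\gk>0$) applies, and that all the exponential moments are finite — this follows since $\gS_N$ is finite, so $\cP(\ovR_{12})$ is bounded and every $\gn_t$-integral in sight is a finite sum.

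The only genuinely delicate point is the interchange of differentiation in $t$ between the two roles of $t$, i.e. justifying the chain-rule splitting above rigorously. Since the Gibbs average $\gn_t$ is a smooth (indeed real-analytic) function of $t$ on $(0,1)$ — the interpolated partition function is a finite sum of exponentials of $\sqrt t$- and $\sqrt{1-t}$-dependent terms, smooth away from the endpoints — and $\gk(t)$ is affine, the map $(t,\gk)\mapsto \gn_t(\exp(\gk N\cP(\ovR_{12})))$ is jointly $C^1$, so the total derivative decomposes as stated. (Near $t=0$ or $t=1$ one either works on the open interval and passes to the limit, or uses that the relevant quantities extend continuously.) So I expect the main obstacle to be purely bookkeeping: making sure the constant $2\ga\gb^2$ in the exponent is precisely the one needed to cancel the bound from Corollary~\ref{derivat bound}, which it is by construction. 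Everything else is a one-line computation. \qed
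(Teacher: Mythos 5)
Your proposal is correct and follows the same route as the paper: both split the total derivative of $t\mapsto\gn_t(\exp((\gr-2\ga t\gb^2)N\cP(\ovR_{12})))$ into the $\gn_t'$ term (with the exponent frozen) plus the $-2\ga N\gb^2\,\gn_t(\cP(\ovR_{12})\exp(\cdot))$ term from the $t$-dependent exponent, then invoke Corollary~\ref{derivat bound} to cancel. The extra remarks on joint smoothness of $(t,\gk)\mapsto\gn_t(\exp(\gk N\cP(\ovR_{12})))$ are reasonable housekeeping but not part of, nor needed beyond, the paper's one-line argument.
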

\begin{proof}
	[Proof of Corollary~\ref{monotone}.] Taking the derivative of $\gn_t(\exp((\gr-2\ga t\gb^2) N \cP(\ovR_{12})))$ w.r.t.~$t$, we get
	\begin{align*}
		\frac{d}{dt} & (\gn_t(\exp((\gr-2\ga t\gb^2) N \cP(\ovR_{12}))))                                                                                           \\
		             & =\gn_t^\prime(\exp((\gr-2\ga t\gb^2) N \cP(\ovR_{12}))) - 2\ga N\gb^2\gn_t(\cP(\ovR_{12})\exp((\gr-2\ga t\gb^2) N \cP(\ovR_{12})))  \le  0.
	\end{align*}
	The last step is due to Corollary~\ref{derivat bound}.
\end{proof}

Our goal is to control $\gn(\exp(\gk N \cP(\ovR_{12})))$, by the idea of interpolation, we just need to obtain a bound for $\gn_0(\exp(\gk N \cP(\ovR_{12})))$ since the interpolation path is non-increasing by Corollary~\ref{monotone}.
\begin{lem}\label{concen of decouple}
	For $\gk<1/(2\norm{\cV})$, we have
	\begin{align}
		\gn_0(\exp(\gk N \cP(\ovR_{12}))) \le \det(I-2\gk \cV)^{-1/2}.
	\end{align}
\end{lem}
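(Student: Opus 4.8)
The plan is to exploit the product structure of the $t=0$ Gibbs measure (exactly as in Talagrand's SK argument, \cite{Tal11a}*{Sections~1.3--1.4}), linearize $\cP$ by a Hubbard--Stratonovich transform, and reduce to a one-line Gaussian integral controlled by the spectrum of $\cV$. First, note that at $t=0$ the relevant exponent is $h\sum_i\gs_i+v_{\mvgs}=\sum_{s=1}^m\sum_{i\in I_s}\bigl(h+\gb\sqrt{Q^s}\,\eta_i\bigr)\gs_i$, a sum of single-site terms, so $\la\cdot\ra_0$ is a product measure over the sites with $\la\gs_i\ra_0=\tanh\bigl(h+\gb\sqrt{Q^s}\,\eta_i\bigr)$ for $i\in I_s$. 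Hence, under $\gn_0=\E\la\cdot\ra_0$, the variables $\chi_i:=\gs_i^1\gs_i^2$, $i\in I$, are independent and $\pm1$-valued, and for $i\in I_s$ one has $\gn_0(\chi_i)=\E\tanh^2\bigl(h+\gb\sqrt{Q^s}\,\eta\bigr)=q_s$ by the defining system~\eqref{syseq}. Thus each $\chi_i-q_s$ is centered with range at most $2$, and $(\ovR_{12})_s=(\gl_sN)^{-1}\sum_{i\in I_s}(\chi_i-q_s)$ exhibits $\ovR_{12}$ as a vector-valued average of independent, centered, bounded summands.

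Next, write $M:=\gL^{\half}\cV\gL^{\half}$, which is positive-definite, so that $\cP(\vx)=\vx^{\T}M\vx$. For an $m$-dimensional standard Gaussian vector $\vz$ independent of the model,
\[ \exp\bigl(\gk N\,\cP(\ovR_{12})\bigr)=\E_{\vz}\exp\bigl(\sqrt{2\gk N}\,\vz^{\T}M^{\half}\ovR_{12}\bigr), \]
so after applying $\gn_0$ and Fubini it suffices to bound $\E_{\vz}\,\gn_0\!\bigl(\exp(\sqrt{2\gk N}\,\vz^{\T}M^{\half}\ovR_{12})\bigr)$. For fixed $\vz$ the exponent is $\sum_{s}\sum_{i\in I_s}c_i\,(\chi_i-q_s)$ with $c_i=\sqrt{2\gk N}\,(M^{\half}\vz)_s/(\gl_sN)$; since the $\chi_i$ are independent under $\gn_0$, Hoeffding's lemma applied site by site (each $\chi_i-q_s$ centered, of range $\le2$) gives
\[ \gn_0\!\bigl(\exp(\sqrt{2\gk N}\,\vz^{\T}M^{\half}\ovR_{12})\bigr)\le\exp\Bigl(\gk\sum_{s=1}^m\tfrac{(M^{\half}\vz)_s^{2}}{\gl_s}\Bigr)=\exp\bigl(\gk\,\vz^{\T}M^{\half}\gL^{-1}M^{\half}\vz\bigr). \]
Integrating over $\vz$ yields $\gn_0\bigl(\exp(\gk N\cP(\ovR_{12}))\bigr)\le\det(I-2\gk A)^{-\half}$ with $A:=M^{\half}\gL^{-1}M^{\half}$, valid precisely when $I-2\gk A\succ0$.

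To conclude, observe that $A$ is symmetric positive-definite and, since $XY$ and $YX$ have the same eigenvalues, shares its spectrum with $\gL^{-1}M=\gL^{-\half}\cV\gL^{\half}$, hence with $\cV$; therefore $\rho(A)=\rho(\cV)=\norm{\cV}$ and $\det(I-2\gk A)=\det(I-2\gk\cV)$, so the constraint $I-2\gk A\succ0$ is exactly $\gk<1/(2\norm{\cV})$ and the bound is $\det(I-2\gk\cV)^{-\half}$, as stated. I do not anticipate a real obstacle here; the two points needing care are (a) making precise the product/independence structure at $t=0$ together with the fact that~\eqref{syseq} forces the centering constant to be exactly $q_s$, and (b) the similarity $A\sim\cV$ in this last step, which is what pins $\norm{\cV}$ as the exact threshold and $\det(I-2\gk\cV)$ as the exact constant rather than some rescaled matrix. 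A per-site estimate sharper than range-$2$ Hoeffding is available but unnecessary: Hoeffding already supplies variance proxy $1$ at each site, which is precisely what produces $\cV$.
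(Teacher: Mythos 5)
Your proof is correct and takes essentially the same route as the paper: linearize the quadratic $\cP$ via an auxiliary Gaussian, use the factorization of the $t=0$ Gibbs measure over sites (with $\gn_0(\gs_i^1\gs_i^2)=q_s$ forced by~\eqref{syseq}) to reduce to a per-site exponential-moment bound $\le\exp(u^2/2)$, and then integrate out the auxiliary Gaussian. The only differences are cosmetic: you take the symmetric square root $M^{\half}$ of $M=\gL^{\half}\cV\gL^{\half}$ and therefore need the similarity of $M^{\half}\gL^{-1}M^{\half}$ to $\cV$ to identify the final determinant, whereas the paper chooses $\mvu=\sqrt{2\gk}\,\gL^{\half}\cV^{\half}\mveta$ so that $\mvu^{\T}\gL^{-1}\mvu/2=\gk\,\mveta^{\T}\cV\mveta$ falls out directly, and the paper bounds the per-site factor with the explicit identity $\gn_0(\exp(u(\gs_i^1\gs_i^2-q)))=e^{-qu}(\cosh u+q\sinh u)\le e^{u^2/2}$ rather than by invoking Hoeffding's lemma (which gives the same constant since the range is $2$).
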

\begin{proof}
	[Proof of Lemma~\ref{concen of decouple}.] First, take a Gaussian vector $\mvu = \sqrt{2\gk} \gL^{\half} \cV^{\half} \mveta$, where $\mveta \sim \N(0,I_{m})$ and is independent with the disorder and Gibbs randomness, then
	\begin{align*}
		\gn_0(\exp(\gk N\cP(\ovR_{12})) =\E \gn_0(\exp(\sqrt{N}\mvu^{\T} \ovR_{12}))
		 & =\E\gn_0(\prod_{s=1}^m \exp(\sqrt{N}u_s(R^{s}-q_{s})))                                                               \\
		 & = \E\gn_0\left(\prod_{s=1}^m \prod_{i\in I_s} \exp\bigl(\sqrt{N}(\gs_{i}^1\gs_{i}^2-q_{s}){u_s}/{|I_s|}\bigr)\right) \\
		 & \le  \E \prod_{s=1}^m \exp(Nu_s^2/2|I_s|)
		= \E \exp(\mvu^{\T}\gL^{-1}\mvu/2),
	\end{align*}
	where the last inequality is by the fact that (see~\cite{Tal11a}*{Page 39} for a proof)
	\[\gn_0(\exp u(\gs_i^1\gs_i^2-q)) = \exp(-qu)(\cosh(u)+q\sinh(u)) \le \exp(u^2/2).\]
	Finally we use the fact that $\E \exp(\mvu^{\T}\gL^{-1}\mvu/2) = \E\exp(\gk \mveta^{\T} \cV \mveta) = \det(I-2\gk\cV)^{-\half}$
	and this completes the proof.
\end{proof}

Now we are ready to prove Theorem~\ref{thm1}.
\subsection{Proof of Theorem~\ref{thm1}}
Take $\gr = \gh +2\ga \gb^2<1/(2\norm{\cV})$, then
\begin{align*}
	\gn_t(\exp((\gh+2\ga(1-t)\gb^2) N \cP(\ovR_{12}))) =\gn_t(\exp((\gr-2\ga t\gb^2) N \cP(\ovR_{12})))
\end{align*}
By the non-increasing property of $\gn_t$ in Corollary~\ref{monotone}, we have
\[ \gn_t(\exp((\gr-2\ga t\gb^2) N \cP(\ovR_{12}))) \le \gn_0(\exp(\gr N \cP(\ovR_{12}))) \quad \text{for} \quad t \in [0,1]. \]
By Lemma~\ref{concen of decouple},
\[ \gn_0(\exp(\gr N \cP(\ovR_{12})))  \le  \det(I-2\gr \cV)^{-1/2} =\det(I-2(\gh +2\ga \gb^2) \cV)^{-1/2}. \]
Thus,
\[ \gn_1(\exp(\gh N \cP(\ovR_{12})))  \le  \det(I-2(\gh +2\ga \gb^2) \cV)^{-1/2}. \]
This proves Theorem~\ref{thm1}. \hfill\qed

\section{Asymptotic for the free energy}\label{clt-free-engy}
In this section, we give the proof of Theorem~\ref{RS solution}.

\subsection{Proof of Theorem~\ref{RS solution}}

First, let us prove the RS solution~\eqref{sol}. Recall the formula~\eqref{free-ener deriv} in Section~\ref{sec2: latala},
\[ \phi^{\prime}(t) = \frac{\gb^2}{4}\bigl(\cQ(\mathbf{1}-\vq) - \gn_t(\cQ(\ovR_{12}))\bigr) \]
which gives the derivative of free energy associated to $H_{N,t}(\mvgs)$. By Theorem~\ref{thm1}, when $\gb^2 < \gb_0^2$, we have $\gn_t(\cQ(\ovR_{12})) \le \frac{K}{N}$ by Jensen's inequality, which implies that $\abs{\phi'(t) - \frac{\gb^2}{4}\cQ(\bf1-\vq)} \le \frac{K}{N}$, combining with the fact that $\phi(1)- \phi(0) = \int_0^1 \phi'(t) dt$, then
\begin{align*}
	 & \abs{\phi(1)-\phi(0) - \frac{\gb^2}{4}\cQ(\bf1-\vq)} \le \frac{K}{N}
\end{align*}
where $\phi(1) = F_N(\gb,h)$, and $\phi(0)$ is the free energy with Hamiltonian $H_{N,0}(\mvgs)$, a simple calculation based on the definition of free energy can give us $\phi(0) = \log 2+ \sum_{r=1}^m \gl_r \E\log \cosh(\gb \eta \sqrt{Q^{r}}+h)$, where $Q^{r}=(\gD^{2}\gL\vq)_r$ and the expectation is w.r.t.~$\eta \sim \N(0,1)$.

Next, we prove the second part of Theorem~\ref{RS solution}, a quantitative Central Limit Theorem for the free energy with $h \neq 0$. Let
\[ X_t = \frac{1}{\sqrt{N}} \bigl(\log Z_{N,t}- \E \log Z_{N,0} - \frac14N \gb^2 t\cdot \cQ(\bf1-\vq)\bigr) \]
where $Z_{N,t}$ is the partition function associated with the interpolated Hamiltonian $H_{N,t}(\mvgs)$ introduced in Section~\ref{sec2: latala}, specifically

\[ Z_{N,t} = \sum_{\mvgs} \exp \biggl( \frac{\gb}{\sqrt{N}}\sum_{i<j}g_{ij}\gs_i \gs_j \cdot \sqrt{t} + \gb \sum_{r=1}^m \sqrt{Q^r} \sum_{i \in I_r} \eta_i \gs_i \cdot \sqrt{1-t} + h \sum_i \gs_i\biggr). \]

Our goal is to prove a Central Limit Theorem for $t=1$. Let us first introduce a lemma packing up technical details of the computation.
\begin{lem}\label{asy-lem1}
	Let $G_{t}(\mvgs):= \frac{\exp(H_{N,t}(\mvgs))}{Z_{N,t}}, \gs\in\{1,-1\}^{N}$ be the interpolated Gibbs measure. For any bounded twice differentiable function $g \in C_b^2(\bR)$ and $t \in [0,1]$, we have
	\begin{align}\label{asy-eq0}
		\frac{d}{dt} \E g(X_t) = \frac{\gb^2}{4} \E \left( g''(X_t)\bigl(\la \cQ(\ovR_{12})\ra_t - \cQ(\vq)-N^{-1}\tr(\gD^{2}\gL)\bigr) - \sqrt{N} g'(X_t)\la \cQ(\ovR_{12}) \ra_t\right).
	\end{align}
	In particular, for $m(t,\theta):= \E e^{\i \theta X_t}$ the characteristic function of $X_t$, we have
	\begin{align}\label{sec3:eq0}
		\abs{\frac{\partial}{\partial t}m(t,\theta) - v^2\theta^2 m(t,\theta) } \le \frac{\gb^2 C}{4}\left(\frac{|\theta|}{\sqrt{N}} + \frac{2\theta^2}{N}\right) \text{ for all }\theta\in\dR
	\end{align}
	where $C$ is an upper bound of $ \sup_{0\le t\le 1} N \nu_t(|\cQ(\ovR_{12})|)$ and $\tr(\gD^{2}\gL) $; and $v^2 :=\gb^2\cQ(\vq)/4$.
\end{lem}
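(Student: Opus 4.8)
The plan is to differentiate $\E g(X_t)$ in $t$, expose the Gaussian disorder through $\partial_t H_{N,t}$, integrate by parts in the Gaussians, and substitute the values of $U(\mvgs,\mvgt)$ and $U(\mvgs,\mvgs)$ already computed in Section~\ref{sec2: latala}; the bound \eqref{sec3:eq0} is then the case $g=e^{\i\theta\,\cdot}$ of \eqref{asy-eq0} combined with overlap concentration. First write $X_t = N^{-1/2}\log Z_{N,t} - a(t)$, where $a(t):= N^{-1/2}\E\log Z_{N,0} + \tfrac14 N^{1/2}\gb^2 t\,\cQ(\vone-\vq)$ is deterministic and affine in $t$. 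Since $g\in C_b^2(\dR)$ and $Z_{N,t}$ is a finite, smooth, strictly positive function of the two independent centered Gaussian families $\mvu=(u_{\mvgs})$, $\mvv=(v_{\mvgs})$, differentiation under the expectation is legitimate and gives $\frac{d}{dt}\E g(X_t) = \E\bigl[g'(X_t)\bigl(N^{-1/2}\la \partial_t u_t(\mvgs)\ra_t - a'(t)\bigr)\bigr]$, with $\partial_t u_t(\mvgs) = \tfrac1{2\sqrt t}u_{\mvgs} - \tfrac1{2\sqrt{1-t}}v_{\mvgs}$ and $a'(t)=\tfrac14 N^{1/2}\gb^2\cQ(\vone-\vq)$.

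Next comes the Gaussian integration by parts. The pair $\bigl(\partial_t u_t(\mvgs),(u_t(\mvgt))_{\mvgt}\bigr)$ is jointly centered Gaussian with $\E[\partial_t u_t(\mvgs)\,u_t(\mvgt)] = \tfrac12(\E u_{\mvgs}u_{\mvgt} - \E v_{\mvgs}v_{\mvgt}) = U(\mvgs,\mvgt)$, exactly the covariance from the interpolation Proposition. Since $g'(X_t)\,G_t(\mvgs)$ depends on the disorder only through $(u_t(\mvgt))_{\mvgt}$, with $\partial_{u_t(\mvgt)}X_t = N^{-1/2}G_t(\mvgt)$ and $\partial_{u_t(\mvgt)}G_t(\mvgs) = G_t(\mvgs)(\ind_{\mvgs=\mvgt}-G_t(\mvgt))$, integration by parts, after summing over $\mvgs$ and dividing by $\sqrt N$, yields
\begin{align*}
	\E\bigl[g'(X_t)\,N^{-1/2}\la\partial_t u_t(\mvgs)\ra_t\bigr] = \tfrac1N\,\E\bigl[g''(X_t)\la U(\mvgs^1,\mvgs^2)\ra_t\bigr] + N^{-1/2}\,\E\bigl[g'(X_t)\bigl(\la U(\mvgs,\mvgs)\ra_t - \la U(\mvgs^1,\mvgs^2)\ra_t\bigr)\bigr].
\end{align*}
Boundedness of $g',g''$, the bound $0\le G_t\le 1$, and finiteness of $\gS_N$ justify all the interchanges; the prefactors $\tfrac1{2\sqrt t},\tfrac1{2\sqrt{1-t}}$ blow up at $t\in\{0,1\}$ but this is harmless, either by working on $(0,1)$ and letting $t\to 0,1$ or by treating the $\mvu$- and $\mvv$-contributions separately, the covariances $U$ being continuous up to the endpoints.

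Then one assembles \eqref{asy-eq0}: substituting $U(\mvgs^1,\mvgs^2) = \tfrac{N\gb^2}{4}\bigl(\cQ(\ovR_{12}) - \cQ(\vq) - N^{-1}\tr(\gD^2\gL)\bigr)$ and $U(\mvgs,\mvgs) = \tfrac{N\gb^2}{4}\bigl(\cQ(\vone-\vq) - \cQ(\vq) - N^{-1}\tr(\gD^2\gL)\bigr)$ from Section~\ref{sec2: latala} (with $\sum_s\gl_s\gD_{s,s}^2 = \tr(\gD^2\gL)$), the first term becomes $\tfrac{\gb^2}{4}\E[g''(X_t)(\la\cQ(\ovR_{12})\ra_t - \cQ(\vq) - N^{-1}\tr(\gD^2\gL))]$ and the second becomes $\tfrac14 N^{1/2}\gb^2\,\E[g'(X_t)(\cQ(\vone-\vq) - \la\cQ(\ovR_{12})\ra_t)]$; adding $-a'(t)\,\E[g'(X_t)]$ cancels the $\cQ(\vone-\vq)$ contributions and leaves exactly \eqref{asy-eq0}. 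For \eqref{sec3:eq0}, apply \eqref{asy-eq0} to the real and imaginary parts of $g=e^{\i\theta\,\cdot}$ (for which $g'=\i\theta e^{\i\theta\,\cdot}$, $g''=-\theta^2 e^{\i\theta\,\cdot}$), then subtract $v^2\theta^2 m(t,\theta)$ with $v^2=\gb^2\cQ(\vq)/4$; the $\cQ(\vq)$ terms cancel and what remains is, in modulus, at most a constant times $\theta^2\gn_t(|\cQ(\ovR_{12})|) + \theta^2 N^{-1}\tr(\gD^2\gL) + N^{1/2}|\theta|\,\gn_t(|\cQ(\ovR_{12})|)$, using $|e^{\i\theta X_t}|=1$, $|\la\cQ(\ovR_{12})\ra_t|\le\la|\cQ(\ovR_{12})|\ra_t$, Jensen and Fubini. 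The overlap concentration of Section~\ref{sec2: latala} (with $|\cQ(\vx)|\le\cP(\vx)$) gives $\gn_t(|\cQ(\ovR_{12})|)\le C/N$ uniformly in $t\in[0,1]$, and $\tr(\gD^2\gL)\le C$, so these three contributions add up to at most $\tfrac{\gb^2 C}{4}\bigl(|\theta|/\sqrt N + 2\theta^2/N\bigr)$, as claimed.

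The only genuinely delicate point is the $\sqrt N$-enhanced term $\sqrt N\,g'(X_t)\la\cQ(\ovR_{12})\ra_t$ in \eqref{asy-eq0}: a priori it is of size $\sqrt N$, and it is only the $O(1/N)$ control on $\gn_t(|\cQ(\ovR_{12})|)$ coming from Theorem~\ref{thm1} that tames it down to $O(|\theta|/\sqrt N)$ — this is precisely the input that makes the $N^{1/2}$-scaling of the central limit theorem close. Differentiation under the integral, the Gaussian integration by parts (carried out for the real and imaginary parts of the exponential), and the endpoint behaviour of $\partial_t u_t$ are all routine.
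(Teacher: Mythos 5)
Your proof is correct and is essentially the paper's proof in a slightly more abstract notation: the paper differentiates $\E g(X_t)$, exposes the time-derivative of the interpolated Hamiltonian, and applies Gaussian integration by parts directly against the disorders $g_{ij}$ and $\eta_i$, arriving at exactly the same two terms involving $U(\mvgs^1,\mvgs^2)$ and $U(\mvgs,\mvgs)$ that you obtain by integrating by parts against the family $(u_t(\mvgt))_{\mvgt}$ and using the precomputed covariance $U(\mvgs,\mvgt)$. Your packaging via $\partial_{u_t(\mvgt)}X_t$ and $\partial_{u_t(\mvgt)}G_t(\mvgs)$ is a cleaner way to state the identical computation, and the passage to \eqref{sec3:eq0} via $g=e^{\i\theta\cdot}$ and the bound $N\nu_t(|\cQ(\ovR_{12})|)\le N\nu_t(\cP(\ovR_{12}))\le C$ matches the paper exactly.
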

\begin{proof}
	[Proof of Lemma~\ref{asy-lem1}.] For $g \in C_b^{2}(\bR)$, we evaluate the LHS directly,
	\begin{align*}
		 & \frac{d}{dt} \E g(X_t)                                                                                                                                                                                                                                            \\
		 & =\frac{1}{\sqrt{N}} \E g'(X_t) \biggl( \frac{\gb}{2 }\sum_{\mvgs} G_t(\mvgs) \biggl(\frac{1}{\sqrt{tN}}\sum_{i<j} g_{ij}\gs_i \gs_j - \frac{1}{\sqrt{1-t}} \sum_{r=1}^{m}\sqrt{Q^r} \sum_{i \in I_r}\eta_i \gs_i \biggr)- \frac{N\gb^2}{4} \cQ(\vone-\vq)\biggr).
	\end{align*}
	Notice that $\frac{1}{\sqrt{tN}}\sum_{i<j} g_{ij}\gs_i \gs_j - \frac{1}{\sqrt{1-t}} \sum_{r=1}^{m}\sqrt{Q^r} \sum_{i \in I_r}\eta_i \gs_i$ and $X_t$ are both Gaussian random variables. Applying Gaussian integration by parts, we get that the RHS is equal to
	\begin{align*}
		 & \frac{\gb^2}{2N} \E \Bigg[ g''(X_s)\sum_{\mvgs,\mvgt}G_t(\mvgs) G_t(\mvgt) \left( \frac12 (N\cQ(\vR_{12})-\sum_{r=1}^m \gl_r \gD_{r,r}^2) - N \sum_{r,s}\gl_rR_{12}^r\gD_{r,s}^2\gl_sq_s \right) \Bigg] \\
		 & \qquad\qquad + \frac{\gb^2}{2\sqrt{N}} \E \Bigg[ g'(X_s)\Bigg(\sum_{\mvgs} G_t(\mvgs) \Big( \frac12 (N\cQ(\vone)-\sum_{r=1}^m \gl_r \gD_{r,r}^2) - N \sum_{r,s}\gl_r\gD_{r,s}^2\gl_sq_s                 \\
		 & \qquad\qquad -\sum_{\mvgt}G_t (\mvgt)\Big(\frac12 (N\cQ(\vR_{12})-\sum_{r=1}^m \gl_r \gD_{r,r}^2)- N \sum_{r,s}\gl_rR_{12}^r\gD_{r,s}^2\gl_sq_s \Big)\Big)- \frac{N}{2}\cQ(\vone-\vq)\Bigg)\Bigg]       \\
		 & = \frac{\gb^2}{4} \E \left[ g''(X_t) \left( \la \cQ(\ovR_{12}) \ra_t - \cQ(\vq) - N^{-1}\tr(\gD^{2}\gL)\right) - \sqrt{N} g'(X_t)\la \cQ(\ovR_{12}) \ra_t \right].
	\end{align*}
	For the second part, taking $g(x)=e^{\i x\theta}$ for fixed $\theta\in\dR$, equation~\eqref{asy-eq0} can be rewritten as
	\begin{align*}
		\frac{\partial}{\partial t}m(t,\theta) & = \frac{\gb^2\theta^2}{4} \cQ(\vq) m(t,\theta)                                                                                                                                                \\
		                                       & \quad+ \frac{\gb^2}{4}\E \left(\Bigl( -\frac1N\theta^2\left( N\la \cQ(\ovR_{12}) \ra_t - \tr(\gD^{2}\gL)\right) - \i \sqrt{N} \theta\la \cQ(\ovR_{12}) \ra_t \Bigr) e^{\i \theta X_t}\right).
	\end{align*}
	From Section~\ref{sec2: latala}, we know that $N \nu_{t}|\cQ(\ovR_{12})|\le N \nu_{t}(\cP(\ovR_{12}))$ is bounded, then
	\begin{align*}
		\abs{\frac{\partial}{\partial t}m(t,\theta) - v^2\theta^2 m(t,\theta) } \le \frac{\gb^2 C}{4}\left(\frac{|\theta|}{\sqrt{N}} + \frac{2\theta^2}{N}\right) \text{ for all }t\in[0,1], \theta\in\dR
	\end{align*}
	where $\max \big\{ N\nu_t|\cQ(\ovR_{12})|, 0\le t\le 1, \tr(\gD^2\gL) \big\} \le C.$
\end{proof}

Now we prove the Central Limit Theorem for the free energy.
\begin{proof}
	[Proof of Theorem~\ref{RS solution} part~(ii)] Notice that
	\begin{align}\label{sec3:eq1}
		\abs{e^{-v^2\theta^2}m(1,\theta) - m(0,\theta)} & = \abs{\int_0^1 e^{-v^2\theta^2 t}\left( \partial_t m(t,\theta) - v^2\theta^2 m(t,\theta)\right) dt} \notag                                                                                           \\
		                                                & \le \frac{\gb^2 C}{4} \left(\frac{|\theta|}{\sqrt{N}} + \frac{2\theta^2}{N}\right) \int_0^1 e^{-v^2\theta^2 t} dt \le \frac{\gb^2 C}{4} \left(\frac{|\theta|}{\sqrt{N}} + \frac{2\theta^2}{N}\right).
	\end{align}
	Thus
	\begin{align*}
		\abs{m(1,\theta)- m(0,\theta)e^{v^2\theta^2}} \le \frac{\gb^2 C}{4} \left(\frac{|\theta|}{\sqrt{N}} + \frac{2\theta^2}{N}\right)e^{v^2\theta^2}.
	\end{align*}
	For $\theta$ fixed, let $N \to \infty$. The RHS of the last step goes to 0. Moreover, as $N\to\infty$, we have by the classical CLT $m(0,\theta) \to e^{-c^2\theta^2/2}$ where $$c^2=\lim_{N\to\infty}N^{-1}\var(\log Z_{N,0})=\sum_{s=1}^{m}\gl_{s}\var(\log\cosh(\gb\eta\sqrt{Q^{s}}+h)).$$
	This completes the proof.
\end{proof}


\section{Concentration of overlap with zero external field}\label{concen-h0}

In this section, we study the MSK model without an external field. We prove that when $\gb<\gb_c$, the MSK model is in the replica symmetric phase. Note that our argument holds when $\gD^2$ is indefinite. In that case, the RS regime is still given by $\gb<\gb_c$ as in $\gD^2$ \pd~case.

The proof of Theorem~\ref{ovp no h} is based on a control of the free energy. Here $q=0$ is the unique solution to~\eqref{syseq} and the averaged Gibbs measure under the decoupled Hamiltonian is the product of i.i.d.~Bernoulli($\half$) measures and thus is non-random. Before giving the proof, we introduce several useful lemmas following the proof for the classical case. Recall that $\cP(\vx)=\vx^{\T}\gL^{\half}\cV\gL^{\half}\vx$ and $\cV=|\gL^{\half}\gD^{2}\gL^{\half}|$.

\begin{lem}\label{use-lem}
	If $\gamma+\gb^2 < \gb_c^2$, then
	\begin{align}
		\E \sum_{\mvgs_1, \mvgs_2} \exp\left( H_N(\mvgs_1)+ H_N(\mvgs_2) + \frac{\gamma N}{2}\cP(\vR_{12})\right) \le (\E Z_N)^2\cdot \det(I-(\gb^2+\gamma)\cV)^{-\half}.
	\end{align}
\end{lem}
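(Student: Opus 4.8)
The plan is to compute the averaged "replicated" partition function directly, using the Gaussian structure of the disorder. Write the left-hand side as $\E \sum_{\mvgs_1,\mvgs_2} \exp(H_N(\mvgs_1)+H_N(\mvgs_2)+\tfrac{\gamma N}{2}\cP(\vR_{12}))$ and first take the expectation over the disorder $(g_{ij})$ only. Since $H_N(\mvgs_1)+H_N(\mvgs_2) = \tfrac{\gb}{\sqrt N}\sum_{i<j}g_{ij}(\gs_i^1\gs_j^1+\gs_i^2\gs_j^2)+h\sum_i(\gs_i^1+\gs_i^2)$ and the $g_{ij}$ are independent centered Gaussians with $\E g_{ij}^2=\gD_{st}^2$ for $i\in I_s,j\in I_t$, the Gaussian moment generating function gives a factor $\exp\bigl(\tfrac{\gb^2}{2N}\sum_{i<j}\gD_{st}^2(\gs_i^1\gs_j^1+\gs_i^2\gs_j^2)^2\bigr)$. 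Expanding $(\gs_i^1\gs_j^1+\gs_i^2\gs_j^2)^2 = 2 + 2\gs_i^1\gs_i^2\gs_j^1\gs_j^2$ (using $(\gs_i^k)^2=1$) and recognizing $\tfrac1N\sum_{i\in I_s,j\in I_t}\gs_i^1\gs_i^2\gs_j^1\gs_j^2 = N\gl_s R_{12}^s\gl_t R_{12}^t$ up to diagonal corrections of order $1/N$, this contributes $\exp\bigl(\tfrac{\gb^2 N}{2}\cQ(\vR_{12})+O(1)\bigr)$, while the $h$-term reassembles, after also dividing by $(\E Z_N)^2$, into the decoupled product measure. The diagonal $i=j$ corrections and the $1\le i<j$ versus full-sum discrepancy produce bounded multiplicative constants; I would either absorb these into the statement's constant or, to get the clean bound as stated, note $\cP\ge\cQ$ so $\tfrac{\gb^2}{2}\cQ(\vR_{12})+\tfrac{\gamma}{2}\cP(\vR_{12})\le\tfrac{\gb^2+\gamma}{2}\cP(\vR_{12})$ and carry $\cP$ from here on.

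The second step mirrors the argument in Lemma~\ref{concen of decouple}. After the disorder average, the left-hand side is bounded by $(\E Z_N)^2$ times the expectation, under the decoupled i.i.d.\ Bernoulli($\half$) measure on two replicas, of $\exp\bigl(\tfrac{\gb^2+\gamma}{2}N\cP(\vR_{12})\bigr)$. Introducing a Gaussian linearization $\mvu=\sqrt{(\gb^2+\gamma)}\,\gL^{\half}\cV^{\half}\mveta$ with $\mveta\sim\N(0,I_m)$ independent of everything, write $\exp\bigl(\tfrac{\gb^2+\gamma}{2}N\cP(\vR_{12})\bigr) = \E_{\mveta}\exp\bigl(\sqrt N\,\mvu^{\T}\vR_{12}\bigr)$ and factorize over sites: for $i\in I_s$ the contribution is $\E\exp\bigl(\sqrt N\, u_s\gs_i^1\gs_i^2/|I_s|\bigr) = \cosh\bigl(\sqrt N\,u_s/|I_s|\bigr)\le\exp\bigl(Nu_s^2/(2|I_s|^2)\bigr)$ (here $q=0$, so no shift is needed and the bound $\cosh(x)\le e^{x^2/2}$ applies directly). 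Multiplying over $i\in I_s$ and over $s$ gives $\exp\bigl(\sum_s N u_s^2/(2|I_s|)\bigr) = \exp\bigl(\tfrac12\mvu^{\T}\gL^{-1}\mvu\bigr)$, and then $\E_{\mveta}\exp\bigl(\tfrac12\mvu^{\T}\gL^{-1}\mvu\bigr) = \E_{\mveta}\exp\bigl(\tfrac{\gb^2+\gamma}{2}\mveta^{\T}\cV\mveta\bigr) = \det\bigl(I-(\gb^2+\gamma)\cV\bigr)^{-\half}$, which is finite precisely because $\gamma+\gb^2<\gb_c^2 = \rho(\gD^2\gL)^{-1} = \norm{\cV}^{-1}$ (using that $\gL^{\half}\gD^2\gL^{\half}$ and $\gD^2\gL$ are similar, so have the same spectral radius, and $\norm{\cV}=\rho(\cV)=\rho(\gD^2\gL)$).

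I expect the only genuinely fiddly point to be the bookkeeping of the $O(1/N)$ diagonal terms arising from $(\gs_i^1\gs_j^1+\gs_i^2\gs_j^2)^2$ and from the restriction $i<j$ — namely the terms $\tfrac{\gb^2}{2N}\sum_s\gl_s\gD_{ss}^2$ that appeared already in the computation of $U(\mvgs,\mvgs)$ in Section~\ref{sec2: latala}. These are independent of the configuration and hence contribute a deterministic bounded factor $e^{O(1)}$; if one wants the bound exactly as displayed (with no extra constant) one should instead keep $\cQ$ rather than switching to $\cP$ and check that these corrections have the right sign, or equivalently absorb them by a tiny enlargement of $\gamma$. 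Either way the structure of the argument — Gaussian average over disorder to expose $\cQ(\vR_{12})$, then Gaussian linearization plus per-site $\cosh$ bound on the decoupled measure — goes through verbatim from Lemma~\ref{concen of decouple}, and the role of the hypothesis $\gamma+\gb^2<\gb_c^2$ is exactly to keep $I-(\gb^2+\gamma)\cV$ positive-definite so the final Gaussian integral converges.
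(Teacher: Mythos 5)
Your proof follows the paper's approach verbatim: average over the disorder using the Gaussian moment generating function to expose $\cQ(\vR_{12})$ and $\cQ(\vone)$, then bound the resulting expectation under the decoupled product Bernoulli measure via the Gaussian-linearization and per-site $\cosh$ argument of Lemma~\ref{concen of decouple}. The one point you leave open---the $O(1)$ diagonal corrections---actually resolves cleanly with no modification to the stated bound: one computes $\frac12\E\bigl(H_N(\mvgs_1)+H_N(\mvgs_2)\bigr)^2 = \frac{N\gb^2}{2}\bigl(\cQ(\vone)+\cQ(\vR_{12})\bigr)-\gb^2\sum_s\gl_s\gD_{s,s}^2$ while $\log(\E Z_N)^2 = 2N\log 2+\frac{N\gb^2}{2}\cQ(\vone)-\frac{\gb^2}{2}\sum_s\gl_s\gD_{s,s}^2$, so after dividing by $(\E Z_N)^2$ the net diagonal contribution is $-\frac{\gb^2}{2}\sum_s\gl_s\gD_{s,s}^2\le 0$, which is simply dropped for the upper bound; no extra constant, enlargement of $\gamma$, or sign-check-by-hope is required. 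One small correction to your commentary: the suggestion to ``keep $\cQ$ rather than switching to $\cP$'' in order to handle the diagonal terms is a red herring---the inequality $\cQ(\vR_{12})\le\cP(\vR_{12})$ is needed in any case to merge the $\gb^2$- and $\gamma$-terms into a single $\cP$ quadratic form, and it has nothing to do with the diagonal corrections, which are configuration-independent scalars.
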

\begin{proof}
	[Proof of Lemma~\ref{use-lem}] Since the Hamiltonian $H_N{(\mvgs)}$ is a \emph{centered} Gaussian field, we know that
	\begin{align}\label{eq4-h0}
		\begin{split}
			\E \sum_{\mvgs_1, \mvgs_2} & \exp\left( H_N(\mvgs_1)+ H_N(\mvgs_2)+\frac{\gamma N}{2}\cP(\vR_{12})\right)\\
			& =\sum_{\mvgs_1, \mvgs_2}\exp\left(\frac{1}{2}\E(H_N(\mvgs_1)+ H_N(\mvgs_2))^2 + \frac{\gamma N}{2}\cP(\vR_{12})\right) \\
			& \le \sum_{\mvgs_1, \mvgs_2}\exp \left( \frac{N\beta^2}{2}\left(\sum_{s,t}\lambda_s \gD_{s,t}^2\lambda_t-\frac{2}{N}\sum_s\gl_s\gD_{s,s}^2\right)+\frac{(\gb^2+\gamma)N \cP(\vR_{12})}{2}\right) \\
			& \le (\E Z_N)^2 \cdot 2^{-2N} \sum_{\mvgs_1, \mvgs_2} \exp\left(\frac{\gd N}{2} \cP(\vR_{12})\right)
		\end{split}
	\end{align}
	where $\gd =\gb^2+\gamma$, and the last two inequalities are based on the fact that
	$$
		\E Z_N = 2^N\exp\left(\frac{N\beta^2}{4}\left(\sum_{s,t}\lambda_s \gD_{s,t}^2\lambda_t-\frac{1}{N}\sum_s\gl_s\gD_{s,s}^2\right)\right),
	$$
	and
	\begin{align*}
		\E H_N(\mvgs)^2             & = \frac{N\beta^2}{2}\left(\sum_{s,t}\lambda_s \gD_{s,t}^2\lambda_t-\frac{1}{N}\sum_s\gl_s\gD_{s,s}^2\right)       \\
		\E H_N(\mvgs_1)H_N(\mvgs_2) & =\frac{N\beta^2}{2}\left(\sum_{s,t}\lambda_s R^s\gD_{s,t}^2\lambda_tR^t-\frac{1}{N}\sum_s\gl_s\gD_{s,s}^2\right).
	\end{align*}
	Next we prove that, for fixed $\mvgs_2$,
	\begin{align}\label{sech0:eq1}
		2^{-N} \sum_{\mvgs_1} \exp \left( \frac{\gd N}{2} \cP(\vR_{12})\right) = 2^{-N} \sum_{\mvgs} \exp \left( \frac{\gd N}{2} \cP(\vv)\right)
	\end{align}
	where $\vv = (\frac{1}{|I_1|} \sum_{i \in I_1} \gs_i, \frac{1}{|I_2|} \sum_{i \in I_2} \gs_i, \ldots, \frac{1}{|I_m|} \sum_{i \in I_m} \gs_i)$. Note that for $\mvxi = (\xi_1, \xi_2, \ldots, \xi_m)$, where $\xi_s = \frac{1}{|I_s|} \sum_{i\in I_s}\eta_i$, and $\eta_i \sim \text{Bernouli}(\half)$ are i.i.d, we have
	\begin{align*}
		2^{-N} \sum_{\mvgs} \exp \left( \frac{\gd N}{2} \cP(\vv)\right) & =\E \exp \left(\frac{\gd N}{2}\cP(\mvxi) \right)                                                                   \\
		                                                                & = \E \exp\left(\frac{\gd N}{2} \cQ(\mvxi,\gL^{\half}\cV\gL^{\half}) \right) = \E\E_{\vu}\exp (\sqrt{N} \vu \mvxi )
	\end{align*}
	where $\vu \sim \N(0,\gd \gL^{\half} \cV\gL^{\half})$. By a similar way as in the proof of Lemma~\ref{concen of decouple}, we have
	\begin{align*}
		\E\E_{\vu}\exp (\sqrt{N} \vu \mvxi ) & = \E_{\vu} \E\prod_{s=1}^m \prod_{i \in I_s}\exp(\sqrt{N}u_s \eta_i/|I_s|)                                         \\
		                                     & \le \E_{\vu} \prod_{s=1}^m \exp(N u_s^2/2|I_s|) = \E_{\vu} \exp(\vu^{\T}\gL^{-1}\vu /2) = \det(I-\gd\cV)^{-\half}.
	\end{align*}
	Going back to~\eqref{eq4-h0} and using $\gd=\gb^{2}+\gc$, we finish the proof.
\end{proof}
\begin{thm}\label{lower devat}
	If $\gb < \gb_c$, then there exists a constant $K$ such that for each $N>0$, and each $t>0$,
	\begin{align}
		\pr\left( \log Z_N \le N(\log 2+ {\gb^2}/{4}\cdot \cQ(\vone)) -t\right) \le K \exp(-t^2/K).
	\end{align}
\end{thm}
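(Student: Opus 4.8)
The plan is to combine the second-moment estimate of Lemma~\ref{use-lem} with Gaussian concentration of $\log Z_N$ in the disorder, in the spirit of the corresponding argument for the classical SK model. I will use three preliminary observations. First, since $\log\E Z_N=N(\log 2+\tfrac{\gb^{2}}{4}\cQ(\vone))-\tfrac{\gb^{2}}{4}\tr(\gD^{2}\gL)$, the centering $aN:=N(\log 2+\tfrac{\gb^{2}}{4}\cQ(\vone))$ agrees with $\log\E Z_N$ up to the $O(1)$ constant $c_0:=\tfrac{\gb^{2}}{4}\tr(\gD^{2}\gL)$, so it suffices to bound $\pr(\log Z_N\le\log\E Z_N-t)$. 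Second, at $h=0$ one has the \emph{deterministic} bound $Z_N\ge 2^{N}\exp(2^{-N}\sum_{\mvgs}H_N(\mvgs))=2^{N}$ by Jensen, using $\sum_{\mvgs}H_N(\mvgs)=0$; hence the event in the theorem is empty once $t>\tfrac{\gb^{2}}{4}\cQ(\vone)N$ and only $t=O(N)$ must be treated. Third, Lemma~\ref{use-lem} with $\gamma=0$ applies because $\gb<\gb_c$ gives $\gb^{2}\norm{\cV}<1$, and yields $\E Z_N^{2}\le A(\E Z_N)^{2}$ with $A:=\det(I-\gb^{2}\cV)^{-1/2}<\infty$.

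From the second moment, a Cauchy-Schwarz splitting of $\E Z_N=\E[Z_N;Z_N\ge\tfrac12\E Z_N]+\E[Z_N;Z_N<\tfrac12\E Z_N]$ gives the constant-probability lower bound $\pr(Z_N\ge\tfrac12\E Z_N)\ge(4A)^{-1}=:p_0>0$, independent of $N$; on this event $\log Z_N\ge\log\E Z_N-\log 2$. I also need to control the gradient of $\log Z_N$. Viewing $\log Z_N$ as a function of the standardized Gaussian disorder, one computes $\norm{\nabla\log Z_N}^{2}=\tfrac{\gb^{2}}{2}\bigl(N\la\cQ(\ovR_{12})\ra-\sum_{s}\gl_{s}\gD_{ss}^{2}\bigr)\le\tfrac{\gb^{2}}{2}N\la\cP(\ovR_{12})\ra$, using $\abs{\cQ}\le\cP$ and $\vq=0$. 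Applying Lemma~\ref{use-lem} with $\gamma\in(0,\gb_c^{2}-\gb^{2})$, using $e^{x}\ge 1+x$ on the coupling factor $e^{\frac{\gamma N}{2}\cP(\vR_{12})}$ together with $\E Z_N^{2}\ge(\E Z_N)^{2}$, and optimizing over $\gamma$, yields $\E[Z_N^{2}\la\cP(\ovR_{12})\ra]\le\tfrac{K_1}{N}(\E Z_N)^{2}$ and hence $\E[Z_N^{2}\norm{\nabla\log Z_N}^{2}]\le K_2(\E Z_N)^{2}$; in particular $\E[\norm{\nabla\log Z_N}^{2}\mid G]\le 4K_2/p_0$ on the event $G:=\{Z_N\ge\tfrac12\E Z_N\}$.

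Thus on $G$, which has probability at least $p_0$, the function $\log Z_N$ exceeds $\log\E Z_N-\log 2$ and has average squared gradient $O(1)$, whereas globally it is only $\gb\sqrt{N/2}\,\norm{\gD}_{\infty}$-Lipschitz. Feeding this into a Gaussian ``concentration away from a bad set'' estimate --- localizing to the region where $\la\cP(\ovR_{12})\ra=O(1/N)$ and arguing that an excursion of $\log Z_N$ a distance $t$ below $\log\E Z_N$ drives the disorder into that rare, gradient-controlled region --- upgrades the bound to $\pr(\log Z_N\le\log\E Z_N-t)\le K\exp(-t^{2}/K)$ with $K$ depending only on $\gb,\gD^{2},\gL$. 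In the complementary regime of $t$ comparable to $N$, where this estimate is weakest, one instead uses the deterministic bound $\log Z_N\ge N\log 2$ from the first paragraph together with an elementary lower-tail bound for $\max_{\mvgs}H_N(\mvgs)$ based on the existence of exponentially many nearly orthogonal spin configurations. Re-absorbing $c_0$ into $K$ then completes the proof.

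The main obstacle is the step in the previous paragraph: turning the \emph{$Z_N^{2}$-weighted} gradient bound into a genuine sub-Gaussian lower tail whose constant does \emph{not} depend on $N$. A direct application of Gaussian concentration loses a factor $N$ in the exponent, because $\norm{\nabla\log Z_N}$ is $O(\sqrt N)$ in the worst case; the whole content is that the disorder realizations responsible for $\log Z_N$ being far below $\log\E Z_N$ are precisely those on which the overlap quadratic form --- and therefore the gradient --- is of order $1/N$, which is exactly what the coupled second moment of Lemma~\ref{use-lem} quantifies. One must also be careful that the argument uses only Lemma~\ref{use-lem} and Gaussian concentration and does not implicitly invoke the overlap concentration of Theorem~\ref{ovp no h}, which is established only afterwards.
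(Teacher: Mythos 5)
Your proposal assembles the right ingredients---the second-moment estimate of Lemma~\ref{use-lem}, the identification of $N\la\cQ(\vR_{12})\ra$ with $\norm{\nabla\log Z_N}^2$ up to a constant (the content of Lemma~\ref{lemm2-h0}), and a Gaussian ``distance to a good set'' inequality---and the overall strategy matches the paper's. However, there is a genuine gap, and you correctly flag it yourself: the step ``feeding this into a Gaussian concentration-away-from-a-bad-set estimate'' is exactly where the argument must be made precise, and it is not. Two issues. First, your route via $\E[Z_N^2\la\cP(\ovR_{12})\ra]\le \frac{K_1}{N}(\E Z_N)^2$ only yields a \emph{conditional average} gradient bound $\E[\norm{\nabla\log Z_N}^2\mid G]=O(1)$ on $G=\{Z_N\ge\E Z_N/2\}$, whereas what is needed is a \emph{pointwise} bound on a set of positive probability. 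The paper gets this directly in Lemma~\ref{lemm1-h0} by applying Markov's inequality to $Z_N^2\la\exp(\tfrac{\gamma N}{2}\cQ(\vR_{12}))\ra$: this produces a set $B$ with $\pr(\mveta\in B)\ge 1/K$ on which simultaneously $Z_N\ge\E Z_N/2$ and $N\la\cQ(\vR_{12})\ra\le K$ hold \emph{pointwise}. (You could salvage your approach by an extra Markov step on $G$, but then you are effectively re-deriving Lemma~\ref{lemm1-h0}.)

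Second and more importantly, you never prove the one-sided Lipschitz estimate that actually connects the distance $d(\mveta,B)$ to the drop in $\log Z_N$. The key observation is that for any disorder $\mveta$ and any $\mveta'\in B$,
\[
Z_N(\mveta)=Z_N(\mveta')\Bigl\la\exp\Bigl(\tfrac{\gb}{\sqrt N}\sum_{i<j}\gD_{s,t}(\eta_{ij}-\eta'_{ij})\gs_i\gs_j\Bigr)\Bigr\ra' \ge Z_N(\mveta')\exp\Bigl(\tfrac{\gb}{\sqrt N}\sum_{i<j}\gD_{s,t}(\eta_{ij}-\eta'_{ij})\la\gs_i\gs_j\ra'\Bigr)
\]
by Jensen under the $\mveta'$-Gibbs measure, and then Cauchy--Schwarz together with $\sum_{i<j}\gD_{s,t}^2\la\gs_i\gs_j\ra'^2\le N^2\la\cQ(\vR_{12})\ra'\le K_1 N$ (using $\mveta'\in B$) gives $\log Z_N(\mveta)\ge \log Z_N(\mveta')-\gb K_1^{1/2}\,d(\mveta,\mveta')$. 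Combined with $Z_N(\mveta')\ge\E Z_N/2$ on $B$, this shows $\{\log Z_N\le N(\log 2+\tfrac{\gb^2}{4}\cQ(\vone))-t\}\subset\{d(\mveta,B)\ge (t-K)/K\}$, at which point Lemma~\ref{lemm-talagrand} finishes. Without this Jensen step your ``excursion drives the disorder into the gradient-controlled region'' heuristic does not produce an $N$-independent sub-Gaussian constant, which is the entire difficulty; a direct Gaussian concentration argument loses a factor of $N$ precisely as you observe. Your final remark about the $t\asymp N$ regime and near-orthogonal configurations is unnecessary: once the Lipschitz-to-$B$ bound is in hand, Lemma~\ref{lemm-talagrand} handles all $t>0$ uniformly.
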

The following lemma is going to tell us how to choose the $K$ appropriately.
\begin{lem}\label{lemm1-h0}
	If $\gb< \gb_c$, we have
	\begin{align}
		\pr \left(Z_N \ge \E Z_N/2, N\la \cQ(\vR_{12})\ra \le K \right)\ge \frac{1}{K}.
	\end{align}
\end{lem}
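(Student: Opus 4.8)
The plan is a standard second-moment argument in which Lemma~\ref{use-lem} supplies all the nontrivial input; I would organize it in three steps and then optimize the free constant.

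\emph{Step 1 (a lower bound for $\pr(Z_N\ge\E Z_N/2)$).} Taking $\gamma=0$ in Lemma~\ref{use-lem} gives the second-moment estimate $\E Z_N^2\le(\E Z_N)^2\det(I-\gb^2\cV)^{-1/2}$; since $\gb<\gb_c$ is equivalent to $\gb^2\rho(\cV)<1$, i.e.\ $I-\gb^2\cV$ positive-definite, the factor $\det(I-\gb^2\cV)^{-1/2}$ is a finite constant ($\ge1$) independent of $N$. The Paley--Zygmund inequality then yields $\pr(Z_N\ge\E Z_N/2)\ge\tfrac14\,(\E Z_N)^2/\E Z_N^2\ge c_0:=\tfrac14\det(I-\gb^2\cV)^{1/2}>0$, again $N$-independent.

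\emph{Step 2 (a first-moment bound on the event $\{Z_N\ge\E Z_N/2\}$).} On that event $Z_N^{-2}\le4(\E Z_N)^{-2}$, so writing $\la\cP(\vR_{12})\ra=Z_N^{-2}\sum_{\mvgs^1,\mvgs^2}\cP(\vR_{12})\exp(H_N(\mvgs^1)+H_N(\mvgs^2))$, it suffices to control $\E\sum_{\mvgs^1,\mvgs^2}N\cP(\vR_{12})\exp(H_N(\mvgs^1)+H_N(\mvgs^2))$. Using the elementary inequality $x\le\tfrac2\gamma e^{\gamma x/2}$ for $x\ge0$ (applicable since $\cP\ge0$) with $x=N\cP(\vR_{12})$, this is at most $\tfrac2\gamma\,\E\sum_{\mvgs^1,\mvgs^2}\exp\bigl(H_N(\mvgs^1)+H_N(\mvgs^2)+\tfrac{\gamma N}{2}\cP(\vR_{12})\bigr)$, which by Lemma~\ref{use-lem} is $\le\tfrac2\gamma(\E Z_N)^2\det(I-(\gb^2+\gamma)\cV)^{-1/2}$ provided $\gamma<\gb_c^2-\gb^2$. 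Fixing $\gamma:=(\gb_c^2-\gb^2)/2$, we obtain $\E\bigl[N\la\cP(\vR_{12})\ra\,\ind_{\{Z_N\ge\E Z_N/2\}}\bigr]\le M$ for a constant $M<\infty$ depending only on $\gb,\gD^2,\gL$. Since $|\cQ(\vx)|\le\cP(\vx)$ (as in Lemma~\ref{holder lem}) gives $\la\cQ(\vR_{12})\ra\le\la\cP(\vR_{12})\ra$, Markov's inequality yields $\pr\bigl(Z_N\ge\E Z_N/2,\ N\la\cQ(\vR_{12})\ra>K\bigr)\le M/K$ for every $K>0$.

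\emph{Step 3 (combine and choose $K$).} From Steps 1 and 2, $\pr\bigl(Z_N\ge\E Z_N/2,\ N\la\cQ(\vR_{12})\ra\le K\bigr)\ge c_0-M/K$; taking $K:=\max\{2M/c_0,\,2/c_0\}$ makes the right side at least $c_0/2\ge1/K$, which is the claim (this is the $K$ one then feeds into Theorem~\ref{lower devat}). There is no serious obstacle here; the two points requiring care are (i) checking that $c_0$, $M$, $K$ are genuinely $N$-independent, which is exactly what the $N$-free exponential-moment bound of Lemma~\ref{use-lem} provides, and (ii) in the indefinite case, consistently dominating the sign-indefinite form $\cQ$ by the nonnegative form $\cP$ so that the exponential-moment estimate applies — both handled cleanly by the linearization $x\le\tfrac2\gamma e^{\gamma x/2}$.
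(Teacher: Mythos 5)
Your proposal is correct and follows essentially the same path as the paper: a Paley--Zygmund lower bound on $\pr(Z_N\ge\E Z_N/2)$, the $N$-independent exponential-moment bound of Lemma~\ref{use-lem}, and a Markov-plus-union argument to combine the two events. The only real difference is a matter of bookkeeping: the paper applies Markov directly to the random variable $Z_N^2\la\exp(\tfrac{\gamma N}{2}\cQ(\vR_{12}))\ra$ and then converts the resulting exponential bound $\la\exp(\tfrac{\gamma N}{2}\cQ(\vR_{12}))\ra\le 4K$ into a linear bound $N\la\cQ(\vR_{12})\ra\le K'$ at the very end via Jensen, whereas you linearize earlier with $x\le\tfrac{2}{\gamma}e^{\gamma x/2}$ and apply Markov to the first moment $\E[N\la\cP(\vR_{12})\ra\,\ind_{\{Z_N\ge\E Z_N/2\}}]$. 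Both versions are valid and produce the same kind of $N$-free constant; yours makes slightly more explicit the role of the domination $|\cQ|\le\cP$ in the indefinite case, which the paper's proof uses more implicitly when it invokes Lemma~\ref{use-lem}.
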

\begin{proof}
	[Proof of Lemma~\ref{lemm1-h0}] Notice that
	\[Z_N^2\la \exp(\frac{\gamma N}{2} \cQ(\vR_{12}))\ra = \sum_{\mvgs_1, \mvgs_2} \exp\left( H_N(\mvgs_1)+ H_N(\mvgs_2) + \frac{\gamma N}{2}\cQ(\vR_{12})\right).\]
	Thus by Lemma~\ref{use-lem}, we have
	\begin{align*}
		\E\left( Z_N^2\la \exp(\frac{\gamma N}{2} \cQ(\vR_{12}))\ra \right) \le (\E Z_N)^2 \det(I-(\gb^2+\gamma)\cV)^{-\half}
	\end{align*}
	and
	\[
		\pr(Z_{N}\ge \E Z_{N}/2) \ge (\E Z_N)^{2}/4\E Z_{N}^{2}\ge \det(I-\gb^2\cV)^{\half}/4.
	\]

	By Markov's inequality,
	\begin{align*}
		\pr \left( Z_N^2\la \exp(\frac{\gamma N}{2} \cQ(\vR_{12}))\ra \le t (\E Z_N)^2 \right) \ge 1- \frac1t \det(I-(\gb^2+\gamma)\cV)^{-\half}
	\end{align*}
	and
	\begin{align*}
		\pr \bigl( Z_N^2 & \la \exp(\frac{\gamma N}{2} \cQ(\vR_{12}))\ra \le t (\E Z_N)^2, Z_N \ge \E Z_N/2\bigr)                                            \\
		                 & \ge \pr \left( Z_N^2\la \exp(\frac{\gamma N}{2} \cQ(\vR_{12}))\ra \le t (\E Z_N)^2 \right)+\pr \left( Z_N \ge \E Z_N/{2}\right)-1 \\
		                 & \ge \pr \left( Z_N \ge \E Z_N/{2}\right)- \frac1t\det(I-(\gb^2+\gamma)\cV)^{-\half}.
	\end{align*}
	Based on this inequality, we can choose some large $K$ such that,
	\begin{align*}
		\pr \left( Z_N^2\la \exp(\frac{\gamma N}{2} \cQ(\vR_{12}))\ra \le K (\E Z_N)^2, Z_N \ge \E Z_N/2\right) \ge \frac{1}{K}.
	\end{align*}
	When $Z_N \ge \E Z_N/{2}$, $Z_N^2\la \exp(\frac{\gamma N}{2} \cQ(\vR_{12}))\ra \le K (\E Z_N)^2 $ implies that
	\[\left\la \exp(\frac{\gamma N}{2} \cQ(\vR_{12}))\right\ra \le 4K \]
	and hence $\la N\cQ(\vR_{12}))\ra \le K$. The constant $K$ in different expressions are different, here we abuse the notation a bit.
\end{proof}
\begin{lem}\label{lemm2-h0}
	The following identity:
	\begin{align}
		N^2 \la \cQ(\vR_{12}) \ra =\sum_{s,t}\sum_{i\in I_s} \sum_{j\in I_t}\gD_{s,t}^2 \la\gs_i \gs_j \ra^2\ge 0
	\end{align}
	holds.
\end{lem}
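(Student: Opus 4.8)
The plan is to unfold both sides into sums over single spins and match them term by term, the only non-algebraic input being the factorization of the two–replica Gibbs average into single–replica averages.

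First I would rewrite $\cQ(\vR_{12})$ using the definition $\cQ(\vx) = \vx^{\T}\gL\gD^2\gL\vx$ together with the elementary identity $\gl_s R_{12}^{(s)} = \tfrac{|I_s|}{N}\cdot\tfrac{1}{|I_s|}\sum_{i\in I_s}\gs_i^1\gs_i^2 = \tfrac1N\sum_{i\in I_s}\gs_i^1\gs_i^2$. Substituting,
\begin{align*}
	\cQ(\vR_{12}) &= \sum_{s,t}\gD_{s,t}^2\Bigl(\tfrac1N\sum_{i\in I_s}\gs_i^1\gs_i^2\Bigr)\Bigl(\tfrac1N\sum_{j\in I_t}\gs_j^1\gs_j^2\Bigr) \\
	&= \frac{1}{N^2}\sum_{s,t}\sum_{i\in I_s}\sum_{j\in I_t}\gD_{s,t}^2\,\gs_i^1\gs_i^2\gs_j^1\gs_j^2,
\end{align*}
so that $N^2\cQ(\vR_{12}) = \sum_{s,t}\sum_{i\in I_s}\sum_{j\in I_t}\gD_{s,t}^2\,\gs_i^1\gs_i^2\gs_j^1\gs_j^2$ holds identically in the spin configurations $\mvgs^1,\mvgs^2$.

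Next I would apply the Gibbs average $\la\cdot\ra$ over the two independent replicas and pull the deterministic coefficients $\gD_{s,t}^2$ outside the average. Since $\mvgs^1$ and $\mvgs^2$ are i.i.d.\ under the product Gibbs measure, $\la\gs_i^1\gs_i^2\gs_j^1\gs_j^2\ra = \la\gs_i^1\gs_j^1\ra\,\la\gs_i^2\gs_j^2\ra = \la\gs_i\gs_j\ra^2$, where $\la\gs_i\gs_j\ra$ denotes the single–replica Gibbs average. This yields the claimed identity, and non-negativity is then immediate: every summand is the product of $\gD_{s,t}^2\ge0$ (Assumption~\ref{ass:0}) with the square $\la\gs_i\gs_j\ra^2\ge0$.

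There is no serious obstacle here; the computation is routine. The one point worth flagging is that for \indf~$\gD^2$ the quadratic form $\cQ(\vR_{12})$ need not be pointwise non-negative, so the non-negativity of its Gibbs average genuinely relies on both the replica factorization and the non-negativity of the entries of $\gD^2$ — which is precisely what makes this identity the right tool in the $h=0$ estimates that follow.
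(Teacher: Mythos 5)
Your proof is correct and follows essentially the same route as the paper: expand the quadratic form $\cQ(\vR_{12})$ into a double sum over individual spins using $\gl_s/|I_s|=1/N$, apply the two-replica Gibbs average, and invoke replica factorization $\la\gs_i^1\gs_i^2\gs_j^1\gs_j^2\ra=\la\gs_i\gs_j\ra^2$. Your closing remark that non-negativity hinges on the entries $\gD_{s,t}^2\ge0$ rather than positive-definiteness of $\gD^2$ is a useful observation that the paper leaves implicit.
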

\begin{proof}
	[Proof of Lemma~\ref{lemm2-h0}] Recall that $\cQ(\vR_{12})$ is the quadratic form of $\vR_{12}$ with matrix $\gL\gD^2\gL$, then we have
	\begin{align*}
		N^2 \cQ(\vR_{12}) & = N^2\sum_{s,t} R_{12}^s \gl_s \gD_{s,t}^2 \gl_t R_{12}^t                                                                                                                                                                              \\
		                  & = N^2\sum_{s,t} \frac{1}{|I_s|} \sum_{i \in I_s} \gs_i^1 \gs_i^2 \gl_s \gD_{s,t}^2 \gl_t \frac{1}{|I_t|} \sum_{j \in I_t} \gs_j^1 \gs_j^2 = \sum_{s,t} \sum_{i \in I_s} \gs_i^1 \gs_i^2 \gD_{s,t}^2 \sum_{j \in I_t} \gs_j^1 \gs_j^2 ,
	\end{align*}
	After applying the Gibbs average $\la \cdot \ra$ on both sides, for RHS we have $\la \gs_i^1 \gs_j^1 \gD_{s,t}^2\gs_i^2 \gs_j^2\ra = \gD_{s,t}^2 \la \gs_i \gs_j \ra^2$, then $N^2\ \la \cQ(\vR_{12}) \ra = \sum_{s,t}\sum_{i\in I_s} \sum_{j\in I_t}\gD_{s,t}^2 \la\gs_i \gs_j \ra^2$.
\end{proof}
\begin{lem}
	[{\cite{Tal03}*{Lemma~2.2.11}}]\label{lemm-talagrand} Consider a closed subset $B$ of $\bR^M$, and set
	\[ d(\vx,B) = \inf\{d(\vx,\vy);\vy \in B\}, \]
	as the Euclidean distance from $\vx$ to $B$. Then for $t>0$, we have
	\begin{align}\label{eq0-h0}
		\pr\left(d(\mveta,B) \ge t+ 2 \sqrt{\log\frac{2}{\pr(\mveta\in B)}} \right) \le 2 \exp(-t^2/4)
	\end{align}
	where $\mveta = (\eta_1,\eta_2,\ldots, \eta_m)$ and $\eta_i \sim \N(0,1)$ are i.i.d.
\end{lem}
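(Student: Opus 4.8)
The plan is to derive this from the Gaussian isoperimetric inequality, after which everything is bookkeeping with Gaussian tails. Recall the statement of that inequality: if $\mveta$ is standard Gaussian on $\bR^M$, $\gF$ is the standard normal distribution function, and $A\subseteq\bR^M$ is Borel with $\pr(\mveta\in A)=\gF(a)$ (so $a=\gF^{-1}(\pr(\mveta\in A))$), then its closed $u$-enlargement $A_u:=\{\vx\colon d(\vx,A)\le u\}$ satisfies $\pr(\mveta\in A_u)\ge\gF(a+u)$ for every $u>0$. One could instead invoke the concentration inequality for $1$-Lipschitz functions of a Gaussian vector, since $\vx\mapsto d(\vx,B)$ is $1$-Lipschitz, but isoperimetry produces the stated constants most directly.

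First I would apply the inequality with $A=B$ and set $p:=\pr(\mveta\in B)$, $a:=\gF^{-1}(p)$. Since $B$ is closed, $B_u$ is closed and $\{d(\mveta,B)\ge u\}\subseteq\bR^M\setminus B_{u'}$ for every $u'<u$; letting $u'\uparrow u$ and using isoperimetry gives
\[
\pr\bigl(d(\mveta,B)\ge u\bigr)\le 1-\gF(a+u)=\gF(-a-u)\qquad\text{for all }u>0.
\]
Next I would control $-a$ in terms of $p$. If $p\ge\tfrac12$ then $a\ge0$, so $-a\le0\le 2\sqrt{\log(2/p)}$ (note $\log(2/p)\ge\log2>0$). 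If $p<\tfrac12$ then $a<0$, and the elementary tail bound $\gF(-x)\le\tfrac12 e^{-x^2/2}$ for $x\ge0$, applied to $p=\gF(a)=\gF(-|a|)$, yields $p\le\tfrac12 e^{-a^2/2}$, hence $a^2\le 2\log(1/(2p))\le 4\log(2/p)$, \ie $-a=|a|\le 2\sqrt{\log(2/p)}$. In both cases $-a\le 2\sqrt{\log(2/p)}$.

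Finally, taking $u=t+2\sqrt{\log(2/p)}$ in the displayed bound gives $-a-u\le 2\sqrt{\log(2/p)}-u=-t$, so
\[
\pr\Bigl(d(\mveta,B)\ge t+2\sqrt{\log(2/p)}\Bigr)\le\gF(-t)\le\tfrac12 e^{-t^2/2}\le 2e^{-t^2/4}\qquad(t>0),
\]
which is the claim, in fact with considerable slack. There is no serious obstacle here: the only genuine input is the Gaussian isoperimetric inequality, which is classical, and the remaining steps are the measure-theoretic passage to the closed enlargement and the routine Gaussian tail estimate needed to reproduce the constants $2$ and $\tfrac14$ exactly as stated.
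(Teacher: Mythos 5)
Your proof is correct. Note, however, that the paper does not prove this lemma at all: it is imported verbatim as Lemma~2.2.11 of Talagrand's book, so there is no in-paper argument to compare against. Your derivation from the Gaussian isoperimetric inequality is complete and handles the only delicate points properly: the passage from the open event $\{d(\mveta,B)\ge u\}$ to the closed enlargement $B_{u'}$ with $u'\uparrow u$, and the case split on $p=\pr(\mveta\in B)\gtrless \tfrac12$ when bounding $-\gF^{-1}(p)\le 2\sqrt{\log(2/p)}$ (your chain $a^2\le 2\log(1/(2p))\le 4\log(2/p)$ checks out since $\log(1/(2p))\le\log(2/p)$ and $\log(2/p)>0$). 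For what it is worth, Talagrand's own proof of the cited lemma takes a different route: it first establishes the exponential moment bound $\E\exp\bigl(d(\mveta,B)^2/4\bigr)\le 1/\pr(\mveta\in B)$ by a symmetrization/rotation argument, and then applies Markov's inequality with $s=t+2\sqrt{\log(2/p)}$ and $(a+b)^2\ge a^2+b^2$. That route avoids invoking isoperimetry (a strictly stronger input) and explains where the specific constants $2$ and $1/4$ come from, whereas your route gets them with room to spare; both are legitimate, and yours is arguably the more standard modern derivation.
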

Now we are ready to prove the Theorem~\ref{lower devat}
\begin{proof}
	[Proof of Theorem~\ref{lower devat}] Recall the Hamiltonian with $h=0$ is
	\[ H_N(\mvgs) = \frac{\gb}{\sqrt{N}} \sum_{i<j} g_{ij}\gs_i\gs_j \]
	where $\E g_{ij}^2 = \gD^2_{s,t}$ for $i\in I_s, j\in I_t$. In the following content, we let $g_{ij} = \gD_{s,t} \eta_{ij}$, where $\eta_{ij} \sim \N(0,1)$. Let $M=N(N-1)/2$, and consider Gaussian $\mveta = (\eta_{ij})_{i<j}$, in this case, $\mveta \in \bR^M$. We understand $H_N(\mvgs), Z_N$ as functions of $\mveta$. By Lemma~\ref{lemm1-h0}, for some suitably large $K=K_1$, there is a subset $B \subset \bR^M$ with
	\[ \{\mveta \in B \} = \{ Z_N(\mveta) \ge \E Z_N/2; N\la \cQ(\vR_{12})\ra \le K_1 \} \]
	i.e. the set $B$ characterizes the event on RHS, and also $\pr(\mveta \in B) \ge \frac 1 {K_1}$. Next we will prove
	\begin{align}\label{eq1-h0}
		\log Z_N(\mveta) \ge N\left(\log 2 + \frac{\gb^2}{4}\cQ(\bf1)\right)- K_1(1+d(\mveta,B))
	\end{align}
	For $\mveta' \in B$, we have 
	\[
	\log Z_N(\mveta') \ge \log \frac 1 2 \E Z_N = \frac{\gb^2}{4}\left(N\cQ(\mathbf{1})- \sum_{s=1}^m\gl_s\gD_{s,s}^2\right)+(N-1)\log 2.
	\] To prove~\eqref{eq1-h0}, it is enough to show that
	\begin{align}\label{eq2-h0}
		\log Z_N(\mveta) \ge \log Z_N(\mveta') - K_1 d(\mveta,\mveta')
	\end{align}
	for all $\mveta'\in B$. Here $K_{1}$ must be bigger than $\log 2 +\frac{\beta^2}{4}\sum_{s=1}^m\gl_s\gD_{s,s}^2$. Notice that
	\begin{align*}
		Z_N(\mveta) = Z_N(\mveta') \left\la \exp \biggl( \frac{\gb}{\sqrt{N}} \sum_{s,t}\sum_{\substack{i \in I_s, j\in I_t, \\
				i<j}}\gD_{s,t}(\mveta_{ij}-\mveta_{ij}')\gs_i\gs_j \biggr) \right \ra'
	\end{align*}
	where $\la \cdot \ra'$ denotes the Gibbs average with disorders $\mveta'$. Since there is an exponential part, it's natural to apply Jensen's inequality,
	\[ \left\la \exp \biggl( \frac{\gb}{\sqrt{N}} \sum_{s,t}\sum_{\substack{i \in I_s, j\in I_t,\\
				i<j}}\gD_{s,t}(\mveta_{ij}-\mveta_{ij}')\gs_i\gs_j \biggr) \right \ra' \ge \exp \biggl( \frac{\gb}{\sqrt{N}} \sum_{s,t}\sum_{\substack{i \in I_s, j\in I_t,\\
				i<j}}\gD_{s,t}(\mveta_{ij}-\mveta_{ij}')\la \gs_i\gs_j \ra' \biggr) \]
	where we use $g_{ij}=\gD_{s,t}\eta_{ij}$ to rewrite the Hamiltonian, then applying Cauchy-Schwarz inequality and by Lemma~\ref{lemm2-h0}, we have
	\[ \sum_{s,t}\sum_{\substack{i \in I_s, j\in I_t,\\
				i<j}}\gD_{s,t}(\mveta_{ij}-\mveta_{ij}')\la \gs_i\gs_j \ra' \ge -d(\mveta,\mveta')(N^2\la\cQ(\vR_{12})\ra')^{\half} \ge -K_1\sqrt{N}d(\mveta,\mveta') \]
	where $d(\mveta,\mveta') = \sum_{i<j}(\eta_{ij}-\eta_{ij}')^2$. The last inequality is based on $\mveta' \in B$, then this proves~\eqref{eq2-h0}, and hence~\eqref{eq1-h0}. From that, we have
	\[ \log Z_N(\mveta) \le N\left(\frac{\gb^2}{4}\cQ(\mathbf{1}) + \log 2\right) -t \Rightarrow d(\mveta,B) \ge \frac{t-K_1}{K_1} \ge 2 \sqrt{\log \frac{2}{\pr(\mveta \in B)}} +\frac{t-K_2}{K_1} \]
	then by~\eqref{eq0-h0} in Lemma~\ref{lemm-talagrand}, it follows
	\begin{align*}
		\pr \left( \log Z_N(\mveta) \le N \left( \frac{\gb^2}{4}\cQ(\mathbf{1}) +\log 2\right)-t\right) \le 2 \exp \left( -\frac{(t-K_2)^2}{4K_1^2} \right)
	\end{align*}
	for $t \ge K_2$. Then for the RHS, if we take $K$ large enough, when $t\ge K_2$,
	\[
		2 \exp \left( -\frac{(t-K_2)^2}{4K_1^2} \right) \le K \exp \left(-\frac{t^2}{K} \right).
	\]
	On the other hand, when $0 \le t \le K_2$, we have $ K \exp \left(-\frac{t^2}{K} \right) \ge 1$. Therefore, in any case, we proved
	\[ \pr \left( \log Z_N(\mveta) \le N \left( \frac{\gb^2}{4}\cQ(\mathbf{1}) +\log 2\right)-t\right) \le K \exp \left(-\frac{t^2}{K} \right). \]
\end{proof}

Now we collect all the above to prove the main theorem.
\subsection{Proof of Theorem~\ref{ovp no h}}
By Cauchy-schwarz w.r.t the measure $\la \cdot \ra$
\begin{align*}
	\left \la \exp\left(\frac{\gb_c^2-\gb^2}{8}N \cP(\vR_{12})\right) \right \ra & \le \left\la \exp\left(\frac{\gb_c^2-\gb^2}{4}N \cP(\vR_{12})\right) \right \ra^{\half}                                \\
	                                                                             & = \frac{1}{Z_N}\left(Z_N^2 \left \la\exp\left(\frac{\gb_c^2-\gb^2}{4}N \cP(\vR_{12})\right) \right \ra \right)^{\half}
\end{align*}
then apply Cauchy-schwarz again for $\E$,
\begin{align*}
	\E \left \la \exp\left(\frac{\gb_c^2-\gb^2}{8}N \cP(\vR_{12})\right)\right \ra & \le \left(\E \frac{1}{Z_N^2} \right)^{\half} \left[\E \left( Z_N^2 \left \la \exp\left(\frac{\gb_c^2-\gb^2}{4}N \cP(\vR_{12})\right)\right \ra \right)\right]^{\half} \\
	                                                                               & \le \left(\E \frac{1}{Z_N^2} \right)^{\half} (K\E(Z_N)^2)^{\half}
	= K \left(\E \frac{(\E Z_N)^2}{Z_N^2}\right)^{\half}
\end{align*}
where the second inequality is due to Lemma~\ref{use-lem}. By Theorem~\ref{lower devat},
\begin{align}
	\pr \left(\frac{\E Z_N}{Z_N} >t\right) \le K \exp(-(\log t)^2/K)
\end{align}
then we have $\E \frac{(\E Z_N)^2}{Z_N^2} <K$ by standard tail estimates. \hfill\qed

\section{MSK cavity solution}\label{sec:cavity}
For the classical SK model, the cavity method is an induction on the number of spins. In this section, we generalize the cavity method to the MSK model and derive a linear system for the variance-covariance matrices of the overlap vectors. More specifically, to compare the system with $N$ spins and $(N-1)$ spins, we need to choose a spin to be decoupled from others.

Since the classical SK model is the one-species case of the MSK model, the spin can be chosen uniformly. For the multi-species case, decoupling a spin now depends on the structure of the species. However, we can choose a species $s \in \{1,2,\ldots,m\}$ first with probability ${\gl_s}$, then select a spin uniformly inside that species to be decoupled, then compare this decoupled system and the original one. For convenience, we denote the released spin as $\eps := \gs_{s_{\star}}$, where $s_{\star}$ is the index of last spin in species $s$ for a configuration $\mvgs$. Once the species is determined, we take the convention to decouple the last spin in the chosen species because of symmetry inside a particular species. Note that this is equivalent to choosing a species, u.a.r.~from all spins.

Let $\gd$ represent the species to be chosen, which is a random variable taking value in $\{1,2,\ldots,m\}$ with probability $\mathbb{P}(\gd = s) = \gl_s$. Given $\gd=\tilde s$, we decouple the last spin $\gs_{\tilde s_{\star}}$ in that species, then the interpolated Hamiltonian between the original system and the decoupled system is
\begin{align}\label{cavity-H}
	H_t^{\tilde s}(\mvgs) = H_{\tilde s_{\star}^-}(\mvrho)+ \gs_{\tilde s_{\star}} \biggl( \sqrt{t} \cdot \frac{\gb}{\sqrt{N}}\sum_{i \neq \tilde s_{\star}}g_{i \tilde s_{\star}}\gs_i + \sqrt{1-t}\cdot \gb \eta \sqrt{Q^{\tilde s}} + h\biggr)
\end{align}
where $H_{\tilde s_{\star}^-}(\mvrho)= \frac{1}{\sqrt{N}}\sum_{i<j}g_{ij}\rho_i\rho_j + \sum_{i\neq \tilde s_{\star}}\rho_i$, and $\mvrho$ is a ($N-1$)-dimensional vector of spins in $\mvgs$ without $\gs_{\tilde s_{\star}}$.

Recall that the overlap vector between two replicas $\gs^l,\gs^{l^{\prime}}$ in MSK model is defined as
\begin{align}
	\vR_{ll^{\prime}} = (R_{ll^{\prime}}^{(1)}, R_{ll^{\prime}}^{(2)}, \cdots, R_{ll^{\prime}}^{(m)})^{\T}
\end{align}
where each coordinate in the above vector represents the marginal overlap in the corresponding species. Similarly,
\begin{align}
	\vR_{ll^{\prime}}^{(\tilde s-)} = (R_{ll^{\prime}}^{(1)}, R_{ll^{\prime}}^{(2)}, \cdots, R_{ll^{\prime}}^{(\tilde s-)},\cdots R_{ll^{\prime}}^{(m)})^{\T}
\end{align}
where $R_{ll^{\prime}}^{(\tilde s-)} = R_{ll^{\prime}}^{(\tilde s)} - \frac{1}{|I_\gd|}\eps_l\eps_{l^{\prime}}$, and $\eps_l=\mvgs^l_{\tilde s_{\star}}$ denotes the released spin in $\mvgs^l$.

In the following, $\gn_{t,\tilde s}(\cdot)$ denotes the expectation over the Gibbs measure and disorder $g_{ij}$ associated with the Hamiltonian~\eqref{cavity-H}. In particular, $\gn_{1,\tilde s}(\cdot)$ corresponds to the original system and does not depend on $\tilde s$, but $\gn_{0,\tilde s}$ and $\gn_{t,\tilde s}$ depend on $\tilde s$ since they both involve the decoupling procedure. In the rest of this section, we state the results for each fixed $\gd=\tilde s$, unless we state clearly otherwise.
\begin{lem}\label{prod property}
	For any $f^{-}$ on $\gS_{N-1}^n$, where $\gS_{N-1}$ does not contain the spin $\gs_{\tilde{s}_{*}}$, and $I \subset \{1,2,\cdots,n\}$, we have
	\begin{align}
		\gn_{0,\tilde s}\biggl(f^{-} \prod_{i\in I} \eps_i\biggr) = \gn_{0,\tilde s}(f^{-})\cdot \E(\tanh Y^{\tilde s})^{|I|}
	\end{align}
	where $Y^{\tilde s} = \gb \eta \sqrt{Q^{\tilde s}}+h$, and $|I|$ is the cardinality of the set $I$.
\end{lem}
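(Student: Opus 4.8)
The plan is to exploit the fact that at $t=0$ the interpolated Hamiltonian~\eqref{cavity-H} fully decouples the released spin. First I would set $t=0$ in~\eqref{cavity-H}: the term $\sqrt{t}\cdot\frac{\gb}{\sqrt N}\sum_{i\neq\tilde s_{\star}}g_{i\tilde s_{\star}}\gs_i$ vanishes, leaving
\[
H_0^{\tilde s}(\mvgs) = H_{\tilde s_{\star}^{-}}(\mvrho) + \eps\, Y^{\tilde s},\qquad \eps = \gs_{\tilde s_{\star}},\quad Y^{\tilde s} = \gb \eta \sqrt{Q^{\tilde s}}+h,
\]
where $H_{\tilde s_{\star}^{-}}(\mvrho)$ involves only $\mvrho$ and the disorder $(g_{ij})_{i,j\neq\tilde s_{\star}}$, while $\eps\,Y^{\tilde s}$ involves only the released spin and the cavity Gaussian $\eta$. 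Consequently, conditionally on all the disorder, the single-replica Gibbs measure is the product of the Gibbs measure on $\mvrho$ driven by $H_{\tilde s_{\star}^{-}}$ and an independent $\pm1$ spin $\eps$ with conditional mean $\la\eps\ra = \tanh Y^{\tilde s}$.

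Next I would lift this to $n$ replicas. Under $\la\cdot\ra_{0,\tilde s}$ the replicas $(\mvgs^1,\ldots,\mvgs^n)$ are i.i.d.\ from this product measure, so conditionally on the disorder the law factorizes into the $n$-fold Gibbs law of $(\mvrho^1,\ldots,\mvrho^n)$ and $n$ conditionally i.i.d.\ copies $\eps_1,\ldots,\eps_n$, \emph{all} sharing the same cavity field $Y^{\tilde s}$ (the same $\eta$ is added to every replica at $t=0$), hence each with conditional mean $\tanh Y^{\tilde s}$. Since $f^{-}$ is a function on $\gS_{N-1}^n$ and does not involve any released spin, and the $\eps_l$ are conditionally independent of the $\mvrho^l$'s, I obtain the pointwise-in-disorder identity
\[
\Bigl\la f^{-}\prod_{i\in I}\eps_i\Bigr\ra_{0,\tilde s} = \la f^{-}\ra_{0,\tilde s}\cdot\prod_{i\in I}\la\eps_i\ra_{0,\tilde s} = \la f^{-}\ra_{0,\tilde s}\,(\tanh Y^{\tilde s})^{|I|}.
\]
Finally I would take $\E$ over the disorder: $\la f^{-}\ra_{0,\tilde s}$ depends only on $(g_{ij})_{i,j\neq\tilde s_{\star}}$ and hence is independent of $\eta$, whereas $(\tanh Y^{\tilde s})^{|I|}$ is a function of $\eta$ alone, so the expectation splits and yields $\gn_{0,\tilde s}(f^{-})\cdot\E(\tanh Y^{\tilde s})^{|I|}$, which is the claim.

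There is no serious obstacle here; the whole argument is essentially bookkeeping of which source of randomness enters where. The only points worth checking carefully are (a) that $H_{\tilde s_{\star}^{-}}$, and therefore $\la f^{-}\ra_{0,\tilde s}$, carries genuinely no dependence on the cavity variable $\eta$, so that the last expectation truly factorizes, and (b) that the same $\eta$ (hence the same $Y^{\tilde s}$) is shared by all $n$ replicas at $t=0$, which is precisely what makes each conditional factor equal to $\tanh Y^{\tilde s}$ rather than an average of distinct quantities.
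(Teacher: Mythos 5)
Your argument is correct, and it is exactly the route the paper intends: the paper simply states that the proof is the same as the classical SK case and points to Talagrand's Section 1.4, and your proof is precisely that classical argument transcribed to the MSK cavity Hamiltonian — factorization of the Gibbs measure at $t=0$ into the $(N-1)$-spin part and the single released spin (with conditional mean $\tanh Y^{\tilde s}$), i.i.d.\ replicas over the same disorder realization of $\eta$, and then independence of $(g_{ij})_{i,j\neq\tilde s_\star}$ from $\eta$ to split the final expectation.
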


The proof of this lemma is same to the classical case, see Section 1.4 in~\cite{Tal11a}. Next we turn to computation of $\gn_{t,\tilde s}'(f):=\frac{d}{dt}\gn_{t,\tilde s}(f)$ in cavity method. Let
\begin{align*}
	u_{\mvgs} = \frac{\gb}{\sqrt{N}} \gs_{\tilde s_{\star}} \sum_{i \neq \tilde s_{\star}} g_{i\tilde s_{\star}} \gs_i,\qquad v_{\mvgs} = \gb \eta \gs_{\tilde s_{\star}}\sqrt{Q^{\tilde s}},\qquad w_{\mvgs} = \exp(H_{\tilde s_{\star}^-}(\mvgr)+ h\gs_{\tilde s_{\star}})
\end{align*}
in~\eqref{cavity-H}. Recall that in Section~\ref{sec2: latala}, the derivative $\gn_t'(f)$ was computed in (\ref{eq:gen-derivative}) for some $f$ on $\gS_N^n$:
\begin{align*}
	\gn_t'(f) = 2\biggl(\sum_{1 \le  l < l^{\prime}  \le  n} \gn_t(U(\mvgs^l,\mvgs^{l^{\prime}})f) - n \sum_{l  \le  n}\gn_t(U(\mvgs^l,\mvgs^{n+1})f) + \frac{n(n+1)}{2} \gn_t(U(\mvgs^{n+1},\mvgs^{n+2})f)\biggr)
\end{align*}
where $U(\mvgs^l, \mvgs^{l^{\prime}}) = \frac{1}{2}(\E u_{\mvgs^l}u_{\mvgs^{l^{\prime}}} -\E v_{\mvgs^l}v_{\mvgs^{l^{\prime}}}) $. In the setting of cavity method,
\begin{align*}
	\E u_{\mvgs^l}u_{\mvgs^{l^{\prime}}} & = \gb^2 \eps_l \eps_{l^{\prime}}\biggl(\sum_{s \neq \tilde s} \gD_{s,\tilde s}^2\gl_s R_{ll^{\prime}}^{(s)} + \gD_{\tilde s,\tilde s}^2 \gl_{\tilde s}R_{ll^{\prime}}^{(\tilde s-)}\biggr) \\
	\E v_{\mvgs^l}v_{\mvgs^{l^{\prime}}} & = \gb^2 \eps_l \eps_{l^{\prime}} Q^{\tilde s}
\end{align*}
then
\begin{align*}
	U(\mvgs^l, \mvgs^{l^{\prime}}) & = \frac{1}{2}\gb^2 \eps_l \eps_{l^{\prime}}\biggl(\sum_{s \neq \tilde s} \gD_{s, \tilde s}^2\gl_s R_{ll^{\prime}}^{(s)} + \gD_{\tilde s,\tilde s}^2\gl_{\tilde s}R_{ll^{\prime}}^{(\tilde s-)} - Q^{\tilde s}\biggr) = \frac{1}{2}\gb^2 \eps_l \eps_{l^{\prime}}\cdot \gD_{\tilde s}^2 \gL \ovR_{\ell\ell'}^{(\tilde s-)}.
\end{align*}
We make the inner product of vectors and the dependence of $U(\mvgs^l, \mvgs^{l^{\prime}})$ on $\tilde s$ implicit in the above expression. We use $\gD_{s}^2:= \gD^2_{s,\cdot}$ to denote the $s$-th row vector of $\gD^2$, and we keep using this notation for other symmetric matrices $A$ in the rest of this section. Thus in cavity solution $\gn'_{t,\tilde s}(f)$ can be written in the following way:
\begin{thm}\label{cav_deriv}
	For $f$ on $\gS_N^n$, we have
	\begin{align}\label{der formula}
		\begin{split}
			\gn'_{t,\tilde s}(f) & = \gb^2\biggl( \sum_{1 \le  l < l^{\prime}  \le  n} \gn_{t,\tilde s}(\eps_l \eps_{l^{\prime}}f\cdot \gD_{\tilde s}^2 \gL \ovR_{\ell\ell'}^{(\tilde s-)}) - n \sum_{l  \le  n}\gn_{t,\tilde s}(\eps_l \eps_{n+1}f\cdot \gD_{\tilde s}^2 \gL \ovR_{l,n+1}^{(\tilde s-)})\\
			&\qquad\qquad\qquad + \frac{n(n+1)}{2} \gn_{t,\tilde s}(\eps_{n+1} \eps_{n+2}f\cdot \gD_{\tilde s}^2 \gL \ovR_{n+1,n+2}^{(\tilde s-)}) \biggr).
		\end{split}
	\end{align}
\end{thm}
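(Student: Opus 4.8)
The plan is to obtain \eqref{der formula} directly from the abstract interpolation identity \eqref{eq:gen-derivative}, specialized to the cavity interpolation \eqref{cavity-H}. First I would note that, with $u_{\mvgs},v_{\mvgs},w_{\mvgs}$ chosen as above, one has $H_t^{\tilde s}(\mvgs)=\sqrt t\,u_{\mvgs}+\sqrt{1-t}\,v_{\mvgs}+\log w_{\mvgs}$, and that the Gaussian family $(g_{i\tilde s_{\star}})_{i\neq \tilde s_{\star}}$ entering $u_{\mvgs}$, together with the single Gaussian $\eta$ entering $v_{\mvgs}$, are independent of everything appearing in $w_{\mvgs}$ — namely the cavity Hamiltonian $H_{\tilde s_{\star}^-}(\mvrho)$, which involves only the $g_{ij}$ with $i,j\neq \tilde s_{\star}$, and the external field term. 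Consequently Gaussian integration by parts in the variables $(g_{i\tilde s_{\star}},\eta)$ produces exactly the combinatorial expression \eqref{eq:gen-derivative}, now with $U(\mvgs^l,\mvgs^{l'})=\tfrac12(\E u_{\mvgs^l}u_{\mvgs^{l'}}-\E v_{\mvgs^l}v_{\mvgs^{l'}})$, the expectation being over $(g_{i\tilde s_{\star}},\eta)$ conditionally on the remaining randomness; the derivation is word for word the one used in Section~\ref{sec2: latala}.

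The second step is the covariance computation, essentially carried out in the lines preceding the statement. Taking the Gaussian expectation of $u_{\mvgs^l}u_{\mvgs^{l'}}=\tfrac{\gb^2}{N}\eps_l\eps_{l'}\sum_{i,j\neq \tilde s_{\star}}g_{i\tilde s_{\star}}g_{j\tilde s_{\star}}\gs_i^l\gs_j^{l'}$ kills the off-diagonal terms and leaves $\tfrac{\gb^2}{N}\eps_l\eps_{l'}\sum_{i\neq \tilde s_{\star}}\E[g_{i\tilde s_{\star}}^2]\gs_i^l\gs_i^{l'}$; splitting the sum over $i$ by the species of $i$ and using $\frac1N\sum_{i\in I_s}\gs_i^l\gs_i^{l'}=\gl_s R_{ll'}^{(s)}$ for $s\neq \tilde s$ together with $\frac1N\sum_{i\in I_{\tilde s},\,i\neq \tilde s_{\star}}\gs_i^l\gs_i^{l'}=\gl_{\tilde s}\bigl(R_{ll'}^{(\tilde s)}-\eps_l\eps_{l'}/|I_{\tilde s}|\bigr)=\gl_{\tilde s}R_{ll'}^{(\tilde s-)}$ gives the stated value of $\E u_{\mvgs^l}u_{\mvgs^{l'}}$. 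Since $\E v_{\mvgs^l}v_{\mvgs^{l'}}=\gb^2\eps_l\eps_{l'}Q^{\tilde s}$ and $Q^{\tilde s}=(\gD^2\gL\vq)_{\tilde s}$, the symmetry of $\gD^2$ lets one rewrite the difference as $U(\mvgs^l,\mvgs^{l'})=\tfrac12\gb^2\eps_l\eps_{l'}\,\gD_{\tilde s}^2\gL\,\ovR_{\ell\ell'}^{(\tilde s-)}$, exactly the expression displayed just before the theorem.

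Finally I would substitute this $U$ into \eqref{eq:gen-derivative}: the overall factor $2$ there cancels the $\tfrac12$ in $U$, reproducing the three families of terms in \eqref{der formula} (pairs $l<l'\le n$; cross terms with replica $n+1$; and the $(n+1,n+2)$ term with weight $n(n+1)/2$). The only point needing a word of care is the cancellation of the self-overlap contributions $U(\mvgs^l,\mvgs^l)$ present in Talagrand's unsimplified identity: because $\eps_l^2=1$ and $R_{ll}^{(s)}=1$ for every $s$ (so $R_{ll}^{(\tilde s-)}=1-1/|I_{\tilde s}|$), each $U(\mvgs^l,\mvgs^l)$ is a deterministic constant that does not depend on the replica index, hence $\gn_{t,\tilde s}(U(\mvgs^l,\mvgs^l)f)$ is the same for all $l$ including $n+1$ and these terms telescope away exactly as in the passage to \eqref{eq:gen-derivative}. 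There is no real obstacle here; the content is pure bookkeeping, and the two places to stay attentive are (i) that the freshly added disorder is independent of the cavity field, so the abstract lemma applies verbatim, and (ii) that the $\tilde s$-coordinate of every overlap must carry the cavity correction $-\eps_l\eps_{l'}/|I_{\tilde s}|$, which is precisely what replaces $\ovR_{\ell\ell'}$ by $\ovR_{\ell\ell'}^{(\tilde s-)}$.
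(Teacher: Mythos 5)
Your proof is correct and follows essentially the same route as the paper: identify $u_{\mvgs},v_{\mvgs},w_{\mvgs}$ in the cavity interpolation~\eqref{cavity-H}, compute $\E u_{\mvgs^l}u_{\mvgs^{l'}}$ and $\E v_{\mvgs^l}v_{\mvgs^{l'}}$ to get $U(\mvgs^l,\mvgs^{l'})=\tfrac12\gb^2\eps_l\eps_{l'}\,\gD_{\tilde s}^2\gL\,\ovR_{\ell\ell'}^{(\tilde s-)}$, and substitute into the abstract derivative identity~\eqref{eq:gen-derivative}. Your remark that the self-overlap terms $U(\mvgs^l,\mvgs^l)$ cancel because each is a replica-independent deterministic constant is a correct and slightly more explicit justification of the telescoping that the paper invokes in passing.
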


The next proposition is about some H\"{o}lder type inequalities.
\begin{prop}\label{Holder prop}
	For $f$ on $\gS_N^n$, and $\gt_1>0, \gt_2>0$, $1/\gt_1+1/\gt_2 = 1$, we have
	\begin{align}
		\abs{\gn(f)- \gn_{0,\tilde s}(f)}                       & \le  K(n,\gb,\tilde s) \gn(\abs{f}^{\gt_1})^{1/\gt_1} \gn\left(\abs{\gD_{\tilde s}^2 \gL (\vR^{(\tilde s-)}_{12}-\vq)}^{\gt_2}\right)^{1/\gt_2} \label{h1} \\
		\abs{\gn(f)- \gn'_{0,\tilde s}(f)- \gn_{0,\tilde s}(f)} & \le  K(n,\gb,\tilde s) \gn(\abs{f}^{\gt_1})^{1/\gt_1} \gn\left(\abs{\gD_{\tilde s}^2 \gL (\vR^{(\tilde s-)}_{12}-\vq)}^{2\gt_2}\right)^{1/\gt_2} .
		\label{h2}\end{align}
\end{prop}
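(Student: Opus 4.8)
The plan is to run Taylor's theorem along the cavity interpolation $t\mapsto\gn_{t,\tilde s}$, from the decoupled system at $t=0$ to the original system at $t=1$, feeding in the derivative identity of Theorem~\ref{cav_deriv} once for~\eqref{h1} and twice for~\eqref{h2}, and controlling every intermediate quantity by a Gr\"onwall-type comparison that lets us replace $\gn_{t,\tilde s}$ by $\gn=\gn_{1,\tilde s}$ at the cost of a constant depending only on $n,\gb,\tilde s$ (and not on $N$).

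\textbf{Step 1: an a priori comparison.} First I would record that the overlap factors occurring in Theorem~\ref{cav_deriv} are deterministically bounded: every coordinate of $\vR_{\ell\ell'}$, hence of $\vR_{\ell\ell'}^{(\tilde s-)}$, lies in a bounded interval uniformly in $N$, and $\vq\in[0,1]^m$ by Theorem~\ref{thm0}, so $|\gD_{\tilde s}^2\gL\ovR_{\ell\ell'}^{(\tilde s-)}|\le C_1(\tilde s)$ with $C_1(\tilde s)$ depending only on $\gD^2,\gL,\tilde s$. Combining this with $|\eps_l\eps_{l'}|\le1$ and the fact that the auxiliary replicas $n+1,n+2$ appearing in~\eqref{der formula} do not enter a function $g$ on $\gS_N^k$, one gets $|\gn_{t,\tilde s}'(g)|\le C_2(k,\gb,\tilde s)\,\gn_{t,\tilde s}(g)$ for every nonnegative $g$ on $\gS_N^k$ and all $t\in[0,1]$. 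Hence $s\mapsto\log\gn_{s,\tilde s}(g)$ has derivative bounded by $C_2$ in absolute value, and integrating over $[t,1]$ gives
\[
\gn_{t,\tilde s}(g)\le e^{C_2(k,\gb,\tilde s)}\,\gn(g),\qquad t\in[0,1],
\]
for every nonnegative $g$ on $\gS_N^k$.

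\textbf{Step 2: proof of~\eqref{h1}.} By the fundamental theorem of calculus $\gn(f)-\gn_{0,\tilde s}(f)=\int_0^1\gn_{t,\tilde s}'(f)\,dt$. Plugging in Theorem~\ref{cav_deriv}, bounding $|\eps_l\eps_{l'}|\le1$, and applying the H\"older inequality with exponents $\gt_1,\gt_2$ with respect to the probability measure defining $\gn_{t,\tilde s}$, each of the (at most of order $n^2$) summands is bounded by
\[
\gn_{t,\tilde s}(|f|^{\gt_1})^{1/\gt_1}\,\gn_{t,\tilde s}\bigl(|\gD_{\tilde s}^2\gL\ovR_{\ell\ell'}^{(\tilde s-)}|^{\gt_2}\bigr)^{1/\gt_2}.
\]
Since $\gn_{t,\tilde s}$ is invariant under permutations of the replica labels, the second factor equals $\gn_{t,\tilde s}(|\gD_{\tilde s}^2\gL\ovR_{12}^{(\tilde s-)}|^{\gt_2})^{1/\gt_2}$ for every admissible pair $\ell\ne\ell'$, while the first involves only the first $n$ replicas. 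Applying the Step~1 comparison to both $|f|^{\gt_1}$ and $|\gD_{\tilde s}^2\gL\ovR_{12}^{(\tilde s-)}|^{\gt_2}$, then integrating over $t\in[0,1]$ and absorbing the number of terms, the prefactor $\gb^2$, and the Gr\"onwall constants into $K(n,\gb,\tilde s)$, yields~\eqref{h1}.

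\textbf{Step 3: proof of~\eqref{h2}.} Here I would use the second-order Taylor expansion with integral remainder,
\[
\gn(f)-\gn_{0,\tilde s}(f)-\gn_{0,\tilde s}'(f)=\int_0^1(1-t)\,\gn_{t,\tilde s}''(f)\,dt,
\]
and compute $\gn_{t,\tilde s}''(f)$ by applying Theorem~\ref{cav_deriv} a second time to each function $\eps_l\eps_{l'}f\cdot\gD_{\tilde s}^2\gL\ovR_{\ell\ell'}^{(\tilde s-)}$ (these do not depend on $t$, so their $\gn_{t,\tilde s}$-derivative is again given by~\eqref{der formula} with $n$ suitably enlarged). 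This produces a finite combination, with coefficients depending only on $n$ and $\gb^2$, of terms carrying two overlap factors, of the form $\gn_{t,\tilde s}\bigl(\eps_a\eps_b\eps_l\eps_{l'}f\cdot(\gD_{\tilde s}^2\gL\ovR_{\ell\ell'}^{(\tilde s-)})(\gD_{\tilde s}^2\gL\ovR_{ab}^{(\tilde s-)})\bigr)$. Bounding $|\eps\eps|\le1$, using $|xy|\le\tfrac12(x^2+y^2)$ on the two overlap factors, invoking replica symmetry to replace each $|\gD_{\tilde s}^2\gL\ovR_{\ell\ell'}^{(\tilde s-)}|^2$ by $|\gD_{\tilde s}^2\gL\ovR_{12}^{(\tilde s-)}|^2$, then H\"older with exponents $\gt_1,\gt_2$ and the Step~1 comparison exactly as in Step~2, one obtains $\gn(|f|^{\gt_1})^{1/\gt_1}\,\gn(|\gD_{\tilde s}^2\gL(\vR_{12}^{(\tilde s-)}-\vq)|^{2\gt_2})^{1/\gt_2}$ times a constant $K(n,\gb,\tilde s)$, which is~\eqref{h2}.

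\textbf{Main obstacle.} The individual estimates are routine once the framework is in place; the only genuine care is the bookkeeping of replica indices when iterating Theorem~\ref{cav_deriv} (the number of auxiliary replicas grows, and one must verify that every overlap factor can be symmetrized back to the $(1,2)$ pair and that $f$ never depends on the new replicas), together with making the constants uniform in $N$. The latter is exactly what the deterministic bound $|\gD_{\tilde s}^2\gL\ovR^{(\tilde s-)}|\le C_1(\tilde s)$ and the Gr\"onwall step of Step~1 are designed to guarantee.
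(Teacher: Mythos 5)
Your proposal is correct and follows essentially the same approach as the paper, which in fact gives only a brief remark-sketch (referencing Talagrand's treatment of the single-species case): establish the Gr\"onwall comparison $\gn_{t,\tilde s}(g)\le e^{C}\gn(g)$ for nonnegative $g$ by bounding $|\gn_{t,\tilde s}'(g)|\le C\,\gn_{t,\tilde s}(g)$ via the deterministic bound on $\gD_{\tilde s}^{2}\gL\ovR^{(\tilde s-)}$, then expand $\gn(f)-\gn_{0,\tilde s}(f)$ (resp.\ with the first-derivative term subtracted) by the fundamental theorem of calculus (resp.\ second-order Taylor), apply H\"older with exponents $\gt_1,\gt_2$, and symmetrize the overlap factor back to the $(1,2)$-pair. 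Your Steps 1--3 are a faithful filling-in of exactly that outline, with the only mild cosmetic difference being your use of $|xy|\le\tfrac12(x^2+y^2)$ on the two overlap factors in Step 3, where one could equivalently apply a three-exponent H\"older inequality with exponents $\gt_1,2\gt_2,2\gt_2$; both give the exponent $2\gt_2$ in~\eqref{h2}.
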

\begin{rem}
	The proof of this theorem is similar to the original proof in Talagrand's book. For the inequality~\eqref{h1}, we just need to control $\gn'_{t,\tilde s}(f)$, and by Theorem~\ref{cav_deriv}, it suffices to control the terms $\gn_{t,\tilde s}(\eps_l \eps_{l^{\prime}}f(\gD_{\tilde s}^2 \gL \ovR_{\ell\ell'}^{(\tilde s-)})) $. We claim that $\abs{\gn_{t,\tilde s}(f)} \le \exp(4C_{\tilde s}n^2\gb^2)\gn(f)$, where $C_{\tilde s} :=\max\{\gD_{\tilde s}^2\gL\}$ is the maximal entry of the vector $\gD_{\tilde s}^2\gL$. Because in Theorem~\ref{cav_deriv}, $\abs{R_{12}^s-q_s}\le 2$ implies
	\[ \abs{\gn'_{t,\tilde s}(f)} \le 4 C_{\tilde s}n^2\gb^2\gn_{t,\tilde s}(f) \]
	then integrating the above inequality will prove the claim. Finally applying H\"older inequality, it proves~\eqref{h1}. For~\eqref{h2}, we just control the second order derivative $\gn''_{t,\tilde s}$ in a similar way.
\end{rem}

Recall the notation $\ov{\vR}_{12} = \vR_{12} -\vq$, using the Proposition~\ref{Holder prop}, one can estimate the quantities $\gn(\ov \vR_{12}^{\T} A\ov \vR_{12})$, $\gn(\ov \vR_{13}^{\T} A\ov \vR_{12})$ and $\gn(\ov \vR_{34}^{\T} A\ov \vR_{12})$, where $A=(\!(a_{s,t})\!)_{s,t=1}^m$ is a symmetric $m \times m$ matrix.

We start with $\gn(\ov \vR_{12}^{\T}A \ov \vR_{12})$, then use the cavity method to do some second moment computation to derive a linear system.
We incorporate the randomness of $\gd$ in the following to get
\begin{align}\label{eq:s3a}
	\begin{split}
		\gn\big(\ov{\vR}^{\T}_{12} \gL A \gL \ov{\vR}_{12}\big) &= \gn\big(\sum_{s=1}\gl_s \oR_{12}^{(s)}\sum_{t=1}\gl_ta_{s,t}\oR_{12}^{(t)}\big) \\
		&= \gn\big(\E^{\gd}\big( \oR_{12}^{(\gd)}\sum_{t=1}\gl_t a_{\gd,t} \oR_{12}^{(t)}\big)\big) = \hat{\gn}\big(\oR_{12}^{(\gd)}\sum_{t=1}\gl_t a_{\gd,t} \oR_{12}^{(t)}\big)
	\end{split}
\end{align}
where $\hat{\gn} := \gn \otimes \E^{\gd}$, and
\[ \oR_{12}^{(\gd)} = \frac{1}{|I_{\gd}|} \sum_{i \in I_\gd}(\gs_i^1\gs_i^2-q_{\gd}) = \frac{1}{|I_\gd|} \sum_{\substack{i \in I_\gd, \\
			i \neq \gd_{\star}}} \gs_i^1 \gs_i^2 + \frac{1}{|I_\gd|}\eps_1\eps_2 - q_{\gd}. \]
By using the symmetry among sites inside the species $\gd$, we continue the above expression~\eqref{eq:s3a}:
\begin{align}\label{eq:s3b}
	\begin{split}
		\hat{\gn}\biggl((\eps_1\eps_2-q_{\gd})\sum_{t=1}\gl_t a_{\gd,t} \oR_{12}^{(t)}\biggr) &= \hat{\gn}\biggl((\eps_1\eps_2-q_{\gd})\bigl(\sum_{t \neq \gd} \gl_t a_{\gd,t} \oR_{12}^{(t)}+\gl_{\gd}a_{\gd,\gd} \oR_{12}^{(\gd)}\bigr)\bigg)\\
		& = \hat{\gn}((\eps_1\eps_2-q_{\gd})A_{\gd}\gL\ov{\vR}_{12}^{(\gd-)}) + \frac1N\hat{\gn}((\eps_1\eps_2-q_{\gd}) a_{\gd,\gd}\eps_1\eps_2).
	\end{split}
\end{align}
Note that, in the above expression~\eqref{eq:s3b}, there is randomness inside $\eps_1, \eps_2, \gd$, which describes the way of choosing the species to decouple the spin, and the computation proceeds in a quenched way, because all the randomness only comes from $\gd$, which takes finitely many values. In the following subsections, we deal with the two terms in~\eqref{eq:s3b} separately.

\subsection{Estimation of $\hat{\gn}((\eps_1\eps_2-q_{\gd})A_{\gd}\gL\ov{\vR}_{12}^{(\gd-)})$ }

Recall the definition of $\hat{\gn}$ below~\eqref{eq:s3a} and notice that
\begin{align}\label{eq:s3bb}
	\hat{\gn}((\eps_1\eps_2-q_{\gd})A_{\gd}\gL\ov{\vR}_{12}^{(\gd-)}) = \sum_{s=1}^m \gl_{s}\cdot \gn((\eps_1\eps_2-q_{s})A_{s}\gL\ov{\vR}_{12}^{(s-)}).
\end{align}
Therefore it suffices to compute $ \gn((\eps_1\eps_2-q_{s})A_{s}\gL\ov{\vR}_{12}^{(s-)})$. First let's take
\[\tilde{f} =(\eps_1\eps_2-q_{s})A_{s}\gL\ov{\vR}^{(s-)}_{12},\]
by the second H\"older inequality~\eqref{h2} with $\gt_1 = 3, \gt_2 = 3/2$ in the Proposition~\ref{Holder prop}, we have
\[ \gn(\tilde{f}) =\gn_{0,s}'(\tilde{f})+ \mathfrak{R}. \]
Because $\gn_{0,s}(\tilde{f}) = 0 $ by Lemma~\ref{prod property}, where $\mathfrak{R}$ represents the higher order terms with
\begin{align}\label{error}
	\fR\le K\left(N^{-3/2} + \sum_{s=1}^{m}\nu(|R_{12}^{(s-)}-q_s|^{3})\right)\max_{s,t}|A_{s,t}|.
\end{align}
For the derivative term, by Theorem~\ref{cav_deriv}, $\gn_{0,s}'(\tilde{f})$ is a sum of terms in the form of
\[ \gb^2\gn_{0,s}(\eps_l\eps_{l^{\prime}}(\eps_1\eps_2-q_{s})\cdot A_{s}\gL\ovR_{12}^-\cdot \gD_{s}^2 \gL \ovR_{\ell\ell'}^{(s-)}). \]
In particular, we introduce a general formula as a corollary to Theorem~\ref{cav_deriv} to compute $\gn_{0,s}'(\tilde{f})$.
\begin{cor}\label{cor:4}
	Consider a function $f^-$ on $\gS_{N-1}^n$ and two integers $x \neq y  \le  n$. Then
	\begin{align}\label{cavity:1}
		\begin{split}
			\gn_{0,s}'\left((\eps_x\eps_y-q_{s})f^-\right) & = \sum_{1 \le  l < l^{\prime}  \le  n} b_{s}(l,l^{\prime};x,y) \gn_{0,s}(f^-\cdot \gD_{s}^2 \gL \ovR_{\ell\ell'}^{(s-)}) \\
			&\qquad\qquad - n \sum_{l  \le  n}b_{s}(l,n+1;x,y)\gn_{0,s}(f^-\cdot \gD_{s}^2 \gL \ovR_{l,n+1}^{(s-)})\\
			&\qquad\qquad\qquad\qquad + \frac{n(n+1)}{2} b_{s}(0)\gn_{0,s}(f^-\cdot \gD_{s}^2 \gL \ovR_{n+1,n+2}^{(s-)})
		\end{split}
	\end{align}
	where $b_{s}(l,l^{\prime};x,y) = b_{s}(\abs{\{l,l^{\prime}\} \cap \{x,y\}})$ and
	\begin{align}\label{def of b}
		\begin{split}
			b_{s}(2) & = \gb^2 \gn_{0,s}(\eps_1\eps_2(\eps_1\eps_2-q_{s})) =\gb^2 \gn_{0,s}(1-\eps_1\eps_2q_s) = \gb^2(1-q_s^2) \\
			b_{s}(1) & = \gb^2 \gn_{0,s}(\eps_1\eps_3(\eps_1\eps_2-q_{s})) =\gb^2 \gn_{0,s}(\eps_2\eps_3-\eps_1\eps_3q_{s}) =\gb^2 q_s(1-q_s) \\
			b_{s}(0) & = \gb^2 \gn_{0,s}(\eps_3\eps_4(\eps_1\eps_2-q_{s})) =\gb^2 \gn_{0,s}(\eps_1\eps_2\eps_3\eps_4-\eps_3\eps_4q_s) =\gb^2 (\hat{q}_{s}-q_s^2)
		\end{split}
	\end{align}
\end{cor}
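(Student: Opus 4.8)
The plan is to read the identity off Theorem~\ref{cav_deriv} and then collapse the decoupled spins using Lemma~\ref{prod property}. First I would apply Theorem~\ref{cav_deriv} with $\tilde s=s$ to the function $f=(\eps_x\eps_y-q_s)\,f^-$ on $\gS_N^n$ and set $t=0$; since $x\neq y\le n$, this writes $\gn'_{0,s}\bigl((\eps_x\eps_y-q_s)f^-\bigr)$ as the sum over pairs of replica indices — with the two adjoined replicas $n+1,n+2$ carrying decoupled spins $\eps_{n+1},\eps_{n+2}$ and with signs and multiplicities $+1$, $-n$, $\tfrac{n(n+1)}{2}$ exactly as in~\eqref{der formula} — of the terms
\[ \gb^2\,\gn_{0,s}\Bigl(\eps_l\eps_{l^{\prime}}(\eps_x\eps_y-q_s)\,f^-\cdot \gD_s^2\gL\ovR_{\ell\ell'}^{(s-)}\Bigr). \]

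Next I would evaluate each such term. The companion factor $f^-\cdot\gD_s^2\gL\ovR_{\ell\ell'}^{(s-)}$ does not involve the decoupled spin $\eps$: indeed $\vR_{\ell\ell'}^{(s-)}$ is $\vR_{\ell\ell'}$ with its $s$-th coordinate replaced by $R_{\ell\ell'}^{(s)}-|I_s|^{-1}\eps_\ell\eps_{\ell'}$, so it is a function of the $\gS_{N-1}$-configurations, to which Lemma~\ref{prod property} applies. Writing $Y^s:=\gb\eta\sqrt{Q^s}+h$ and $j:=|\{l,l^{\prime}\}\cap\{x,y\}|$, I would reduce $\eps_l\eps_{l^{\prime}}\eps_x\eps_y$ modulo $\eps_i^2=1$ to a product of $4-2j$ distinct $\eps_i$'s and $\eps_l\eps_{l^{\prime}}$ to a product of two distinct ones; then Lemma~\ref{prod property} gives
\[ \gb^2\,\gn_{0,s}\Bigl(\eps_l\eps_{l^{\prime}}(\eps_x\eps_y-q_s)\,f^-\cdot\gD_s^2\gL\ovR_{\ell\ell'}^{(s-)}\Bigr)=\gb^2\bigl(\E(\tanh Y^s)^{4-2j}-q_s\,\E(\tanh Y^s)^2\bigr)\,\gn_{0,s}\Bigl(f^-\cdot\gD_s^2\gL\ovR_{\ell\ell'}^{(s-)}\Bigr). \]
With $\E(\tanh Y^s)^2=q_s$, $\E(\tanh Y^s)^4=\hat q_s$ and $\E 1=1$, the bracket equals $1-q_s^2$, $q_s(1-q_s)$, $\hat q_s-q_s^2$ for $j=2,1,0$ respectively, i.e.\ it is $b_s(j)/\gb^2$. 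For the mixed terms, since $n+1,n+2\notin\{x,y\}$ we have $|\{l,n+1\}\cap\{x,y\}|=\ind\{l\in\{x,y\}\}\in\{0,1\}$ and $|\{n+1,n+2\}\cap\{x,y\}|=0$, producing the coefficients $b_s(l,n+1;x,y)$ and $b_s(0)$; assembling everything gives~\eqref{cavity:1}. The three displayed identities for $b_s(0),b_s(1),b_s(2)$ are the special case $f^-\cdot\gD_s^2\gL\ovR_{\ell\ell'}^{(s-)}\equiv 1$ of the same computation (equivalently, a direct evaluation via Lemma~\ref{prod property}).

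This is essentially all bookkeeping, so I do not anticipate a real obstacle; the points that need care are (i) verifying that $\ovR_{\ell\ell'}^{(s-)}$, and hence the whole companion factor, is genuinely a function of the $\gS_{N-1}$-configurations, so that Lemma~\ref{prod property} can be applied term by term, and (ii) noting that the coefficient multiplying each $\gn_{0,s}\bigl(f^-\cdot\gD_s^2\gL\ovR_{\ell\ell'}^{(s-)}\bigr)$ depends on $(l,l^{\prime};x,y)$ only through $|\{l,l^{\prime}\}\cap\{x,y\}|$ — immediate from exchangeability of the replicas under $\gn_{0,s}$, under which, conditionally on $\eta$, the decoupled spins $\eps_1,\eps_2,\dots$ are i.i.d.\ $\pm1$ with mean $\tanh Y^s$. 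In effect the computation reduces to the even moments $\E(\tanh Y^s)^{2k}$, $k=0,1,2$.
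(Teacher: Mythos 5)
Your proposal is correct and follows the same route the paper has in mind: specialize Theorem~\ref{cav_deriv} (with $\tilde s=s$) to $f=(\eps_x\eps_y-q_s)f^-$ at $t=0$, then factor each resulting term through Lemma~\ref{prod property} because the companion factor $f^-\cdot\gD_s^2\gL\ovR_{\ell\ell'}^{(s-)}$ lives on $\gS_{N-1}^{\cdot}$, and finally reduce $\gn_{0,s}(\eps_l\eps_{l'}(\eps_x\eps_y-q_s))$ by parity to $b_s(|\{l,l'\}\cap\{x,y\}|)$ via the moments $\E(\tanh Y^s)^{2k}$, $k=0,1,2$. The parity bookkeeping ($4-2j$ surviving $\eps$'s), the observation that $n+1,n+2\notin\{x,y\}$ so the correction terms pick up $b_s(\ind\{l\in\{x,y\}\})$ and $b_s(0)$, and the moment evaluations $1-q_s^2$, $q_s(1-q_s)$, $\hat q_s-q_s^2$ are all exactly as intended.
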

In the above Corollary, $$q_{s}=\E\tanh^2(\gb \eta\sqrt{(\gD^{2}\gL\vq)_s}+h), \qquad \hat{q}_{s} :=\E\tanh^{4}(\gb \eta \sqrt{(\gD^{2}\gL\vq)_s}+h), $$ and we used Lemma~\ref{prod property} to get
\begin{align*}
	 & \gn_{0,s}(\eps_l\eps_{l^{\prime}}(\eps_1\eps_2-q_{s})\cdot A_{s}\gL\ovR_{12}^{(s-)}\cdot \gD_{s}^2 \gL \ovR_{\ell\ell'}^{(s-)})                      \\
	 & \qquad = \gn_{0,s}(\eps_l\eps_{l^{\prime}}(\eps_1\eps_2-q_{s}))\cdot \gn_{0,s}(A_{s}\gL\ovR_{12}^{(s-)}\cdot \gD_{s}^2 \gL \ovR_{\ell\ell'}^{(s-)}).
\end{align*}
Using Corollary~\ref{cor:4} with $n=2,x=1,y=2, f^- = A_{s}\gL\ovR_{12}^{(s-)}$, we have
\begin{align}\label{deriva 0}
	\begin{split}
		&\gn'_{0,s}((\eps_x\eps_y-q_{s})f^-)\\ & = \gn_{0,s}'\bigl((\eps_1\eps_2-q_{s})\cdot A_{s}\gL\ovR_{12}^{(s-)}\bigr)\\
		&= b_{s}(2)\cdot \gn_{0,s}(A_{s}\gL\ovR_{12}^{(s-)}\cdot \gD_{s}^2 \gL \ovR_{12}^{(s-)}) - 2 b_{s}(1)\cdot \gn_{0,s}(A_{s}\gL\ovR_{12}^{(s-)}\cdot \gD_{s}^2 \gL \ovR_{13}^{(s-)})\\
		&\qquad - 2 b_{s}(1)\cdot \gn_{0,s}(A_{s}\gL\ovR_{12}^{(s-)}\cdot \gD_{s}^2 \gL \ovR_{23}^{(s-)})
		+ 3 b_{s}(0)\cdot \gn_{0,s}(A_{s}\gL\ovR_{12}^{(s-)}\cdot \gD_{s}^2 \gL \ovR_{34}^{(s-)}).
	\end{split}
\end{align}
For terms like $\gn_{0,s}(\gL A_{s}\ovR_{12}^{(s-)}\cdot \gD_{s}^2\gL \ovR_{\ell\ell'}^{(s-)})$ in the above expression, we apply H\"older inequality~\eqref{h1} with $\gt_1 = \frac 3 2, \gt_2= 3$, to obtain
\[ \gn_{0,s}(\gL A_{s}\ovR_{12}^{(s-)}\cdot \gD_{s}^2\gL \ovR_{\ell\ell'}^{(s-)}) = \gn(\gL A_{s}\ovR_{12}^{(s-)}\cdot \gD_{s}^2\gL \ovR_{\ell\ell'}^{(s-)}) + \fR \]
and changing from $ \gn(\gL A_{s}\ovR_{12}^{(s-)}\cdot \gD_{s}^2\gL \ovR_{\ell\ell'}^{(s-)})$ to $ \gn(\gL A_{s}\ovR_{12}\cdot \gD_{s}^2\gL \ovR_{\ell\ell'})$ will result in an error term $\fR$, \ie
\[ \gn(\gL A_{s}\ovR_{12}^{(s-)}\cdot \gD_{s}^2\gL \ovR_{\ell\ell'}^{(s-)}) = \gn(\gL A_{s}\ovR_{12}\cdot \gD_{s}^2\gL \ovR_{\ell\ell'})+\fR. \]
This is because $R_{ll^{\prime}}^{(s-)} = R_{ll^{\prime}}^{(s)} - \frac{1}{|I_s|}\eps_l\eps_{l^{\prime}}$, and the fact that $\frac 1 N \gn(\gL A_{s}\ovR_{12}) = \fR$ as $xy \le x^{\frac 3 2}+y^3$ for $x,y \ge 0$. Combining~\eqref{deriva 0} and~\eqref{eq:s3bb}, we have
\begin{align}
	\begin{split}
		\hat{\gn}\big((\eps_1\eps_2-q_{\gd})A_{\gd}\gL\ov{\vR}_{12}^{(\gd-)}\big) &= \gn\big(\ovR_{12}^{\T}S_{A} B(2)\gD^2\gL\ovR_{12}\big) - 4\gn\big(\ovR_{12}^{\T}S_{A} B(1)\gD^2\gL\ovR_{13}\big)\\
		&\qquad + 3\gn\big(\ovR_{12}^{\T}S_{A} B(0)\gD^2\gL\ovR_{34}\big) + \fR
	\end{split}
\end{align}
where $S_{A}:=\gL A\gL$,
\begin{align}\label{def of B}
	B(i) = \diag(b_1(i), b_2(i),\ldots, b_m(i)), \quad \text{for} \quad i=0,1,2,
\end{align}
with $b_{s}(i)$ as given in~\eqref{def of b} and $\fR$ represent the higher order terms as defined in~\eqref{error}.

\subsection{Estimation of $\hat{\gn}((\eps_1\eps_2-q_{\gd})a_{\gd,\gd}\eps_1\eps_2)$}

For the second term in the last step of~\eqref{eq:s3b}, we get
\begin{align}\label{eq:s3c}
	\begin{split}
		\hat{\gn}((\eps_1\eps_2-q_{\gd})a_{\gd,\gd}\eps_1\eps_2) & = \hat{\gn}_{0,\gd}(a_{\gd,\gd}(1-\eps_1\eps_2q_{\gd})) +\fR\\
		& = \E^{\gd}(a_{\gd,\gd}(1-q_{\gd}^2))+\fR = \sum_{s=1}^m \gl_s a_{s,s}(1-q_s^2)+\fR.
	\end{split}
\end{align}
In the above computation, the first equality is due to the H\"older inequality~\eqref{h1} with $\gt_1=\infty, \gt_2 = 1$ in the Proposition~\ref{Holder prop}, the second equality is due to the property of $\gn_{0,s}$ as in Lemma~\ref{prod property}.

By collecting all the terms, we have
\begin{align}\label{U2}
	\begin{split}
		\gn(\ov{\vR}^{\T}_{12} S_{A} \ov{\vR}_{12}) & = \frac1N \sum_{s=1}^m \gl_s a_{s,s}(1-q_s^2)+ \gn\big(\ovR_{12}^{\T}S_{A} B(2)\gD^2\gL)\ovR_{12}\big) \\
		&\qquad - 4\gn\big(\ovR_{12}^{\T}S_{A} B(1)\gD^2\gL\ovR_{13}\big) + 3\gn\big(\ovR_{12}^{\T}S_{A} B(0)\gD^2\gL\ovR_{34}\big) + \fR.
	\end{split}
\end{align}

By a similar argument, we get
\begin{align}\label{U1}
	\begin{split}
		\gn(\ov{\vR}^{\T}_{12} S_{A} \ov{\vR}_{13}) & = \frac1N\sum_{s=1}^m \gl_s a_{s,s}(q_s-q_s^2)+\gn(\ovR_{12}^{\T}S_{A} B(1)\gD^2\gL\ovR_{12}) \\
		&\qquad + \gn(\ovR_{12}^{\T}S_{A} (B(2)-2B(1)-3B(0))\gD^2\gL\ovR_{13})\\
		&\qquad\qquad + \gn(\ovR_{12}^{\T}S_{A} (6B(0)-3B(1))\gD^2\gL\ovR_{34}) + \fR,
	\end{split}
\end{align}
and
\begin{align}\label{U0}
	\begin{split}
		\gn(\ov{\vR}^{\T}_{12} S_{A} \ov{\vR}_{34}) & = \frac1N\sum_{s=1}^m \gl_s a_{s,s}(\hat{q}_s-q_s^2)+\gn(\ovR_{12}^{\T}S_{A} B(0)\gD^2\gL\ovR_{12}) \\
		&\qquad + \gn(\ovR_{12}^{\T}S_{A} (4B(1)-8B(0))\gD^2\gL\ovR_{13})\\
		&\qquad\qquad + \gn(\ovR_{12}^{\T}S_{A} (B(2)-8B(1)+10B(0))\gD^2\gL\ovR_{34}) + \fR.
	\end{split}
\end{align}

We define the $m\times m$ diagonal matrices
\begin{align}\label{Theta}
	\Theta(i) :=\gb^{-2}B(i)\gL^{-1},\qquad \text{ for } i=0,1,2,
\end{align}
so that
\begin{align*}
	\sum_{s=1}^m \gl_s a_{s,s}(1-q_s^2) = \tr(S_{A}\Theta(2)),\quad & \quad \sum_{s=1}^m \gl_s a_{s,s}(q_s-q_s^2) = \tr(S_{A}\Theta(1)), \\
	\text{ and } \sum_{s=1}^m \gl_s a_{s,s}(\hat{q}_s-q_s^2)        & = \tr(S_{A}\Theta(0)).
\end{align*}
We also define the $m\times m$ matrices (not necessarily symmetric)
\begin{align}\label{Bhat}
	\hat{B}(i):=B(i)\gD^2\gL, \qquad \text{ for } i=0,1,2,
\end{align}
Thus, from equations~\eqref{U2},~\eqref{U1} and~\eqref{U0}, for any $m\times m$ symmetric matrix $S$ we have
\begin{align}
	\begin{split}
		\nu( \ovR_{12}^{\T}S \ovR_{12}) & = \nu( \ovR_{12}^{\T}S \hat{B}(2) \ovR_{12}) - 4 \nu(\ovR_{12}^{\T}S \hat{B}(1)\ovR_{13}) \\
		&\qquad + 3\nu(\ovR_{12}^{\T}S \hat{B}(0)\ovR_{34}) + \frac1N\tr(S \Theta(2)) + \fR,\\
		\nu(\ovR_{12}^{\T}S\ovR_{13})& = \nu(\ovR_{12}^{\T}S \hat{B}(1)\ovR_{12}) + \nu(\ovR_{12}^{\T}S (\hat{B}(2)-2\hat{B}(1)-3\hat{B}(0))\ovR_{13})\\
		&\qquad + \nu(\ovR_{12}^{\T}S (6\hat{B}(0)-3\hat{B}(1))\ovR_{34}) +\frac1N\tr(S \Theta(1)) + \fR,\\
		\nu(\ovR_{12}^{\T}S\ovR_{34})&= \nu(\ovR_{12}^{\T}S \hat{B}(0)\ovR_{12}) + \nu(\ovR_{12}^{\T}S (4\hat{B}(1)-8\hat{B}(0))\ovR_{13})\\
		&\qquad + \nu(\ovR_{12}^{\T}S (\hat{B}(2)-8\hat{B}(1)+10\hat{B}(0))\ovR_{34}) +\frac1N\tr(S \Theta(0))+ \fR.
	\end{split}
	\label{eq:1}\end{align}
The system of equations~\eqref{eq:1} can be written as a system of linear equations in the variables $\nu((R_{12}^{(s)}-q_{s})(R^{(t)} _{\ell\ell'}- q_{t})), s\le t, \ell<\ell'$ as follows.

We define the three $m\times m$ matrices
\begin{align*}
	U(0):=\nu(\ovR_{12}\ovR_{34}^{\T}),\ U(1):=\nu(\ovR_{12}\ovR_{13}^{\T}),\ U(2):=\nu(\ovR_{12}\ovR_{12}^{\T}).
\end{align*}
Thus equation~\eqref{eq:1} can be restated in the following way: for any symmetric $m\times m$ matrix $S$
\begin{align}\label{eq:2}
	\tr(SU(i)) = \sum_{j=0}^2\sum_{k=0}^2 C_{ij}(k)\tr(S\hat{B}(k)U(j)) + \frac1N\tr(S \Theta(i)) + \fR,\qquad i=0,1,2,
\end{align}
where
\begin{align*}
	C(0):=
	\begin{pmatrix}
		10 & -8 & 1 \\6 &-3&0\\3 &0&0
	\end{pmatrix}
	, \quad C(1):=
	\begin{pmatrix}
		-8 & 4 & 0 \\-3&-2&1\\0 &-4&0
	\end{pmatrix}
	\text{ and } C(2):=
	\begin{pmatrix}
		1 & 0 & 0 \\0&1&0\\0&0&1
	\end{pmatrix}
\end{align*}
(rows and columns are indexed by $0,1,2$). 

The matrices, $C(0), C(1), C(2)$, commute with each other and thus are simultaneously ``upper-triangularizable''. This fact is crucially used in the variance computation for the overlap in Section~\ref{sec:varoverlap}. Explaining the commutativity property of the $C(\cdot)$ matrices involves understanding the underlying algebraic structure of the Gibbs measure and is an interesting open question. Note that this phenomenon appears in both SK and MSK models. One can guess that this is probably due to some high-level symmetry among replicas in the RS regime. Also, in the MSK model, it is likely to be connected to the synchronization property of overlap discovered by Panchenko (see~\cite{Pan15}).

Note that, the upper bound on $\fR$ depends on $S$ only through $\max_{1\le r,t\le m}|S_{r,t}|$. We denote by $\ve_{i}$, the column vector with $1$ in the $i$-th coordinate and zero elsewhere. Taking $S=(\ve_{l}\ve_{k}^{\T}+\ve_{k}\ve_{l}^{\T})/2$ for all possible choices of $1\le k\le l \le m$, from equation~\eqref{eq:2}, we get
\begin{align}\label{eq:3}
	U(i) = \sum_{j=0}^2\sum_{k=0}^2 C_{ij}(k)\cdot\sym(\hat{B}(k)U(j)) + \frac1N\Theta(i) + \mfR, \quad \text{for} \quad i=0,1,2,
\end{align}
where $\sym(A):=(A+A^{\T})/2$ for a square matrix $A$ and $\mfR$ is a $m\times m$ matrix with
\[ \max_{p,q}|\mfR_{p,q}|\le K\left(N^{-3/2} + \sum_{s=1}^{m}\nu(|R_{12}^{(s-)}-q_s|^{3})\right). \]
\begin{rem}
	The above argument does not require the positive semi-definiteness of $\gD^2$. It seems that at least in a high-temperature region, the RS solution given by Parisi formula~\cite{Pan15} is still valid even for indefinite $\gD^2$.
\end{rem}

\section{Variance of overlap} \label{sec:varoverlap}
In Section~\ref{sec:cavity}, we introduced a species-wise cavity method for the MSK model to derive a linear system involving the quadratic form of overlap. In this section, by studying the linear system, we solve the overlap vectors' variance-covariance structure. Along the way, we obtain the AT-line condition in the MSK model.

\subsection{Proof of Theorem~\ref{thm:lyap}}
We recall the notations
\begin{align*}
	b_{s}(0) & =\gb^2 (\hat{q}_{s}-q_s^2),\quad b_{s}(1) =\gb^2 q_s(1-q_s),\quad b_{s}(2) = \gb^2(1-q_s^2)
\end{align*}
for $s=1,2,\ldots,m$; and
\begin{align*}
	B(i) = \diag(b_1(i), b_2(i),\ldots, b_m(i)),\quad \Theta(i) =\gb^{-2}B(i)\gL^{-1},\quad \hat{B}(i) =B(i)\gD^2\gL
\end{align*}
for $i=0,1,2.$ Then with
\begin{align*}
	C(0):=
	\begin{pmatrix}
		10 & -8 & 1 \\6 &-3&0\\3 &0&0
	\end{pmatrix}
	, \quad C(1):=
	\begin{pmatrix}
		-8 & 4 & 0 \\-3&-2&1\\0 &-4&0
	\end{pmatrix}
	\text{ and } C(2):=
	\begin{pmatrix}
		1 & 0 & 0 \\0&1&0\\0&0&1
	\end{pmatrix}
\end{align*}
(row and column indexed by $0,1,2$), we have from~\eqref{eq:3}
\begin{align*}
	U(i) = \sum_{j=0}^2\sum_{k=0}^2 C_{ij}(k)\cdot\sym(\hat{B}(k)U(j)) + \frac1N\Theta(i) + \mfR,\qquad i=0,1,2,
\end{align*}
where $\sym(A):=(A+A^{\T})/2$.

The matrices, $C(0),C(1),C(2)$, commute with each other and thus are simultaneously ``upper-triangularizable''. In particular, with
\[ V=
	\begin{pmatrix}
		-3 & 2 & 0 \\3 &-4&1\\1&-2&1
	\end{pmatrix}
\]
and defining $T(k):=VC(k)V^{-1}, k=0,1,2$ we have
\begin{align*}
	T(0) =
	\begin{pmatrix}
		3 & -3 & 0 \\
		0 & 3  & 0 \\
		0 & 0  & 1
	\end{pmatrix}
	,
	T(1) =
	\begin{pmatrix}
		-4 & 2  & 0  \\
		0  & -4 & 0  \\
		0  & 0  & -2
	\end{pmatrix}
	,
	T(2)=
	\begin{pmatrix}
		1 & 0 & 0 \\
		0 & 1 & 0 \\
		0 & 0 & 1
	\end{pmatrix}.
\end{align*}
Define $\hat{U}(l):=\sum_{i=0}^{2}V_{li}U(i)$ for $l=0,1,2$, \ie\
\begin{align*}
	\hat{U}(0) & := -3U(0)+2U(1),     \\
	\hat{U}(1) & := 3U(0)-4U(1)+U(2), \\
	\hat{U}(2) & := U(0)-2U(1)+U(2).
\end{align*}
Similarly, we define
\[ \hat{\Theta}(l):=\sum_{i=0}^{2}V_{li}\Theta(i) \text{ for } l=0,1,2.
\]
We have for $l=0,1,2$,
\begin{align*}
	\hat{U}(l)
	 & =\sum_{i=0}^{2}V_{li} \sum_{j=0}^2\sum_{k=0}^2 C_{ij}(k)\cdot\sym(\hat{B}(k)U(j)) + \frac1N \sum_{i=0}^{2}V_{li}\Theta(i) + \mfR \\
	 & =\sum_{j=0}^2 \sum_{k=0}^2(VC(k))_{lj}\cdot\sym(\hat{B}(k)U(j)) + \frac1N \hat{\Theta}(l) + \mfR                                 \\
	 & =\sum_{k=0}^2 \sym\bigl(\hat{B}(k)\cdot \sum_{j=0}^2(T(k)V)_{lj} U(j) \bigr) + \frac1N \hat{\Theta}(l) + \mfR.
\end{align*}
Simplifying, we get
\begin{align}\label{Lyapunov}
	\begin{split}
		\sym\biggl( \bigl(I-\sum_{i=0}^{2}V_{li}\hat{B}(i)\bigr) \cdot \hat{U}(l)\biggr) &= \frac1N \hat\Theta(l) + \mfR,\qquad l=1,2,\\
		\text{ and }
		\sym\biggl( \bigl(I-\sum_{i=0}^{2}V_{1i}\hat{B}(i)\bigr) \cdot \hat{U}(0)\biggr) &= \sym\biggl( \sum_{i=0}^{2}V_{0i}\hat{B}(i) \cdot \hat{U}(1)\biggr) + \frac1N \hat\Theta(0) + \mfR.
	\end{split}
\end{align}

Now consider the $m\times m$ diagonal matrices $\gC, \gC', \gC''$, whose $s$-th diagonal entries are respectively given by
\begin{align*}
	\gc_s   & = \gb^{-2}(b_s(0)-2b_s(1)+b_s(2)) = 1-2q_{s}+\hat{q}_s,    \\
	\gc'_s  & = \gb^{-2}(3 b_s(0)-4b_s(1)+b_s(2))= 1 -4q_{s}+3\hat{q}_s, \\
	\gc''_s & = \gb^{-2}(-3b_s(0)+2b_s(1)) =2q_s +q_s^2 -3\hat{q}_s.
\end{align*}
Then,
\begin{align*}
	\sum_{i=0}^{2}V_{2i}\hat{B}(i) & = \gb^{2}\gC\gD^{2}\gL,\quad
	\sum_{i=0}^{2}V_{1i}\hat{B}(i) = \gb^{2}\gC'\gD^{2}\gL,\quad
	\sum_{i=0}^{2}V_{0i}\hat{B}(i) = \gb^{2}\gC''\gD^{2}\gL,                                                               \\
	\hat\Theta(2)                  & = \gC\gL^{-1},\quad \hat\Theta(1) = \gC'\gL^{-1},\quad \hat\Theta(0) = \gC''\gL^{-1}.
\end{align*}
Combining with~\eqref{Lyapunov} we have the result.\hfill\qed

Before going to the proof of Theorem~\ref{thm:varoverlap}, we will prove the following lemma, which essentially solves the continuous Lyapunov equation. Recall that a square matrix $A$ is \emph{stable} if all the eigenvalues of $A$ have a strictly negative real part.

\begin{lem}\label{lem:cle}
	Let $A$ be a $m\times m$ stable matrix, and $C$ be a symmetric matrix. Suppose that the symmetric matrix $X$ satisfies the equation,
	\[
		\sym(AX) = - C.
	\]
	Then we have
	\[
		X=\int_{0}^{\infty}e^{\frac{t}{2}A}Ce^{\frac{t}{2}A^{\T}}dt.
	\]
\end{lem}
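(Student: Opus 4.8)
The plan is to verify directly that the proposed $X$ solves $\sym(AX)=-C$ and then appeal to uniqueness. First I would check convergence of the integral: since $A$ is stable, standard linear algebra (e.g.\ Lyapunov's theorem, or an explicit spectral/Jordan decomposition) gives constants $M\ge 1$ and $\omega>0$ with $\norm{e^{tA/2}}\le M e^{-\omega t}$ for all $t\ge0$, so the integrand is bounded in norm by $M^2\norm{C}e^{-2\omega t}$ and $X:=\int_0^\infty e^{tA/2}Ce^{tA^\T/2}\,dt$ is a well-defined $m\times m$ matrix; it is symmetric because $C$ is symmetric and $(e^{tA/2})^\T=e^{tA^\T/2}$.

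Next I would compute $\sym(AX)=\tfrac12(AX+XA^\T)$. Observe that
\[
	AX+XA^\T=\int_0^\infty\Bigl(A e^{tA/2}Ce^{tA^\T/2}+e^{tA/2}Ce^{tA^\T/2}A^\T\Bigr)dt
	=\int_0^\infty 2\,\frac{d}{dt}\Bigl(e^{tA/2}Ce^{tA^\T/2}\Bigr)dt,
\]
using that $\frac{d}{dt}e^{tA/2}=\tfrac12 A e^{tA/2}=\tfrac12 e^{tA/2}A$ and the product rule. By the fundamental theorem of calculus this equals $2\bigl[e^{tA/2}Ce^{tA^\T/2}\bigr]_0^\infty=2(0-C)=-2C$, where the upper limit vanishes by the exponential bound above. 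Hence $\sym(AX)=-C$, so the proposed matrix is a solution.

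Finally I would address uniqueness: if $X_1,X_2$ both satisfy $\sym(AX)=-C$, then $Y:=X_1-X_2$ is symmetric and satisfies $AY+YA^\T=0$, i.e.\ the homogeneous continuous Lyapunov equation. Applying the same differentiation identity, $\frac{d}{dt}(e^{tA/2}Ye^{tA^\T/2})=\tfrac12 e^{tA/2}(AY+YA^\T)e^{tA^\T/2}=0$, so $e^{tA/2}Ye^{tA^\T/2}$ is constant in $t$; evaluating at $t=0$ gives the constant value $Y$, while letting $t\to\infty$ and using stability forces this constant to be $0$. Thus $Y=0$ and the solution is unique, which also identifies it with the integral formula. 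I do not anticipate a genuine obstacle here; the only point requiring minor care is justifying the exponential decay bound on $\norm{e^{tA/2}}$ and the resulting vanishing of the boundary term at infinity, which is where stability of $A$ is essential.
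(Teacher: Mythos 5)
Your proof is correct, but it takes a genuinely different route from the paper's. The paper vectorizes the Lyapunov equation: it introduces the operator $\phi(A) := \tfrac12(A\otimes I + I\otimes A)$ acting on $\vect(X)$, reduces to the case where $A$ is diagonal (or upper triangular via Jordan form) with eigenvalues in the open left half-plane, observes that $\phi(A)$ is then invertible with $(\phi(A))^{-1} = -\int_0^\infty e^{\frac{t}{2}A}\otimes e^{\frac{t}{2}A}\,dt$, and unwinds the vectorization to recover the integral formula. You instead verify directly that the proposed integral satisfies $\sym(AX)=-C$ via the identity $A\,e^{tA/2}Ce^{tA^\T/2}+e^{tA/2}Ce^{tA^\T/2}A^\T = 2\tfrac{d}{dt}\bigl(e^{tA/2}Ce^{tA^\T/2}\bigr)$ and the fundamental theorem of calculus, then settle uniqueness by showing $e^{tA/2}Ye^{tA^\T/2}$ is constant when $Y$ solves the homogeneous equation, hence equals both $Y$ (at $t=0$) and $0$ (as $t\to\infty$). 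Your argument is more elementary and self-contained: it avoids Kronecker products, avoids the reduction to diagonal/Jordan form, and makes the role of stability transparent (it controls the boundary term at infinity in both existence and uniqueness). The paper's vectorization approach, by contrast, is a standard lens through which continuous Lyapunov equations are treated in the control-theory literature, which fits the authors' remark connecting \eqref{lyap} to control problems. Both proofs are complete and correct; yours is arguably cleaner for the specific statement at hand.
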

\begin{proof}[Proof of Lemma~\ref{lem:cle}]
	First we consider the case when $A$ is stable and similar to a diagonal matrix, \ie\ $A=SDS^{-1}$ for a diagonal matrix $D$ with negative diagonal entries and an invertible matrix $S$. We can write $\sym(AX)= S\sym(DS^{-1} XS^{-\T})S^{\T}$ and solve for $\sym(DY) = - S^{-1}CS^{-\T}$ where $Y=S^{-1} XS^{-\T}$. Furthermore,
	\[
		\int_{0}^{\infty} Se^{\frac{t}{2}D}S^{-1}CS^{-\T}e^{\frac{t}{2}D}S^{\T}dt = \int_{0}^{\infty}e^{\frac{t}{2}A}Ce^{\frac{t}{2}A^{\T}}dt.
	\]
	Thus, w.l.o.g.~$A$ can be taken as a diagonal matrix.

	We write $\vect(A)$ as the $m^{2}\times 1$ vector formed by stacking the columns of $A$, \ie\ $\vect(A) = \sum_{s=1}^{m}e_s\otimes Ae_s=\sum_{s=1}^{m}A^{\T}e_s\otimes e_s$, where $e_s$ is the canonical basis vector with 1 at $s$-th entry and 0 elsewhere, $\otimes$ is the Kronecker product of matrices. We define a linear map $\phi$ from $m \times m$ matrices to $m^2 \times m^2$ matrices:
	\[ \phi(A) := \frac 12(A\otimes I + I\otimes A). \]
	One can easily check that, $\vect(AXB^{\T})=(B\otimes A)\vect(X)$ for any symmetric matrix $X$, and thus we have
	\[ \vect(\sym(AX)) = \phi(A)\vect(X). \]
	From $\sym(AX)=-C$, we get
	\begin{align}\label{eq:3a}
		\phi(A)\vect(X) = -\vect(C).
	\end{align}
	Since $A$ is negative definite diagonal matrix, $\phi(A)$ is also a negative definite diagonal matrix and hence invertible with inverse given by
	\[ (\phi(A))^{-1}=-\int_{0}^{\infty}e^{t\phi(A)}dt=-\int_{0}^{\infty}e^{\frac{t}{2}A}\otimes e^{\frac{t}{2}A}dt. \]
	Thus, we have
	\[
		\vect(X)=-(\phi(A))^{-1}\vect(C) = \int_{0}^{\infty} (e^{\frac{t}{2}A}\otimes e^{\frac{t}{2}A}) \vect(C) dt = \int_{0}^{\infty} \vect(e^{\frac{t}{2}A} Ce^{\frac{t}{2}A} ) dt
	\]
	and
	$$
		X= \int_{0}^{\infty}e^{\frac{t}{2}A} Ce^{\frac{t}{2}A}dt.
	$$
	In the general case, by Jordan decomposition, $A$ is similar to an upper triangular matrix with diagonal entries having negative real parts. The exact proof goes through.
\end{proof}

\subsection{Proof of Theorem~\ref{thm:varoverlap}}
First we note that $0\le \gc_{s}\le 1, -1< \gc_{s}'\le \gc_{s}\le 1$ and thus $\rho(\gC\gD^2\gL)\le \rho(\gD^2\gL), \rho(\gC'\gD^2\gL)\le \rho(\gD^2\gL)$.
If $\gb^{2} <\gb_c^2=\rho(\gD^2\gL)^{-1}$, it is easy to see that the matrices $\gb^2\gC\gD^{2}\gL-I, \gb^2\gC'\gD^{2}\gL-I$ are stable. Furthermore, we have
\begin{align*}
	\hat{U}(2) & = \frac1N\cdot \int_{0}^{\infty} \exp\left(-\frac{t}{2}(I-\gb^2\gC\gD^{2}\gL)\right)\gC\gL^{-1}\exp\left(-\frac{t}{2}(I-\gb^2\gL\gD^{2}\gC)\right)dt + \mfR                \\
	           & = \frac1N\cdot \int_{0}^{\infty} \gC^{\half}\gL^{-\half} \exp\left(-t(I-\gb^2\gC^{\half}\gL^{\half}\gD^{2}\gL^{\half}\gC^{\half})\right)  \gC^{\half}\gL^{-\half}dt + \mfR \\
	           & = \frac1N\cdot \gC^{\half}\gL^{-\half} (I-\gb^2\gC^{\half}\gL^{\half}\gD^{2}\gL^{\half}\gC^{\half})^{-1} \gC^{\half}\gL^{-\half}+ \mfR                                     \\
	           & = \frac1N\cdot \gC (I-\gb^2\gD^{2}\gL\gC)^{-1} \gL^{-1}+ \mfR.
\end{align*}
In the second equality, we used the fact that $ABA^T = B^{1/2}(B^{-1/2}AB^{1/2})(B^{-1/2}AB^{1/2})^T B^{1/2}$ with $A=\exp\left(-\frac{t}{2}(I-\gb^2\gC\gD^{2}\gL)\right)$ and $B:=\Gamma \gL^{-1}$, a diagonal matrix with positive entries.

Similarly, we get
\begin{align*}
	\hat{U}(1)
	 & = \frac1N\cdot \gC' (I-\gb^2\gD^{2}\gL\gC')^{-1} \gL^{-1}+ \mfR
\end{align*}
and
\begin{align}\label{eq:var-overlap-3}
	\hat{U}(0)
	 & =  \int_{0}^{\infty} \exp\left(-\frac{t}{2}(I-\gb^2\gC'\gD^{2}\gL)\right) \sym\bigl( \gb^{2}\gC''\gD^{2}\gL\cdot \hat{U}(1)\bigr) \exp\left(-\frac{t}{2}(I-\gb^2\gL\gD^{2}\gC')\right)dt \\
	 & +\frac1N\cdot \int_{0}^{\infty} \exp\left(-\frac{t}{2}(I-\gb^2\gC'\gD^{2}\gL)\right) \gC''\gL^{-1} \exp\left(-\frac{t}{2}(I-\gb^2\gL\gD^{2}\gC')\right)dt  +\mfR.
\end{align}
Finally, we use the fact that $N\mfR$ converges to $0$ entrywise as $N\to\infty$.\hfill\qed

\section{Uniqueness of $\vq_{\star}$ and Replica Symmetry Breaking }\label{sec:RSB}
In this section, for \pd~$\gD^2$, we prove the AT line condition of the MSK model when $h>0$, beyond which the MSK model is in replica symmetry breaking phase. From classical literature of the SK model, we know that the uniqueness of $\vq_{\star}$ is essential to characterize the AT line condition. In the SK model, the uniqueness was proved using contraction argument for $\gb$ small, and the Latala-Guerra Lemma~\cite{Tal11a}*{Proposition 1.3.8} gives the uniqueness result for $h>0$. In the MSK model, we prove the uniqueness of $\vq_{\star}$ for small $\beta$ by extending the contraction argument, but extending Latala-Guerra Lemma becomes more challenging, which is about analysis of a complicated nonlinear system~\eqref{syseq}. In~\cite{BSS19}, they use an elementary approach to prove this for \pd~$2$-species model, but the idea seems difficult to be generalized. We first prove the uniqueness result for small $\gb$ in Theorem~\ref{thm0}, then we prove Proposition~\ref{prop:indf-uniq} which gives the uniqueness result for \indf~ 2-species models when $h>0$.

\subsection{Proof of Theorem~\ref{thm0}}
We need to prove that the following system
\begin{align}\label{sys1}
	q_s = \E \tanh^2(\gb \eta \sqrt{(\gD^2\gL \vq)_{s}}+h),\qquad s=1,2,\ldots,m
\end{align}
has a unique solution where $\vq=(q_1,q_2,\ldots,q_m)^\T$.
We rewrite the equations in terms of $\vx = (x_1,x_2,\ldots,x_m)^\T:=\gL^{\half} \vq$. Let $f(y) := \tanh^2(y)$. Then the system of equations~\eqref{sys1} is equivalent to
\begin{align}
	x_s = {\psi}_s,\qquad s=1,2,\ldots,m
\end{align}
where
$$
	\psi_s
	:= {\gl^{\half}_s}\E f(\gb \eta \sqrt{(\gD^2\gL^{\half}\vx)_s} + h).
$$
We define
${\Psi}(\vx) = (\psi_1,\psi_{2}, \ldots, \psi_m )$.
It's easy to compute the Jacobian matrix of the map $\Psi$,
\[
	J({\Psi})= \left(\!\!\!\left(\frac{\partial \psi_s}{\partial x_t} \right)\!\!\!\right)_{s,t=1}^{m}
\]
where
$$
	\frac{\partial \psi_s}{\partial x_t}
	=\frac{\gb {\gl^{\half}_s}\gD_{s,t}^2{\gl^{\half}_t}}{2\sqrt{(\gD^2\gL^{\half}\vx)_s}} \E \eta f^{\prime}(\gb \eta \sqrt{(\gD^2\gL^{\half}\vx)_s}+h)
	= \frac{\gb^2}{2} {\gl^{\half}_s} \gD_{st}^2 {\gl^{\half}_t} \E f^{\prime \prime}(\gb \eta \sqrt{(\gD^2\gL^{\half}\vx)_s}+h)
$$
and the last equality follows by Gaussian integration by parts. Now, note that
\[
	f^{\prime}(y) = 2\cdot \frac{\tanh (y)}{\cosh^2 (y)}, \ f^{\prime \prime}(y) = 2\cdot \frac{1- 2 \sinh^2(y)}{\cosh^4(y)} \in [-2,2].
\]
In particular, we have
\begin{align*}
	J({\Psi}) = \gb^2 A \gL^{\half} \gD^2 \gL^{\half}
\end{align*}
where $A=\diag(a_{1},a_{2},\ldots,a_{m})$ and $
	a_s := \frac12\E f^{\prime \prime}(\gb \eta  \sqrt{(\gD^2\gL^{\half}\vx)_s}+h)\in [-1,1]
$
for $s=1,2,\ldots,m$. Thus
\[
	\norm{J(\Psi)} \le \gb^2 \norm{\gL^{\half}\gD^2 \gL^{\half}}= \gb^2 \rho(\gL^{\half}\gD^2 \gL^{\half})=\gb^2 \rho(\gD^2 \gL).
\]
In particular, $\gb^2 < \frac{1}{\rho(\gD^2 \gL)}$ implies that $\norm{J({\Psi})} <1$, \ie\ ${\Psi}$ is a contraction and the system~\eqref{sys1} has a unique solution. \hfill\qed

Next, we present an elementary approach to prove Proposition~\ref{prop:indf-uniq}, which concerns the uniqueness of $\vq_{\star}$ for \indf~$\gD^2$ with $m=2$.
\subsection{Proof of Proposition~\ref{prop:indf-uniq}}

We use an elementary approach to prove Proposition~\ref{prop:indf-uniq}. Recall $\vQ = A \vq$, where $A=\gD^2\gL$. For \indf~$\gD^2$, we have $\det(A) <0$, then
\[A^{-1} =
	\begin{pmatrix}
		-a & b  \\
		c  & -d
	\end{pmatrix} \quad \text{where} \quad a,b,c,d >0.
\]
Then we can rewrite the equation,
\[
	\begin{cases}
		q_1 = \E \tanh^2(\gb \eta \sqrt{Q^1}+h), \\
		q_2 = \E \tanh^2(\gb \eta \sqrt{Q^2}+h).
	\end{cases}
\]
as
\begin{align}\label{eq:trans}
	\begin{cases}
		-aQ^1+bQ^2 = \E \tanh^2(\gb \eta \sqrt{Q^1}+h), \\
		cQ^1-dQ^2 = \E \tanh^2(\gb \eta \sqrt{Q^2}+h).
	\end{cases}
\end{align}
Let $g_1(x) := \frac{1}{b}(\E \tanh^2(\gb \eta \sqrt{x}+h)/x+a)$ and $g_2(x) := \frac{1}{c}(\E \tanh^2(\gb \eta \sqrt{x}+h)/x+d)$, by the classical Latala-Guerra lemma~\cite{Tal11a}*{Appendix A.14}, $g_1(x),g_2(x)$ are both strictly decreasing on $\mathbb{R}^+$. The equation~\eqref{eq:trans} can be rewritten further:
\begin{align}\label{eq:trans2}
	\begin{cases}
		Q^1 = Q^2g_2(Q^2), \\
		Q^2 = Q^1g_1(Q^1).
	\end{cases}
\end{align}
From this, we get
\[
	g_1(Q^1) g_2(Q^2) =1,
\]
then $Q^2=g_2^{-1}\left(\frac{1}{g_1(Q^1)}\right) $ is a decreasing function of $Q^1$. On the other hand, taking cross product of equation~\eqref{eq:trans2}, we have

\begin{align}\label{eq:trans3}
	(Q^2)^2 g_2(Q^2) =( Q^1)^2g_1(Q^1).
\end{align}

Let $h_i(x) = x^2 g_i(x), i=1,2.$ Next we show $h_1,h_2$ are both increasing functions on $\mathbb{R}^+$. Since
\begin{align*}
	h_1(x) = x^2g_1(x) & = \frac{x\E \tanh^2(\gb \eta \sqrt{x}+h)+ax^2}{b}                                                                                    \\
	                   & = \int_{-\infty}^{\infty} \frac{\sqrt{x}}{b} \tanh^2(y) \frac{1}{\sqrt{2\pi \gb^2}} e^{-\frac{(y-h)^2}{2\gb^2x}} dy+ \frac{ax^2}{b},
\end{align*}
where we expand the expectation part as a Gaussian integral. For this integral form, it's easy to check $h_1(x)$ is an strictly increasing function of $x \in \mathbb{R}^+$. Similarly, we can prove $h_2$ is also strictly increasing. From~\eqref{eq:trans3}, we have
\[
	h_1(Q^1) = h_2(Q^2).
\]
Thus $Q_2 = h_2^{-1}\left(h_1(Q^1) \right)$ is strictly increasing as a function of $Q^1$. Recall $Q_2 = g_2^{-1}\left(\frac{1}{g_1(Q^1)}\right)$ is a decreasing function of $Q^1$. Therefore, we conclude the uniqueness of $Q^1$, similarly for $Q^2$. The proof is complete. \hfill \qed


In the following part, we will prove Theorem~\ref{AT line}. As we mentioned before, we assume $\gD^2$ is \pd~because the Parisi formula is only known in that case. We further assume the uniqueness of $\vq_{\star}$ when $h>0$, in order for accurate and correct characterization of AT line condition.

Consider $\vp \in [0,1]^m$ with $p^s\ge q_{\star}^s$ for all $s=1,2,\ldots, m$, define
\begin{align}\label{RSB-V}
	V(\vp) = \frac{\partial \sP_{\text{1RSB}}(\vq_{\star},\vp,\zeta)}{\partial \zeta} \bigg |_{\zeta=1} ,
\end{align}
where $\sP_{\text{1RSB}}$ is Parisi functional of 1-step replica symmetry breaking. The explicit expression and related computations can be found in the appendix of~\cite{BSS19}. The following Lemma gives some useful properties of $V(\vp)$.
\begin{lem}
	[{\cite{BSS19}*{Lemma~3.1}}]
	Fix any $\vq=\vq_{\star} \in \cC(\gb,h)$, The following identities hold:
	\begin{enumerate}
		\item $V(\vq_{\star}) =0$.
		\item $\nabla V(\vq_{\star}) =0$.
		\item $HV(\vq_{\star}) = \gb^2\gL(\gb^2\gD^2\gL\Gamma\gD^2-\gD^2)\gL$, where $\gL, \Gamma$ are diagonal matrices defined in Theorem~\ref{AT line}.
	\end{enumerate}
\end{lem}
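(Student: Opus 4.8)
The plan is to work directly with the explicit $1$-RSB Parisi functional $\sP_{\text{1RSB}}(\vq_\star,\vp,\zeta)$ and differentiate in $\zeta$ at $\zeta=1$, following the scheme of~\cite{BSS19}*{Lemma~3.1} but carrying the bookkeeping through for general $m$. Recall that the $1$-RSB ansatz interpolates between two overlap levels $\vq_\star$ (the inner level, with weight $\zeta$) and $\vp$ (the outer level); the quantities $Q_\ell, Q_\ell^s$ from~\eqref{Q sequen} specialize here to the three values built from $\vq_\star$, $\vp$ and $\vone$. First I would write $V(\vp)$ as a sum over species $s$ of $\gl_s$ times a derivative of the species-wise recursion $X_\ell^s$ evaluated at $\zeta=1$, plus the quadratic term $-\tfrac{\gb^2}{2}(Q-Q')$ coming from the last summand of $\sP$. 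The key analytic input is the standard fact (already used implicitly in the Parisi-formula literature) that $\partial_\zeta\big(\tfrac1\zeta\log\E_{\eta}\exp(\zeta X)\big)\big|_{\zeta=1} = \E\big[e^{X-\log\E e^X}\,(X-\log\E e^X)\big]$, i.e.\ it is the relative-entropy-type expression; at the stationary point this collapses nicely.

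For part (1), $V(\vq_\star)=0$: when $\vp=\vq_\star$ the two overlap levels coincide, so the Gaussian increment attached to the $\zeta$-level has zero variance, $X$ is deterministic given the outer randomness, and the entropy term vanishes identically; the residual quadratic term $-\tfrac{\gb^2}{2}(Q_{\vone}-Q_{\vq_\star})$ minus its counterpart also cancels because $\vp=\vq_\star$ makes the two pieces equal. For part (2), $\nabla V(\vq_\star)=0$: I would differentiate the identity $V(\vp)$ in $p^t$ and evaluate at $\vp=\vq_\star$; the terms linear in the vanishing Gaussian increment vanish, and the surviving first-order terms are exactly the components of $\nabla_{\vq}\sP_{\RS}$ from~\eqref{sys-crit} evaluated at the critical point $\vq_\star\in\cC(\gb,h)$, hence zero. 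Equivalently one can argue that $V(\vp)$ has a double zero at $\vp=\vq_\star$ by a Taylor expansion of the $\zeta$-derivative in the (small) variance of the $\zeta$-level increment, which is $O(|\vp-\vq_\star|)$, combined with the criticality of $\vq_\star$.

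Part (3) is the substantive computation and the main obstacle: one must show $HV(\vq_\star) = \gb^2\gL(\gb^2\gD^2\gL\gC\gD^2-\gD^2)\gL$. The strategy is to expand $V(\vp)$ to second order in $\vp-\vq_\star$. Writing the $\zeta$-level increment as $\gb\,\eta\,(Q^s(\vp)-Q^s(\vq_\star))^{1/2}$ with variance linear in $\vp-\vq_\star$ to leading order, the $\zeta$-derivative of the log-cosh recursion at $\zeta=1$ produces, after Gaussian integration by parts, a term whose Hessian in $\vp$ involves $\E\,\partial^2_y\log\cosh(\gb\eta\sqrt{Q^s_\star}+h)$ — but the fourth-derivative structure of $\log\cosh$ is what generates the $\sech^4$ weights $\gc_s$, i.e.\ the matrix $\gC$. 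Concretely, the chain rule through $Q^s=(\gD^2\gL\vp)_s$ contributes the outer $\gD^2\gL$ and $\gL\gD^2$ factors, the curvature of the recursion contributes the diagonal $\gC$ sandwiched between them (giving the $\gb^4\gL\gD^2\gL\gC\gD^2\gL$ piece), and the plain quadratic term $-\tfrac{\gb^2}{2}\vp^\T\gL\gD^2\gL\vp$-type contributions supply the $-\gb^2\gL\gD^2\gL$ piece. The delicate points are (i) tracking the factor of $\tfrac12$ and the symmetrization correctly, since $\gD^2$ and hence $\gD^2\gL$ need not be symmetric, so one should keep the Hessian as a genuine symmetric $m\times m$ matrix and only at the end recognize the product form; and (ii) justifying that all cross-terms between the $\zeta$-derivative and the $\vp$-differentiation that are \emph{not} captured by this leading-order expansion vanish at $\vp=\vq_\star$ because they carry an extra factor of the (vanishing) increment variance. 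I expect (ii), the careful vanishing-order accounting in the $1$-RSB functional, to be where most of the work lies; the algebra identifying $\E\sech^4 = \gc_s$ is routine given $f''(y)=2\sech^2 y - 4\sech^2 y\tanh^2 y$ and the hyperbolic identities already recorded in the excerpt.
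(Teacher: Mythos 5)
The paper does not actually prove this lemma: it is quoted verbatim from~\cite{BSS19}*{Lemma~3.1} (stated there for $m=2$, asserted here for general $m$), and the paper explicitly defers ``the explicit expression and related computations'' to the appendix of~\cite{BSS19}. So there is no in-paper proof to match against; one must judge your sketch against what a complete argument has to produce. Your overall strategy --- expand $V(\vp)=\partial_\zeta\sP_{\text{1RSB}}(\vq_\star,\vp,\zeta)\big|_{\zeta=1}$ around $\vp=\vq_\star$, use that the $\zeta$-level Gaussian increment has variance proportional to $\vp-\vq_\star$, and invoke the criticality of $\vq_\star$ to kill the linear terms --- is indeed the strategy of the cited proof, and your treatments of (1) and (2) are conceptually right: when $\vp=\vq_\star$ the increment variance vanishes so the recursion is affine in $\zeta$ and $V(\vq_\star)=0$; and at first order the entropy piece produces $\tfrac{\gb^2}{2}\sum_s\gl_s d_s\,\E\tanh^2(\gb\eta\sqrt{(\gD^2\gL\vq_\star)_s}+h)$ with $d_s=(\gD^2\gL(\vp-\vq_\star))_s$, which cancels the linear part of $-\tfrac{\gb^2}{2}(Q_2-Q_1)$ precisely because $q_{\star,s}=\E\tanh^2(\cdots)$.

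Part (3), however, you do not actually carry out --- you flag this yourself --- and the one concrete attribution you do make is wrong. The quadratic contribution to $V(\vp)$ is $-\tfrac{\gb^2}{2}(Q_2-Q_1)=-\tfrac{\gb^2}{4}\bigl(\vp^\T\gL\gD^2\gL\vp-\vq_\star^\T\gL\gD^2\gL\vq_\star\bigr)$, whose Hessian is $-\tfrac{\gb^2}{2}\gL\gD^2\gL$, i.e.\ only \emph{half} of the $-\gb^2\gL\gD^2\gL$ in the claimed formula; your ``$-\tfrac{\gb^2}{2}\vp^\T\gL\gD^2\gL\vp$-type'' term (which would give all of it) has the wrong coefficient. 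The remaining $-\tfrac{\gb^2}{2}\gL\gD^2\gL$ must come out of the second-order expansion of the entropy piece $\sum_s\gl_s\partial_\zeta X_0^s|_{\zeta=1}$ together with the $\gb^4\gL\gD^2\gL\gC\gD^2\gL$ part, and extracting both cleanly requires pushing the cumulant/heat-equation expansion (identities like $F_v=\tfrac{\gb^2}{2}(F_{uu}+F_u^2)$ for $F(u,v)=\log\E_\eta\cosh(y+u+\gb\eta\sqrt v)$, the third cumulant $\kappa_3\approx 6b^2c$, and the $\eta_2^3$ corrections to $X_2^s$) to honest order two. That is exactly the ``careful vanishing-order accounting'' you defer, so the core computation of the lemma remains undone. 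Finally, your worry that ``$\gD^2$ ... need not be symmetric'' is a misreading of the setup: $\gD^2$ \emph{is} symmetric by Assumption~\ref{ass:0}, and $\gL\gD^2\gL\gC\gD^2\gL$ is automatically symmetric because the diagonal matrices $\gL$ and $\gC$ commute, so no extra symmetrization step is needed.
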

The following Corollary characterizes the RSB condition using $HV(\vq_{\star})$ in the above lemma.
\begin{cor}
	[{\cite{BSS19}*{Corollary 3.2}}]\label{RSB-cor}
	Assume the Parisi formula~\eqref{par}, and that $\text{RS}(\gb,h) = \sP_{\RS}(\vq_{\star})$ for some $\vq_{\star} \in \cC(\gb,h)$. If there exists a $\vx \in \bR^m$ with all nonnegative entries such that $\vx^{\T}HV(\vq_{\star})\vx>0$, then
	\[
		\lim_{N \to \infty}F_N < \RS(\gb,h) .
	\]
\end{cor}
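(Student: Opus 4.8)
The plan is to produce, as a competitor in the Parisi variational problem, a one-step replica-symmetry-breaking ansatz whose value is strictly below $\RS(\gb,h)$, obtained by perturbing off the replica-symmetric point $\vq_{\star}$ in the breaking-fraction parameter $\zeta$. Since the Parisi formula~\eqref{par} holds for \pd~$\gD^{2}$, we have $\lim_{N\to\infty}F_{N}=\inf_{\mvgz,\vq}\sP(\mvgz,\vq)$, and every admissible pair of sequences furnishes an upper bound. Taking the $k=1$ sequences with atoms $\vq_{1}=\vq_{\star}$, $\vq_{2}=\vp$ for any $\vp\in[0,1]^{m}$ with $p^{s}\ge q_{\star}^{s}$, and breaking fraction $\zeta\in[0,1]$, the corresponding functional value is exactly $\sP_{\text{1RSB}}(\vq_{\star},\vp,\zeta)$. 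The first fact I would record is that at $\zeta=1$ the underlying Parisi measure collapses to $\delta_{\vq_{\star}}$, so $\sP_{\text{1RSB}}(\vq_{\star},\vp,1)=\sP_{\RS}(\vq_{\star})=\RS(\gb,h)$ for every such $\vp$; this is the standard reduction of the recursive Parisi functional when the top breaking fraction equals $1$, and the explicit computation is in the appendix of~\cite{BSS19}.

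Next I would use the preceding lemma to move in the $\vp$ variable. It gives that $V(\vp)=\partial_{\zeta}\sP_{\text{1RSB}}(\vq_{\star},\vp,\zeta)|_{\zeta=1}$ is twice differentiable near $\vp=\vq_{\star}$ with $V(\vq_{\star})=0$, $\nabla V(\vq_{\star})=0$, and $HV(\vq_{\star})=\gb^{2}\gL(\gb^{2}\gD^{2}\gL\Gamma\gD^{2}-\gD^{2})\gL$. Fix the vector $\vx$ from the hypothesis, so $\vx$ has nonnegative entries and $\vx^{\T}HV(\vq_{\star})\vx>0$. A second-order Taylor expansion along $\vx$ gives $V(\vq_{\star}+t\vx)=\tfrac12 t^{2}\,\vx^{\T}HV(\vq_{\star})\vx+o(t^{2})$ as $t\to0^{+}$, which is strictly positive for all small $t>0$. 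Since $q_{\star}^{s}=\E\tanh^{2}(\gb\eta\sqrt{(\gD^{2}\gL\vq_{\star})_{s}}+h)<1$ strictly for each $s$, the point $\vp:=\vq_{\star}+t\vx$ lies in $[0,1]^{m}$ and satisfies $p^{s}\ge q_{\star}^{s}$ for all small $t>0$, so it is an admissible second atom; fixing such a $t$ we obtain $\vp$ with $V(\vp)>0$.

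Finally I would turn the positivity of this one-sided $\zeta$-derivative into a strict gain. The map $\zeta\mapsto\sP_{\text{1RSB}}(\vq_{\star},\vp,\zeta)$ is $C^{1}$ near $\zeta=1$, takes the value $\RS(\gb,h)$ there, and has derivative $V(\vp)>0$ at $\zeta=1$; hence there is $\zeta_{1}\in(0,1)$ with $\sP_{\text{1RSB}}(\vq_{\star},\vp,\zeta_{1})<\sP_{\text{1RSB}}(\vq_{\star},\vp,1)=\RS(\gb,h)$. Plugging this into the Parisi formula gives $\lim_{N\to\infty}F_{N}\le\sP_{\text{1RSB}}(\vq_{\star},\vp,\zeta_{1})<\RS(\gb,h)$, as claimed. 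The genuinely hard step in this circle of ideas is the Hessian identity for $V$ quoted from the lemma — it is the $\zeta$-derivative of the $1$-RSB functional at the replica-symmetric point, which must be simplified using the fixed-point relations defining $\cC(\gb,h)$ together with Gaussian integration by parts; once that identity is in hand, the only remaining points needing care are the admissibility of the perturbed ansatz (handled by the strict inequality $q_{\star}^{s}<1$) and the elementary passage from a positive one-sided derivative at $\zeta=1$ to a strictly smaller value just below $\zeta=1$.
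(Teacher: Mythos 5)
Your proposal is correct and is the standard Toninelli-type one-step RSB perturbation argument, which is exactly what the cited reference \cite{BSS19} uses; the paper itself simply quotes the result without reproving it. All the delicate points are handled properly: the collapse $\sP_{\text{1RSB}}(\vq_{\star},\vp,1)=\sP_{\RS}(\vq_{\star})$ independent of $\vp$, the strict inequality $q_{\star}^s<1$ ensuring admissibility of $\vp=\vq_{\star}+t\vx$ for small $t>0$, the role of the nonnegativity constraint on $\vx$ in keeping $p^s\ge q_{\star}^s$, the second-order Taylor expansion of $V$ at $\vq_{\star}$ using the two vanishing derivatives from the preceding lemma, and the passage from $V(\vp)>0$ to a strictly smaller value of the Parisi functional at some $\zeta_1<1$, which the Parisi formula then converts into a strict upper bound on the limiting free energy.
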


Now we use the characterization of RSB in Corollary~\ref{RSB-cor} to prove Theorem~\ref{AT line}.
\subsection{Proof of Theorem~\ref{AT line}}
Based on the definition, the matrix $\Gamma^{\half}\gL^{\half}\gD^2\gL^{\half}\Gamma^{\half}$ has all entries positive, by Perron-Frobenius theorem, we know that there exists an eigenvector $\vu=(u_1,u_2,\ldots, u_m) \in \bR^m $ associated to the largest eigenvalue $\rho(\gC\gD^2\gL)>0$ such that $u_s >0 $ for $s=1,2,\ldots,m$. Without loss of generality, we assume $\vu$ is a unit vector, \ie\ $\vu^{\T}\vu=1$. Take $\vx=\gL^{-\half}\Gamma^{\half}\vu$ which has positive entries. Then
\begin{align*}
	\vx^{\T}\left(\gL(\gb^2\gD^2\gL\Gamma\gD^2-\gD^2)\gL\right)\vx & = \vu^{\T}\left[\Gamma^{\half}\gL^{-\half}\gL(\gb^2 \gD^2\gL\Gamma\gD^2-\gD^2)\gL\gL^{-\half}\Gamma^{\half}\right]\vu             \\
	                                                               & = \vu^{\T}\left(\gb^2 [(\gL\Gamma)^{\half}\gD^2(\gL\Gamma)^{\half}]^2\vu-[(\gL\Gamma)^{\half}\gD^2(\gL\Gamma)^{\half}]\vu \right) \\
	                                                               & = \rho(\gC\gD^2\gL)\left({\gb^2\rho(\gC\gD^2 \gL)-1}\right).
\end{align*}
where the last step uses the fact that $\vu$ is an eigenvector associated with $\rho(\gC\gD^2\gL)$ and $\vu^{\T}\vu =1$. If $\gb^2 > \frac{1}{\rho(\gC\gD^2\gL)}$, we have
\[ \vx^{\T}\left(\gL(\gb^2\gD^2\gL\Gamma\gD^2-\gD^2)\gL\right)\vx >0 ,\]
\ie\ $\vx^{\T}HV(\vq_{\star})\vx > 0 $. By Corollary~\ref{RSB-cor}, we have
\[ \lim_{N \to \infty} F_N < \RS(\gb,h) ,\]
and this completes the proof. \hfill\qed

\section{Discussions and Further Questions}\label{sec:openqn}
In this section, we discuss some open questions spread out in the previous sections.

\begin{itemize}
	\item In Theorem~\ref{RS solution}, the RS solution in \indf~$\gD^2$ case, obtained using Guerra's interpolation has the same expression as evaluating the Parisi functional $\sP_{\text{RS}}$ at $\vq_{\star}$, while $\vq_{\star}$ is not a minimizer of $\sP_{\text{RS}}$ any more as in \pd~case. This fact suggests that a modified Parisi formula as conjectured in~\cite{BGG11} should be true at least in the RS regime. Our argument verifies this in part of the RS regime. The question is whether one can prove it in the whole RS regime while adapting Guerra's interpolation?
	\item For \pd~$\gD^2$, when $h>0$, analyze the uniqueness of solution to the nonlinear system:
	      \begin{align*}
		      q_s = \E \tanh^2(\gb \eta \sqrt{(\gD^2\gL\vq)_s+h}, \quad \text{for} \quad s=1,2,\ldots, m.
	      \end{align*}
	      As we mentioned, the uniqueness condition is essential to characterize AT line in Theorem~\ref{AT line}. In~\cite{BSS19}, it was proved in the 2-species case, where one needs to identify the signs of matrix entries, but this is nearly impossible for general $m>2$. Similarly for \indf~case with $m>2$.
	\item In Theorem~\ref{AT line}, we proved the RSB condition when $h>0$. In the case $h=0$, to prove RSB when $\gb>\gb_c$ is still challenging. In that case, $\cC(\gb,0)=\{0,\vq_{\diamond}\}$, $\RS(\gb,0):=\sP_{\RS}(\vq_{\diamond})$ achieves the minimum at $\vq_{\diamond}$, to prove $\lim_{N \to \infty} F_N(\gb)< \RS(\gb,0)$ is still not clear and depends on a good understanding of $\vq_{\diamond}$.

	\item Can one prove or disprove the Conjecture~\ref{AT conj}? In this case, one also needs to understand the solution to~\eqref{syseq} for \indf~$\gD^2$.

	\item In Theorem~\ref{thm:varoverlap}, we proved the asymptotic variance of overlap, then the next question will be a central limit theorem. In the classical SK model, Talagrand~\cite{Tal11a} proved it using the moment method, but adapting the idea for the MSK model seems hopeless because now we have overlap vectors and many computations involve matrices. One has to find a different method to prove it.

	\item Note that, the function
	      \[ \vX(t):=\int_{0}^{-\ln(1-t)} e^{-\frac{s}{2}(I-\gb^2\gC\gD^2\gL)} \gC e^{-\frac{s}{2}(I-\gb^2\gL\gD^2\gC)}ds,\ t\in[0,1] \]
	      is the unique solution of
	      \begin{align*}
		      (1-t)\vX''(t) = \gb^2\cdot \sym(\vX'(0)\gD^2\gL\vX'(t)) \text{ with } \vX(0)=0, \vX'(0) = \gC .
	      \end{align*}
	      when $I-\gb^2\gC\gD^2\gL$ has all eigenvalues strictly positive. Is it possible to connect this equation with Guerra's interpolation?

	\item Finally, continuous Lyapunov equation arises in studying the variance-covariance matrices for OU-type SDE of the form $dX_{t}=AX_{t}dt+CdB_{t}$, where $A$ is a  stable matrix and  $\lim_{t\to\infty}\var(X_{t})=\int_{0}^{\infty}e^{tA}CC^{\T}e^{tA^{\T}}dt$. It will be interesting to connect the appearance of continuous Lyapunov equation in Replica Symmetric MSK model with an appropriate SDE.

	      %
	      %
	      %
\end{itemize}

\noindent{\bf Acknowledgments.}
The authors would like to thank Erik Bates, Jean-Christophe Mourrat, Dimitry Panchenko for reading the manuscript and insightful comments, and the two anonymous referees for providing helpful suggestions and additional references that improved the clarity and presentation of the paper.

\bibliography{RSB.bib}

@article{Pan14,
	Author = {Panchenko, Dmitry},
	Date-Added = {2020-06-09 00:44:51 -0500},
	Date-Modified = {2020-06-09 00:44:51 -0500},
	Doi = {10.1214/12-AOP800},
	Fjournal = {The Annals of Probability},
	Issn = {0091-1798},
	Journal = {Ann. Probab.},
	Mrclass = {60K35 (82B20)},
	Mrnumber = {3189062},
	Mrreviewer = {Pablo A. Ferrari},
	Number = {3},
	Pages = {946--958},
	Title = {The {P}arisi formula for mixed {$p$}-spin models},
	Url = {https://doi-org.proxy2.library.illinois.edu/10.1214/12-AOP800},
	Volume = {42},
	Year = {2014},
	Bdsk-Url-1 = {https://doi-org.proxy2.library.illinois.edu/10.1214/12-AOP800},
	Bdsk-Url-2 = {https://doi.org/10.1214/12-AOP800}}

@book{Pan13,
	Author = {Panchenko, Dmitry},
	Date-Added = {2020-06-09 00:44:03 -0500},
	Date-Modified = {2020-06-09 00:44:03 -0500},
	Doi = {10.1007/978-1-4614-6289-7},
	Isbn = {978-1-4614-6288-0; 978-1-4614-6289-7},
	Mrclass = {82-02 (82B44 82D30)},
	Mrnumber = {3052333},
	Mrreviewer = {Jos\'{e} Trashorras},
	Pages = {xii+156},
	Publisher = {Springer, New York},
	Series = {Springer Monographs in Mathematics},
	Title = {The {S}herrington-{K}irkpatrick model},
	Url = {https://doi-org.proxy2.library.illinois.edu/10.1007/978-1-4614-6289-7},
	Year = {2013},
	Bdsk-Url-1 = {https://doi-org.proxy2.library.illinois.edu/10.1007/978-1-4614-6289-7},
	Bdsk-Url-2 = {https://doi.org/10.1007/978-1-4614-6289-7}}

@article{GT02,
	Author = {Guerra, Francesco and Toninelli, Fabio Lucio},
	Date-Added = {2020-06-09 00:41:06 -0500},
	Date-Modified = {2020-06-09 00:41:06 -0500},
	Doi = {10.1007/s00220-002-0699-y},
	Fjournal = {Communications in Mathematical Physics},
	Issn = {0010-3616},
	Journal = {Comm. Math. Phys.},
	Mrclass = {82B44 (82D30)},
	Mrnumber = {1930572},
	Mrreviewer = {Peter Eichelsbacher},
	Number = {1},
	Pages = {71--79},
	Title = {The thermodynamic limit in mean field spin glass models},
	Url = {https://doi-org.proxy2.library.illinois.edu/10.1007/s00220-002-0699-y},
	Volume = {230},
	Year = {2002},
	Bdsk-Url-1 = {https://doi-org.proxy2.library.illinois.edu/10.1007/s00220-002-0699-y},
	Bdsk-Url-2 = {https://doi.org/10.1007/s00220-002-0699-y}}

@article{BSS19,
	Author = {Bates, Erik and Sloman, Leila and Sohn, Youngtak},
	Date-Added = {2020-05-13 19:38:08 -0500},
	Date-Modified = {2020-05-13 19:38:08 -0500},
	Doi = {10.1007/s10955-018-2197-4},
	Fjournal = {Journal of Statistical Physics},
	Issn = {0022-4715},
	Journal = {J. Stat. Phys.},
	Mrclass = {82B10 (60K35 82B26 82B44)},
	Mrnumber = {3910896},
	Number = {2},
	Pages = {333--350},
	Title = {Replica symmetry breaking in multi-species {S}herrington-{K}irkpatrick model},
	Url = {https://doi-org.proxy2.library.illinois.edu/10.1007/s10955-018-2197-4},
	Volume = {174},
	Year = {2019},
	Bdsk-Url-1 = {https://doi-org.proxy2.library.illinois.edu/10.1007/s10955-018-2197-4},
	Bdsk-Url-2 = {https://doi.org/10.1007/s10955-018-2197-4}}

@article{Pan15,
	Author = {Panchenko, Dmitry},
	Date-Added = {2020-05-13 19:38:00 -0500},
	Date-Modified = {2020-05-13 19:38:00 -0500},
	Doi = {10.1214/14-AOP967},
	Fjournal = {The Annals of Probability},
	Issn = {0091-1798},
	Journal = {Ann. Probab.},
	Mrclass = {60K35 (60G09 82B44)},
	Mrnumber = {3433586},
	Mrreviewer = {Rongfeng Sun},
	Number = {6},
	Pages = {3494--3513},
	Title = {The free energy in a multi-species {S}herrington-{K}irkpatrick model},
	Url = {https://doi-org.proxy2.library.illinois.edu/10.1214/14-AOP967},
	Volume = {43},
	Year = {2015},
	Bdsk-Url-1 = {https://doi-org.proxy2.library.illinois.edu/10.1214/14-AOP967},
	Bdsk-Url-2 = {https://doi.org/10.1214/14-AOP967}}

@article{BCMT15,
	Author = {Barra, Adriano and Contucci, Pierluigi and Mingione, Emanuele and Tantari, Daniele},
	Date-Added = {2020-05-13 19:37:51 -0500},
	Date-Modified = {2020-05-13 19:37:51 -0500},
	Doi = {10.1007/s00023-014-0341-5},
	Fjournal = {Annales Henri Poincar\'{e}. A Journal of Theoretical and Mathematical Physics},
	Issn = {1424-0637},
	Journal = {Ann. Henri Poincar\'{e}},
	Mrclass = {82D30 (80A17)},
	Mrnumber = {3311887},
	Number = {3},
	Pages = {691--708},
	Title = {Multi-species mean field spin glasses. {R}igorous results},
	Url = {https://doi-org.proxy2.library.illinois.edu/10.1007/s00023-014-0341-5},
	Volume = {16},
	Year = {2015},
	Bdsk-Url-1 = {https://doi-org.proxy2.library.illinois.edu/10.1007/s00023-014-0341-5},
	Bdsk-Url-2 = {https://doi.org/10.1007/s00023-014-0341-5}}

@article{Tal06,
	Author = {Talagrand, Michel},
	Date-Added = {2020-05-13 19:37:40 -0500},
	Date-Modified = {2020-05-13 19:37:40 -0500},
	Doi = {10.4007/annals.2006.163.221},
	Fjournal = {Annals of Mathematics. Second Series},
	Issn = {0003-486X},
	Journal = {Ann. of Math. (2)},
	Mrclass = {82B44 (82D30)},
	Mrnumber = {2195134},
	Mrreviewer = {Fran\c{c}ois Germinet},
	Number = {1},
	Pages = {221--263},
	Title = {The {P}arisi formula},
	Url = {https://doi-org.proxy2.library.illinois.edu/10.4007/annals.2006.163.221},
	Volume = {163},
	Year = {2006},
	Bdsk-Url-1 = {https://doi-org.proxy2.library.illinois.edu/10.4007/annals.2006.163.221},
	Bdsk-Url-2 = {https://doi.org/10.4007/annals.2006.163.221}}

@book{Tal11a,
	Author = {Talagrand, Michel},
	Date-Added = {2020-05-13 19:37:29 -0500},
	Date-Modified = {2020-05-13 19:37:29 -0500},
	Doi = {10.1007/978-3-642-15202-3},
	Isbn = {978-3-642-15201-6},
	Mrclass = {82B44 (60K35 60K37 82-02 82C32 82C44 82D30 92B20)},
	Mrnumber = {2731561},
	Mrreviewer = {Francis Comets},
	Note = {Basic examples},
	Pages = {xviii+485},
	Publisher = {Springer-Verlag, Berlin},
	Series = {Ergebnisse der Mathematik und ihrer Grenzgebiete. 3. Folge. A Series of Modern Surveys in Mathematics},
	Title = {Mean field models for spin glasses. {V}olume {I}},
	Url = {https://doi-org.proxy2.library.illinois.edu/10.1007/978-3-642-15202-3},
	Volume = {54},
	Year = {2011},
	Bdsk-Url-1 = {https://doi-org.proxy2.library.illinois.edu/10.1007/978-3-642-15202-3},
	Bdsk-Url-2 = {https://doi.org/10.1007/978-3-642-15202-3}}

@book{Tal11b,
	Author = {Talagrand, Michel},
	Date-Added = {2020-05-13 19:37:04 -0500},
	Date-Modified = {2020-05-13 19:37:04 -0500},
	Isbn = {978-3-642-22252-8; 978-3-642-22253-5},
	Mrclass = {60F10 (60G15 60K35 82B44 82C32 92E10)},
	Mrnumber = {3024566},
	Mrreviewer = {Francis Comets},
	Note = {Advanced replica-symmetry and low temperature},
	Pages = {xii+629},
	Publisher = {Springer, Heidelberg},
	Series = {Ergebnisse der Mathematik und ihrer Grenzgebiete. 3. Folge. A Series of Modern Surveys in Mathematics},
	Title = {Mean field models for spin glasses. {V}olume {II}},
	Volume = {55},
	Year = {2011}}

@article {AC14,
    AUTHOR = {Auffinger, Antonio and Chen, Wei-Kuo},
     TITLE = {Free energy and complexity of spherical bipartite models},
   JOURNAL = {J. Stat. Phys.},
  FJOURNAL = {Journal of Statistical Physics},
    VOLUME = {157},
      YEAR = {2014},
    NUMBER = {1},
     PAGES = {40--59},
      ISSN = {0022-4715},
   MRCLASS = {82D30 (82Bxx)},
  MRNUMBER = {3249903},
MRREVIEWER = {Henryk Arod\'{z}},
       DOI = {10.1007/s10955-014-1073-0},
       URL = {https://doi-org.proxy2.library.illinois.edu/10.1007/s10955-014-1073-0},
}

@article {La02,
    AUTHOR = {Latala, R},
     TITLE = {Exponential inequalities for the SK model of spin glasses, extending Guerras method.},
   JOURNAL = {Unpublished manuscript},
  FJOURNAL = {},
    VOLUME = {},
      YEAR = {2002},
    NUMBER = {},
     PAGES = {},
      ISSN = {},
}

@article {BL17,
    AUTHOR = {Baik, Jinho and Lee, Ji Oon},
     TITLE = {Free energy of bipartite spherical
              {S}herrington--{K}irkpatrick model},
   JOURNAL = {Ann. Inst. Henri Poincar\'{e} Probab. Stat.},
  FJOURNAL = {Annales de l'Institut Henri Poincar\'{e} Probabilit\'{e}s et
              Statistiques},
    VOLUME = {56},
      YEAR = {2020},
    NUMBER = {4},
     PAGES = {2897--2934},
      ISSN = {0246-0203},
   MRCLASS = {82B44 (60B20 60K35)},
  MRNUMBER = {4164860},
       DOI = {10.1214/20-AIHP1062},
       URL = {https://doi-org.proxy2.library.illinois.edu/10.1214/20-AIHP1062},
}

@book {Tal03,
    AUTHOR = {Talagrand, Michel},
     TITLE = {Spin glasses: a challenge for mathematicians},
    SERIES = {Ergebnisse der Mathematik und ihrer Grenzgebiete. 3. Folge. A
              Series of Modern Surveys in Mathematics},
    VOLUME = {46},
      NOTE = {Cavity and mean field models},
 PUBLISHER = {Springer-Verlag, Berlin},
      YEAR = {2003},
     PAGES = {x+586},
      ISBN = {3-540-00356-8},
   MRCLASS = {82B44 (60K35 82-02 82B20 82D30)},
  MRNUMBER = {1993891},
MRREVIEWER = {Anton Bovier},
}

@article {Par83,
    AUTHOR = {Parisi, Giorgio},
     TITLE = {Order parameter for spin-glasses},
   JOURNAL = {Phys. Rev. Lett.},
  FJOURNAL = {Physical Review Letters},
    VOLUME = {50},
      YEAR = {1983},
    NUMBER = {24},
     PAGES = {1946--1948},
      ISSN = {0031-9007},
   MRCLASS = {82A57},
  MRNUMBER = {702601},
       DOI = {10.1103/PhysRevLett.50.1946},
       URL = {https://doi-org.proxy2.library.illinois.edu/10.1103/PhysRevLett.50.1946},
}

@article {Par80,
    AUTHOR = {Parisi, Giorgio},
     TITLE = {A sequence of approximate solutions to the S-K model for spin glasses},
   JOURNAL = {J. Phys. A},
    VOLUME = {13},
      YEAR = {1980},
    NUMBER = {13},
     PAGES = {L-115},
    
}

@article {ALR87,
    AUTHOR = {Aizenman, M. and Lebowitz, J. L. and Ruelle, D.},
     TITLE = {Some rigorous results on the {S}herrington-{K}irkpatrick spin
              glass model},
   JOURNAL = {Comm. Math. Phys.},
  FJOURNAL = {Communications in Mathematical Physics},
    VOLUME = {112},
      YEAR = {1987},
    NUMBER = {1},
     PAGES = {3--20},
      ISSN = {0010-3616},
   MRCLASS = {82A57 (82A05)},
  MRNUMBER = {904135},
MRREVIEWER = {Aernout C. D. van Enter},
       URL = {http://projecteuclid.org.proxy2.library.illinois.edu/euclid.cmp/1104159806},
}

@article {BGGPT14,
    AUTHOR = {Barra, Adriano and Galluzzi, Andrea and Guerra, Francesco and
              Pizzoferrato, Andrea and Tantari, Daniele},
     TITLE = {Mean field bipartite spin models treated with mechanical
              techniques},
   JOURNAL = {Eur. Phys. J. B},
  FJOURNAL = {The European Physical Journal B. Condensed Matter and Complex
              Systems},
    VOLUME = {87},
      YEAR = {2014},
    NUMBER = {3},
     PAGES = {Art. 74, 13},
      ISSN = {1434-6028},
   MRCLASS = {78A48 (82B10)},
  MRNUMBER = {3180909},
       DOI = {10.1140/epjb/e2014-40952-4},
       URL = {https://doi-org.proxy2.library.illinois.edu/10.1140/epjb/e2014-40952-4},
}

@article {BGG11,
    AUTHOR = {Barra, Adriano and Genovese, Giuseppe and Guerra, Francesco},
     TITLE = {Equilibrium statistical mechanics of bipartite spin systems},
   JOURNAL = {J. Phys. A},
  FJOURNAL = {Journal of Physics. A. Mathematical and Theoretical},
    VOLUME = {44},
      YEAR = {2011},
    NUMBER = {24},
     PAGES = {245002, 22},
      ISSN = {1751-8113},
   MRCLASS = {82B20 (82B44)},
  MRNUMBER = {2800855},
MRREVIEWER = {Mei Yin},
       DOI = {10.1088/1751-8113/44/24/245002},
       URL = {https://doi-org.proxy2.library.illinois.edu/10.1088/1751-8113/44/24/245002},
}

@article {Mou20A,
    AUTHOR = {Mourrat, J.-C.},
     TITLE = {Hamilton-{J}acobi equations for finite-rank matrix inference},
   JOURNAL = {Ann. Appl. Probab.},
  FJOURNAL = {The Annals of Applied Probability},
    VOLUME = {30},
      YEAR = {2020},
    NUMBER = {5},
     PAGES = {2234--2260},
      ISSN = {1050-5164},
   MRCLASS = {82B44 (35F21 62H12 82D30)},
  MRNUMBER = {4149527},
       DOI = {10.1214/19-AAP1556},
       URL = {https://doi-org.proxy2.library.illinois.edu/10.1214/19-AAP1556},
}

@ARTICLE{Mou20B,
       author = {{Mourrat}, Jean-Christophe},
        title = "{Free energy upper bound for mean-field vector spin glasses}",
      journal = {arXiv e-prints},
     keywords = {Mathematics - Probability, Condensed Matter - Disordered Systems and Neural Networks, 82B44, 82D30},
         year = 2020,
        month = oct,
          eid = {arXiv:2010.09114},
        pages = {arXiv:2010.09114},
archivePrefix = {arXiv},
       eprint = {2010.09114},
 primaryClass = {math.PR},
       adsurl = {https://ui.adsabs.harvard.edu/abs/2020arXiv201009114M},
      adsnote = {Provided by the SAO/NASA Astrophysics Data System}
}

@ARTICLE{Mou20C,
       author = {{Mourrat}, Jean-Christophe},
        title = "{Nonconvex interactions in mean-field spin glasses}",
      journal = {arXiv e-prints},
     keywords = {Mathematics - Probability, Condensed Matter - Disordered Systems and Neural Networks, 82B44, 82D30},
         year = 2020,
        month = apr,
          eid = {arXiv:2004.01679},
        pages = {arXiv:2004.01679},
archivePrefix = {arXiv},
       eprint = {2004.01679},
 primaryClass = {math.PR},
       adsurl = {https://ui.adsabs.harvard.edu/abs/2020arXiv200401679M},
      adsnote = {Provided by the SAO/NASA Astrophysics Data System}
}

@ARTICLE{Mou19,
       author = {{Mourrat}, Jean-Christophe},
        title = "{Parisi's formula is a Hamilton-Jacobi equation in Wasserstein space}",
      journal = {arXiv e-prints},
     keywords = {Mathematics - Probability, Condensed Matter - Disordered Systems and Neural Networks, 82B44, 82D30},
         year = 2019,
        month = jun,
          eid = {arXiv:1906.08471},
        pages = {arXiv:1906.08471},
archivePrefix = {arXiv},
       eprint = {1906.08471},
 primaryClass = {math.PR},
       adsurl = {https://ui.adsabs.harvard.edu/abs/2019arXiv190608471M},
      adsnote = {Provided by the SAO/NASA Astrophysics Data System}
}

@article {MD20,
    AUTHOR = {Mourrat, Jean-Christophe and Panchenko, Dmitry},
     TITLE = {Extending the {P}arisi formula along a {H}amilton-{J}acobi
              equation},
   JOURNAL = {Electron. J. Probab.},
  FJOURNAL = {Electronic Journal of Probability},
    VOLUME = {25},
      YEAR = {2020},
     PAGES = {Paper No. 23, 17},
   MRCLASS = {82D30 (60G15 82B44)},
  MRNUMBER = {4073684},
       DOI = {10.1214/20-ejp432},
       URL = {https://doi-org.proxy2.library.illinois.edu/10.1214/20-ejp432},
}

@ARTICLE{Mou18,
   author = {{Mourrat}, Jean-Christophe},
    title = "{Hamilton-Jacobi equations for mean-field disordered systems}",
  journal = {arXiv e-prints},
 keywords = {Mathematics - Probability, Condensed Matter - Disordered Systems and Neural Networks, 82B44, 82D30},
     year = 2018,
    month = nov,
      eid = {arXiv:1811.01432},
    pages = {arXiv:1811.01432},
archivePrefix = {arXiv},
   eprint = {1811.01432},
primaryClass = {math.PR},
   adsurl = {https://ui.adsabs.harvard.edu/abs/2018arXiv181101432M},
  adsnote = {Provided by the SAO/NASA Astrophysics Data System}
}

@article {JT17,
    AUTHOR = {Jagannath, Aukosh and Tobasco, Ian},
     TITLE = {Some properties of the phase diagram for mixed {$p$}-spin
              glasses},
   JOURNAL = {Probab. Theory Related Fields},
  FJOURNAL = {Probability Theory and Related Fields},
    VOLUME = {167},
      YEAR = {2017},
    NUMBER = {3-4},
     PAGES = {615--672},
      ISSN = {0178-8051},
   MRCLASS = {60K35 (49K21 49S05 82B44 82D30)},
  MRNUMBER = {3627426},
       DOI = {10.1007/s00440-015-0691-z},
       URL = {https://doi-org.proxy2.library.illinois.edu/10.1007/s00440-015-0691-z},
}

@incollection {Bol19,
    AUTHOR = {Bolthausen, Erwin},
     TITLE = {A {M}orita type proof of the replica-symmetric formula for
              {SK}},
 BOOKTITLE = {Statistical mechanics of classical and disordered systems},
    SERIES = {Springer Proc. Math. Stat.},
    VOLUME = {293},
     PAGES = {63--93},
 PUBLISHER = {Springer, Cham},
      YEAR = {2019},
   MRCLASS = {82D30},
  MRNUMBER = {4015008},
       DOI = {10.1007/978-3-030-29077-1_4},
       URL = {https://doi-org.proxy2.library.illinois.edu/10.1007/978-3-030-29077-1_4},
}

@article {Bol14,
    AUTHOR = {Bolthausen, Erwin},
     TITLE = {An iterative construction of solutions of the {TAP} equations
              for the {S}herrington-{K}irkpatrick model},
   JOURNAL = {Comm. Math. Phys.},
  FJOURNAL = {Communications in Mathematical Physics},
    VOLUME = {325},
      YEAR = {2014},
    NUMBER = {1},
     PAGES = {333--366},
      ISSN = {0010-3616},
   MRCLASS = {60H35 (60H25)},
  MRNUMBER = {3147441},
       DOI = {10.1007/s00220-013-1862-3},
       URL = {https://doi-org.proxy2.library.illinois.edu/10.1007/s00220-013-1862-3},
}

@book{AM07book,
    AUTHOR = {Antsaklis, Panos J. and Michel, Anthony N.},
     TITLE = {A linear systems primer},
 PUBLISHER = {Birkh\"{a}user Boston, Inc., Boston, MA},
      YEAR = {2007},
     PAGES = {xvi+517},
      ISBN = {978-0-8176-4460-4},
   MRCLASS = {93-01 (93-XX)},
  MRNUMBER = {2375027},
MRREVIEWER = {Didier Henrion},
}

@article {Tin05,
    AUTHOR = {Tindel, Samy},
     TITLE = {On the stochastic calculus method for spins systems},
   JOURNAL = {Ann. Probab.},
  FJOURNAL = {The Annals of Probability},
    VOLUME = {33},
      YEAR = {2005},
    NUMBER = {2},
     PAGES = {561--581},
      ISSN = {0091-1798},
   MRCLASS = {60G15 (60H10)},
  MRNUMBER = {2123201},
MRREVIEWER = {Ingo Fahrner},
       DOI = {10.1214/009117904000000919},
       URL = {https://doi-org.proxy2.library.illinois.edu/10.1214/009117904000000919},
}

@article {CN95,
    AUTHOR = {Comets, F. and Neveu, J.},
     TITLE = {The {S}herrington-{K}irkpatrick model of spin glasses and
              stochastic calculus: the high temperature case},
   JOURNAL = {Comm. Math. Phys.},
  FJOURNAL = {Communications in Mathematical Physics},
    VOLUME = {166},
      YEAR = {1995},
    NUMBER = {3},
     PAGES = {549--564},
      ISSN = {0010-3616},
   MRCLASS = {82B44 (60H30 82D30)},
  MRNUMBER = {1312435},
MRREVIEWER = {Boguslaw Zegarlinski},
       URL = {http://projecteuclid.org.proxy2.library.illinois.edu/euclid.cmp/1104271703},
}

@article {GT02b,
    AUTHOR = {Guerra, Francesco and Toninelli, Fabio Lucio},
     TITLE = {Quadratic replica coupling in the {S}herrington-{K}irkpatrick
              mean field spin glass model},
   JOURNAL = {J. Math. Phys.},
  FJOURNAL = {Journal of Mathematical Physics},
    VOLUME = {43},
      YEAR = {2002},
    NUMBER = {7},
     PAGES = {3704--3716},
      ISSN = {0022-2488},
   MRCLASS = {82B44 (82D30)},
  MRNUMBER = {1908695},
MRREVIEWER = {Klaus Ziegler},
       DOI = {10.1063/1.1483378},
       URL = {https://doi-org.proxy2.library.illinois.edu/10.1063/1.1483378},
}

@article {ABCM20,
    AUTHOR = {Alberici, Diego and Barra, Adriano and Contucci, Pierluigi and
              Mingione, Emanuele},
     TITLE = {Annealing and replica-symmetry in deep {B}oltzmann machines},
   JOURNAL = {J. Stat. Phys.},
  FJOURNAL = {Journal of Statistical Physics},
    VOLUME = {180},
      YEAR = {2020},
    NUMBER = {1-6},
     PAGES = {665--677},
      ISSN = {0022-4715},
   MRCLASS = {82D30},
  MRNUMBER = {4131007},
       DOI = {10.1007/s10955-020-02495-2},
       URL = {https://doi-org.proxy2.library.illinois.edu/10.1007/s10955-020-02495-2},
}

@article {ACM21,
    AUTHOR = {Alberici, Diego and Contucci, Pierluigi and Mingione,
              Emanuele},
     TITLE = {Deep {B}oltzmann {M}achines: {R}igorous {R}esults at
              {A}rbitrary {D}epth},
   JOURNAL = {Ann. Henri Poincar\'{e}},
  FJOURNAL = {Annales Henri Poincar\'{e}. A Journal of Theoretical and
              Mathematical Physics},
    VOLUME = {22},
      YEAR = {2021},
    NUMBER = {8},
     PAGES = {2619--2642},
      ISSN = {1424-0637},
   MRCLASS = {82 (60)},
  MRNUMBER = {4289496},
       DOI = {10.1007/s00023-021-01027-2},
       URL = {https://doi-org.proxy2.library.illinois.edu/10.1007/s00023-021-01027-2},
}

@ARTICLE{ACCM20,
       author = {{Alberici}, Diego and {Camilli}, Francesco and {Contucci}, Pierluigi and {Mingione}, Emanuele},
        title = "{The solution of the deep Boltzmann machine on the Nishimori line}",
      journal = {arXiv e-prints},
     keywords = {Mathematical Physics, Condensed Matter - Disordered Systems and Neural Networks, 82-XX, 82D30},
         year = 2020,
        month = dec,
          eid = {arXiv:2012.13987},
        pages = {arXiv:2012.13987},
archivePrefix = {arXiv},
       eprint = {2012.13987},
 primaryClass = {math-ph},
       adsurl = {https://ui.adsabs.harvard.edu/abs/2020arXiv201213987A},
      adsnote = {Provided by the SAO/NASA Astrophysics Data System}
}

@ARTICLE{Gen20,
       author = {{Genovese}, Giuseppe},
        title = "{Minimax formula for the replica symmetric free energy of deep restricted Boltzmann machines}",
      journal = {arXiv e-prints},
     keywords = {Condensed Matter - Disordered Systems and Neural Networks, Computer Science - Machine Learning, Mathematics - Probability},
         year = 2020,
        month = may,
          eid = {arXiv:2005.09424},
        pages = {arXiv:2005.09424},
archivePrefix = {arXiv},
       eprint = {2005.09424},
 primaryClass = {cond-mat.dis-nn},
       adsurl = {https://ui.adsabs.harvard.edu/abs/2020arXiv200509424G},
      adsnote = {Provided by the SAO/NASA Astrophysics Data System}
}

@article {ACCM21,
    AUTHOR = {Alberici, Diego and Camilli, Francesco and Contucci, Pierluigi
              and Mingione, Emanuele},
     TITLE = {The multi-species mean-field spin-glass on the {N}ishimori
              line},
   JOURNAL = {J. Stat. Phys.},
  FJOURNAL = {Journal of Statistical Physics},
    VOLUME = {182},
      YEAR = {2021},
    NUMBER = {1},
     PAGES = {Paper No. 2, 20},
      ISSN = {0022-4715},
   MRCLASS = {82D30},
  MRNUMBER = {4194470},
       DOI = {10.1007/s10955-020-02684-z},
       URL = {https://doi-org.proxy2.library.illinois.edu/10.1007/s10955-020-02684-z},
}

\end{document}